\theoremstyle{plain}
\numberwithin{equation}{section}
\theoremstyle{plain}
\newtheorem{Proposition}[equation]{Proposition}
\newtheorem{Corollary}[equation]{Corollary}
\newtheorem*{Corollary*}{Corollary}
\newtheorem{Theorem}[equation]{Theorem}
\newtheorem*{Theorem*}{Theorem}
\newtheorem{Lemma}[equation]{Lemma}
\theoremstyle{definition}
\newtheorem{Definition}[equation]{Definition}
\newtheorem{Remark}[equation]{Remark}
\def\phi{\varphi}
\renewcommand{\leq}{\leqslant}
\renewcommand{\geq}{\geqslant}
\renewcommand{\subset}{\subseteq}
\renewcommand{\supset}{\supseteq}
\begin{document}



\title{On Interpolation by Functions in $\ell_A^p$}
\author[Cheng]{Raymond Cheng}
\address{Department of Mathematics and Statistics, Old Dominion University, Norfolk, VA 23529, USA. } \email{rcheng@odu.edu}
\author[Felder]{Christopher Felder}
\address{Department of Mathematics, Indiana University, Bloomington, IN 47405, USA. } \email{cfelder@iu.edu}
\subjclass[2020]{Primary 46E15; Secondary 30J99, 30H99.}

\date{\today}

\begin{abstract}
This work explores several aspects of interpolating sequences for $\ell^p_A$, the space of analytic functions on the unit disk with $p$-summable Maclaurin coefficients. Much of this work is communicated through a Carlesonian lens.
 We investigate various analogues of Gramian matrices, for which we show boundedness conditions are necessary and sufficient for interpolation, including a characterization of universal interpolating sequences in terms of Riesz systems. We also discuss weak separation, giving a characterization of such sequences using a generalization of the pseudohyperbolic metric. Lastly, we consider Carleson measures and embeddings. \end{abstract}

\maketitle


\section{Introduction}

Let $\{z_k\}_{k=0}^{\infty}$ be a sequence of distinct points in the open unit disk $\mathbb{D}$ of the complex plane, and let $\{w_k\}_{k=0}^{\infty}$ be a sequence of complex numbers.  An interpolation problem, broadly speaking, is to find an analytic function $f$ on $\mathbb{D}$ such that
\[
        f(z_k) = w_k
\]
for all $k =0, 1, 2,\ldots$.  Furthermore, it is natural to seek a characterization of those sequences $\{z_k\}$ and $\{w_k\}$ for which such a function $f$ always exists, and belongs to a certain class of analytic functions on $\mathbb{D}$.  As explained by Duren \cite[p.~148]{Dur}, intuition suggests that the points $\{z_k\}$  must not lie too ``close together,'' lest a highly oscillatory choice of targets $\{w_k\}$ fail to be interpolated by a function of the prescribed class.  

This notion is illustrated by a theorem of Carleson \cite{Carl}, which characterizes interpolation by functions in $H^{\infty}$.   A sequence $\{z_k\}$ in $\mathbb{D}$ is said to be {\it uniformly separated} if there exists $\delta>0$ such that 
\[
       \prod_{j=0,\,j\neq k}^{\infty} \Big| \frac{z_k-z_j}{1 - \bar{z}_j z_k}\Big| \geq \delta
\]
for all $k = 1, 2, 3,\ldots$.
\begin{Theorem}[Carleson]
   Let $\{z_k\}$ be a sequence of points in $\mathbb{D}$.  Then $\{z_k\}$ has the property that
   for any bounded sequence $\{w_k\}$ of  complex numbers, there exists $f \in H^{\infty}$ such that
   \[
           f(z_k) = w_k
   \]
   for all $k = 1, 2, 3,\ldots$, if and only if $\{z_k\}$ is uniformly separated.
\end{Theorem}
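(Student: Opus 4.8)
The plan is to establish the two implications separately, with the necessity of uniform separation being short and the construction in the sufficiency direction carrying all the content. Throughout I write $\rho(z,w)=|z-w|/|1-\bar w z|$ for the pseudohyperbolic metric and $b_w(z)=\frac{|w|}{w}\,\frac{w-z}{1-\bar w z}$ (with $b_0(z)=z$) for the elementary Blaschke factor, so that $|b_w(z)|=\rho(z,w)$. \textbf{Necessity.} Suppose $\{z_k\}$ is interpolating. The evaluation map $R\colon H^\infty\to\ell^\infty$, $Rf=(f(z_k))_k$, is bounded and, by assumption, onto, so by the open mapping theorem there is a constant $M$ such that every bounded target $(w_k)$ is realized by some $f\in H^\infty$ with $\|f\|_\infty\le M\sup_k|w_k|$. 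Fix $k$ and solve the problem with $w_k=1$ and $w_j=0$ for $j\ne k$; this yields $f\in H^\infty$, $\|f\|_\infty\le M$, vanishing at every $z_j$ with $j\ne k$. For each $N$ the finite Blaschke product $B_{k,N}=\prod_{0\le j\le N,\,j\ne k}b_{z_j}$ divides $f$ in $H^\infty$ without increasing the supremum norm (it is unimodular on $\mathbb T$, and $f$ vanishes on its zero set), so
\[
1=|f(z_k)|=|B_{k,N}(z_k)|\cdot\bigl|(f/B_{k,N})(z_k)\bigr|\le M\!\!\prod_{0\le j\le N,\,j\ne k}\!\!\rho(z_j,z_k).
\]
Letting $N\to\infty$ gives $\prod_{j\ne k}\rho(z_j,z_k)\ge1/M$ for all $k$, which is uniform separation (and forces the Blaschke condition $\sum_k(1-|z_k|)<\infty$ as a byproduct).

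\textbf{Sufficiency.} Now assume $\prod_{j\ne k}\rho(z_j,z_k)\ge\delta>0$ for all $k$. First I extract two preliminary consequences: choosing $j$ nearest to $k$ shows the sequence is separated, $\rho(z_j,z_k)\ge\delta$; and, using $1-\rho(z_j,z_k)^2=\tfrac{(1-|z_j|^2)(1-|z_k|^2)}{|1-\bar z_j z_k|^2}$ together with $1-t\le-\log t$, the product bound yields the uniform energy estimate
\[
\sup_k\sum_{j\ne k}\frac{(1-|z_j|^2)(1-|z_k|^2)}{|1-\bar z_j z_k|^2}\le 2\log(1/\delta).
\]
A standard generational/stopping-time argument over dyadic shells then upgrades this to the statement that $\mu:=\sum_k(1-|z_k|^2)\delta_{z_k}$ is a Carleson measure.

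Given these, I would produce the interpolant by the $\bar\partial$-method. Let $B=\prod_k b_{z_k}$ (convergent, by the Blaschke condition), fix a pseudohyperbolic radius $r$ small enough that the balls $\{\rho(\cdot,z_k)<r\}$ are pairwise disjoint and $|B|\gtrsim_\delta1$ on each ``shell'' $\{r/2\le\rho(\cdot,z_k)\le r\}$ (this uses $|B/b_{z_k}|(z_k)\ge\delta$ and Schwarz--Pick), and take smooth cutoffs $\varphi_k\equiv1$ on $\{\rho(\cdot,z_k)<r/2\}$, supported in $\{\rho(\cdot,z_k)<r\}$, with $|\bar\partial\varphi_k|\lesssim_\delta(1-|z_k|^2)^{-1}$. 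For bounded data $(w_k)$ put $\Phi=\sum_k w_k\varphi_k$, a bounded smooth function with $\Phi\equiv w_k$ near $z_k$, and look for $f=\Phi-Bu$; then $f$ is holomorphic exactly when $\bar\partial u=\bar\partial\Phi/B=:G$, and in that case $f(z_k)=\Phi(z_k)=w_k$ automatically since $B(z_k)=0$. Because $\bar\partial\Phi=\sum_k w_k\bar\partial\varphi_k$ is supported on the shells, where $|B|\gtrsim_\delta1$, a direct computation bounds the mass of $|G(z)|^2(1-|z|^2)\,dA(z)$ on any Carleson box $S$ by a constant times $\|w\|_\infty^2\,\mu(S)$, so this measure is Carleson with constant $\lesssim_\delta\|w\|_\infty^2$. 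Invoking the Carleson-measure refinement of Hörmander's $\bar\partial$-estimate (if $|G|^2(1-|z|^2)\,dA$ is Carleson then $\bar\partial u=G$ has a solution with $\|u\|_\infty$ bounded by the square root of the Carleson constant) produces $u\in L^\infty$ with $\|u\|_\infty\lesssim_\delta\|w\|_\infty$, whence $f=\Phi-Bu\in H^\infty$ with $\|f\|_\infty\lesssim_\delta\|w\|_\infty$ and $f(z_k)=w_k$.

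The necessity half is essentially formal; the content is all in sufficiency, where I see two genuine obstacles. The first is the implication that the product (uniform separation) condition forces $\mu$ to be a Carleson measure: this really uses the full product $\prod_{j\ne k}\rho(z_j,z_k)$ and not mere separation (separated sequences need not generate Carleson measures), and the combinatorial shell-counting behind it is where one must work. The second is the $L^\infty$ control of the $\bar\partial$-solution from a Carleson-measure datum, a genuine strengthening of the $L^2$ theory. I expect this second point to be the crux of a from-scratch proof; Carleson's original block-induction argument and Earl's Blaschke-product/fixed-point argument avoid the $\bar\partial$-machinery but replace it with comparable technical work, and in all routes the Carleson-measure property of $\mu$ is the indispensable input.
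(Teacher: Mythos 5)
A preliminary remark on the comparison you were asked to make: the paper does not prove this statement at all --- it is quoted as Carleson's classical theorem with a citation to \cite{Carl} --- so your proposal can only be judged against the standard literature. Your necessity argument is correct and standard: boundedness plus surjectivity of the restriction map, the open mapping theorem, interpolation of the sequence $\delta_{jk}$, and division by finite Blaschke products give exactly $\prod_{j\neq k}\rho(z_j,z_k)\geq 1/M$. The sufficiency half, however, has a genuine gap at precisely the point you yourself call the crux. The $\bar\partial$-lemma you invoke --- that if $|G|^2(1-|z|^2)\,dA$ is a Carleson measure with constant $N$ then $\bar\partial u=G$ has a solution with $\|u\|_\infty\lesssim N^{1/2}$ --- is not a valid theorem in that generality. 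Bounded solvability forces, via Stokes' theorem, the estimate $\big|\iint_{\mathbb{D}}Gk\,dA\big|\lesssim\|u\|_\infty\|k\|_{H^1}$ for all $k\in H^1$, and Cauchy--Schwarz against the square-Carleson hypothesis leaves the divergent factor $\iint_{\mathbb{D}}|k|(1-|z|)^{-1}dA$; this is exactly why Wolff's corona lemma carries a second hypothesis and an integration by parts (were your lemma true, the corona theorem would follow in one line from the Littlewood--Paley fact that $|f'|^2(1-|z|)\,dA$ is Carleson for $f\in H^\infty$). The square-power hypothesis is known to yield only BMO boundary values in general. The correct off-the-shelf tool is P.~Jones's theorem: if $|G|(1-|z|^2)\,dA$ (first power) is Carleson with constant $N$, there is a solution with $\sup_{\mathbb{D}}|u|\lesssim N$ --- and note you need boundedness on all of $\mathbb{D}$, not just almost-everywhere boundary values, so that $f=\Phi-Bu$ is bounded in the disk. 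Fortunately your data satisfies this stronger hypothesis: on a box of side $H$ one gets $\iint_S|G|(1-|z|)\,dA\lesssim_\delta\|w\|_\infty\sum_{z_k\in S'}(1-|z_k|)^2\lesssim\|w\|_\infty C_\mu H$, so the argument is repairable by citing the right estimate.

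Beyond that, two steps are asserted rather than proved, though both are standard: the implication that uniform separation makes $\mu=\sum_k(1-|z_k|^2)\delta_{z_k}$ a Carleson measure (your energy estimate $\sum_{j\neq k}(1-\rho(z_j,z_k)^2)\leq 2\log(1/\delta)$ is the right starting point, but the shell-counting you defer is genuine work and is the only place the full product condition, rather than mere separation, enters), and the lower bound $|B|\gtrsim_\delta 1$ on the annuli (your Schwarz--Pick sketch does close: $\rho\bigl((B/b_{z_k})(z),(B/b_{z_k})(z_k)\bigr)\leq r$ gives $|(B/b_{z_k})(z)|\geq(\delta-r)/(1-\delta r)$ once $r<\delta$). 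With Jones's lemma substituted for the square-power statement and the Carleson-measure step filled in, your outline becomes a correct modern $\bar\partial$-proof, different in character from Carleson's original argument and from the duality-type arguments that the later sections of this paper adapt to $\ell^p_A$.
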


This was extended to $H^p$ by Shapiro and Shields \cite{SS}.  See the books by Seip \cite{Seip}, and by Agler and McCarthy \cite{AM}, for a modern exposition of interpolation in numerous other spaces of functions.

The present paper is concerned with interpolation by functions in $\ell^p_A$, the space of analytic functions in $\mathbb{D}$ whose Maclaurin coefficients are $p$-summable, with $1<p<\infty$.   Vinogradov \cite{Vino3, Vino4} (for English translations, with Khavin, see \cite{VK1,VK2})  derived exact conditions for interpolation in $\ell^p_A$ provided that the sequence $\{z_k\}$ lies in a Stoltz domain, or that it tends rapidly enough to the boundary.   Our approach connects interpolation in $\ell^p_A$ with associated sequences of functionals, infinite matrices, a nonlinear functional equation, notions of separation, and Carleson measures.
In particular, after providing some background information on the $\ell^p_A$ spaces in the next section, we move to Section \ref{bnd-cond}, which is concerned with various upper and lower bounds that characterize universal interpolating sequences. Section \ref{cont} involves studying limits of truncated interpolation problems to deduce an interpolation result based on the geometry of the Banach space in terms of a minimality condition.  Criteria for interpolation are expressed in terms of matrix conditions in Section \ref{matrix-analysis}.  There arises a pair of nonlinear operators that extend the notion of a Gramian matrix to the case $p\neq 2$.
Section \ref{weak-separation} characterizes sequences which are weakly separated by the multiplier algebra of $\ell^p_A$. We close with a section on Carleson measures for $\ell^p_A$, and a handful of open questions.


\section{Preliminaries}\label{prelim}

For $0 < p \leq \infty$, the space $\ell^p_A$ is defined to be space of analytic functions on the open unit disk $\mathbb{D}$ for which the Maclaurin coefficients are $p$th order summable, i.e.,
\[
\ell^p_A := \left\{ f(z) = \sum_{n\ge0}a_nz^n \in \operatorname{Hol}(\mathbb{D}) :  \sum_{n\ge0}|a_n|^p < \infty \right\}.
\]

This function space is endowed with the norm (or quasinorm, if $0<p\leq 1$) that it inherits from the sequence space $\ell^p$.  Thus let us write 
\[
       \|f\|_p  = \|\{a_k\}_{k=0}^{\infty}\|_{\ell^p}
\]
for any
$
      f(z) = \sum_{k=0}^{\infty} a_k z^k
$
belonging to $\ell^p_A$.  When $p=2$, we recover the classical Hardy space $H^2$ on the disk, however, we emphasize that $\|\cdot\|_p$ refers to the norm on $\ell^p_A$, and not the norm on the Hardy space $H^p$, or some other function space parametrized by $p$.

We limit our attention to the range $1<p<\infty$.  For then $\ell^p_A$ is reflexive, smooth and uniformly convex.  In particular, it enjoys the unique nearest point property, and each nonzero vector has a unique norming functional.

Throughout this paper,  $q$ will be the H\"{o}lder conjugate to $p$, that is, $1/p + 1/q = 1$ holds.  
We recall that for $1 \leq p < \infty$, $p\neq 2$, the dual space of $\ell^p_A$ can be identified with $\ell^{q}_A$, under the pairing
\[
       \langle f, g\rangle = \sum_{k=0}^{\infty} f_k g_k,
\]
where $f(z) = \sum_{k=0}^{\infty} f_k z^k \in \ell^p_A$ and $g(z) = \sum_{k=0}^{\infty} g_k z^k\in \ell^q_A$.

Point evaluation at any $w \in \mathbb{D}$ is a bounded linear functional  on $\ell^p_A$.  It is implemented by the kernel function $\Lambda_w \in \ell^q_A$ given by
\[
        \Lambda_w(z) := \sum_{k=0}^{\infty} w^k z^k, \ \ \ z \in \mathbb{D}.
\]
Indeed, for any $f(z) = \sum_{k=0}^{\infty} a_k z^k \in \ell^p_A$ we have
\[
      f(w) = \langle f, \Lambda_w \rangle = \sum_{k=0}^{\infty} a_k w^k.
\]
The norm of $\Lambda_w$ (as either a vector or a functional) is
\[
     \|\Lambda_w\|_q = \frac{1}{(1 - |w|^q)^{1/q}}.
\]

\bigskip

There is a sensible way to define ``inner function'' in the context of $\ell^p_A$, that employs a notion of orthogonality in general normed linear spaces.  

Let $\mathbf{x}$ and $\mathbf{y}$ be vectors belonging to a normed linear space $\mathscr{X}$.  We say that $\mathbf{x}$ is {\em orthogonal} to $\mathbf{y}$ in the Birkhoff-James sense \cite{AMW, Jam}  if
\begin{equation}\label{2837eiywufh[wpofjk}
      \|  \mathbf{x} + \beta \mathbf{y} \|_{\mathscr{X}} \geq \|\mathbf{x}\|_{\mathscr{X}}
\end{equation}
for all scalars $\beta$, 
and in this case we write $\mathbf{x} \perp_{\mathscr{X}} \mathbf{y}$.
Birkhoff-James orthogonality is one way to extend the concept of orthogonality from an inner product space to normed spaces.   There are other ways to generalize orthogonality, but this approach is particularly useful since it is relates directly to an extremal condition, namely \eqref{2837eiywufh[wpofjk}.

If $\mathscr{X}$ is a Hilbert space, then the usual orthogonality relation $\mathbf{x} \perp \mathbf{y}$ is equivalent to $\mathbf{x} \perp_{\mathscr{X}} \mathbf{y}$.  More generally, however, the relation $\perp_{\mathscr{X}}$ is neither symmetric nor linear.  When $\mathscr{X} = \ell^{p}_A$,  let us write $\perp_p$ instead of  $\perp_{\ell^p_A}$.  
There is a practical criterion for the relation $\perp_p$ when $1<p<\infty$.

\begin{Theorem}[James \cite{Jam}]
Suppose that $1<p<\infty$.  Then for $f(z)= \sum_{k=0}^{\infty} f_k z^k$ and $g(z)= \sum_{k=0}^{\infty} g_k z^k$ belonging to $\ell^p_A$ we have
\begin{equation}\label{BJp}
 {f} \perp_{p} {g}  \iff   \sum_{k=0}^{\infty} |f_k|^{p - 2} \overline{f}_k g_k  = 0,
\end{equation}
where any occurrence of ``$|0|^{p - 2} 0$'' in the right side  is interpreted as zero.
\end{Theorem}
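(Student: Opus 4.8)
The plan is to read Birkhoff--James orthogonality as an extremal condition and differentiate. By definition, $f \perp_p g$ means exactly that $\beta = 0$ is a global minimizer of $\beta \mapsto \|f + \beta g\|_p$ over $\beta \in \mathbb{C}$. Since $t \mapsto t^p$ is strictly increasing on $[0,\infty)$, this is equivalent to $\beta = 0$ being a global minimizer of
\[
\phi(\beta) := \|f + \beta g\|_p^p = \sum_{k=0}^{\infty} |f_k + \beta g_k|^p .
\]
Here $\phi$ is a finite-valued convex function on $\mathbb{C} \cong \mathbb{R}^2$: each summand is the composition of the affine map $\beta \mapsto f_k + \beta g_k$ with the convex function $w \mapsto |w|^p$, and $\phi(\beta) \le (\|f\|_p + |\beta|\,\|g\|_p)^p < \infty$.

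For a convex function a point is a global minimum iff it is a critical point, so I would reduce to a first-order condition. Fix $\lambda \in \mathbb{C}$ and look at the convex function $r \mapsto \phi(r\lambda)$ of the real variable $r$. Differentiating the $k$-th term at $r = 0$ gives $p\,|f_k|^{p-2}\operatorname{Re}(\bar f_k \lambda g_k)$, with the convention $|0|^{p-2}0 = 0$; note this is where $p > 1$ is used, since then $|f_k|^{p-2}|\bar f_k g_k| \le |f_k|^{p-1}|g_k| \to 0$. To differentiate the series termwise I would estimate the difference quotients by the mean value theorem, bounding the intermediate-point derivative by $p\,|w|^{p-1} \le C\big(|f_k|^{p-1} + |g_k|^{p-1}\big)$ for $r$ near $0$, and then apply Young's inequality, $|f_k|^{p-1}|g_k| \le \tfrac1q|f_k|^p + \tfrac1p|g_k|^p$, to obtain an $\ell^1$ majorant; dominated convergence then yields
\[
\left.\frac{d}{dr}\right|_{r=0}\phi(r\lambda) = p\,\operatorname{Re}\!\left(\lambda \sum_{k=0}^{\infty} |f_k|^{p-2}\bar f_k\, g_k\right),
\]
the series on the right converging absolutely for the same reason (the sequence $\{|f_k|^{p-2}\bar f_k\}$ lies in $\ell^q$).

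Finally, $\beta = 0$ minimizes $\phi$ iff $r = 0$ minimizes $r \mapsto \phi(r\lambda)$ for every $\lambda \in \mathbb{C}$, and by convexity plus differentiability this holds iff the derivative above vanishes for every $\lambda$, i.e. $\operatorname{Re}(\lambda S) = 0$ for all $\lambda$, where $S := \sum_k |f_k|^{p-2}\bar f_k g_k$. Taking $\lambda = 1$ and $\lambda = i$ forces $S = 0$; conversely $S = 0$ makes every such derivative vanish, so $0$ is a critical point of each convex slice and hence a global minimum of $\phi$. This is precisely \eqref{BJp}. (Alternatively, one can invoke the general fact that in a smooth Banach space $x \perp_{BJ} y$ iff the unique norming functional of $x$ annihilates $y$, and identify that functional on $\ell^p_A$ with the element $\sum_k |f_k|^{p-2}\bar f_k z^k \in \ell^q_A$, but pinning down the norming functional costs the same computation.) The one genuinely delicate step is the termwise differentiation of the series at indices where $f_k = 0$, and this is exactly the place where the hypothesis $1 < p < \infty$ enters; the rest is convexity bookkeeping.
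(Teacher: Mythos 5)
Your argument is correct: the reduction of $f \perp_p g$ to the vanishing of the one-sided derivatives of the convex function $\beta \mapsto \|f+\beta g\|_p^p$ along real slices, the dominated-convergence justification of termwise differentiation (with the Young/H\"older majorant handling the indices where $f_k=0$, which is indeed where $1<p<\infty$ is needed), and the final $\lambda = 1, i$ step are all sound. Note that the paper itself offers no proof of this statement — it is quoted directly from James's work — so there is nothing to diverge from; your computation is the standard one, and it is exactly the calculation that underlies the paper's subsequent identification of $f^{\langle p-1\rangle}/\|f\|_p^{p-1}$ as the unique norming functional of $f$, i.e.\ the equivalence \eqref{ppsduwebxzrweq5}, which is the alternative route you mention at the end.
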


In light of \eqref{BJp} we define, for a complex number $\alpha=re^{i\theta}$, and any $s > 0$, the quantity
\begin{equation}\label{definition-de-zs}
\alpha^{\langle s \rangle} = (re^{i\theta})^{\langle s \rangle} := r^{s} e^{-i\theta}.
\end{equation}
It is readily seen that for any complex numbers $\alpha$ and $\beta$,  exponent $s>0$, and integer $n \geq 0$, we have
\begin{align*}
(\alpha\beta)^{\langle s\rangle} &= \alpha^{\langle s\rangle} \beta^{\langle s\rangle}\\
|\alpha^{\langle s \rangle}| &= |\alpha|^s\\
\alpha^{\langle s \rangle}\alpha &= |\alpha|^{s+1}\\
(\alpha^{\langle s \rangle})^n &= (\alpha^n)^{\langle s \rangle}\\
(\alpha^{\langle p-1 \rangle})^{\langle q-1 \rangle} &= \alpha.
\end{align*}

Further to the notation \eqref{definition-de-zs}, for $f(z) = \sum_{k=0}^{\infty} f_k z^k$, let us write
\begin{equation}\label{uewuweuer}
   f^{\langle s \rangle}(z)  := \sum_{k=0}^{\infty}  f_k^{\langle s \rangle} z^k
\end{equation}
for any $s>0$.  Similarly, for any matrix or vector with complex-valued entries $A = [a_{j,k}]$, we take $A^{\langle s\rangle}$ to mean the matrix $[a_{j,k}^{\langle s\rangle}]$.
By comparing with the case  $p=2$, we can think of taking the  ${\langle s \rangle}$ power as generalizing complex conjugation.

If $f \in \ell^{p}_A$, it is easy to verify that $f^{\langle p - 1\rangle} \in \ell^{q}_A$.   Thus from  \eqref{BJp} we get
\begin{equation}\label{ppsduwebxzrweq5}
f \perp_{p} g \iff \langle g, f^{\langle p - 1\rangle} \rangle = 0.
\end{equation}
Consequently the relation $\perp_{p}$ is linear in its second argument, when $1<p<\infty$, and it then makes sense to speak of a vector being orthogonal to a subspace of $\ell^{p}_A$.  In particular, if $f \perp_p g$ for all $g$ belonging to a subspace $\mathscr{M}$ of $\ell^p_A$, then
\[
       \| f + g \|_p  \geq  \|f\|_p
\]
for all $g \in \mathscr{M}$.  That is, $f$ solves a nearest-point problem in relation to the subspace $\mathscr{M}$.
Direct calculation will also confirm that
\[
     \langle f, f^{\langle p-1\rangle} \rangle = \|f\|^p_p,
\]
and hence $f^{\langle p-1\rangle}/ \|f\|^{p-1}_p \in \ell^q_A$ is the norming functional of $f \in \ell^p_A \setminus \{0\}$ (smoothness ensures that the norming functional is unique).
With this concept of orthogonality established, we may now introduce a definition of inner function that is particular to $\ell^p_A$.  
\begin{Definition}
   Let $1<p<\infty$.  A function $f \in \ell^p_A$ is said to be $p$-inner if it is not identically zero and it satisfies
   \[
          f(z)\ \, \perp_p\ \, z^k f(z)
   \]
   for all positive integers $k$.
\end{Definition}

That is, $f$ is $p$-inner if it is nontrivially orthogonal to all of its forward shifts.  Apart from a  multiplicative constant, this definition, originating from \cite{CMR2}, is equivalent to the traditional meaning of ``inner'' when $p=2$.  Furthermore, this approach to defining an inner property is consistent with that taken in other function spaces \cite{MR1440934, Dima,CMR3,MR1278431,MR1398090,MR1197044,MR936999,Seco}.

Birkhoff-James Orthogonality also plays a role when we utilize a version of the Pythagorean theorem for $\ell^p_A$.
 It takes the form of a family of inequalities relating the lengths of orthogonal vectors with that of their sum \cite[Corollary 3.4]{CR}.

\begin{Theorem}\label{MorePythProp}
Let $1 < r < \infty$, and $1 < p < \infty$.  
   If $1 < p \leq 2 \leq r < \infty$ or $2 \leq p \leq r < \infty$, then there exists $K>0$ such that
  \begin{equation}\label{lowerpyth}   \| f \|^r_p  +  K\|g\|^r_p \leq \|f + g \|^r_p  \end{equation}
  whenever $f \perp_p g$ in $\ell^p_A$.
  
  If $1 < r \leq p \leq 2 $ or $1 < r \leq 2 \leq p < \infty$, then there exists $K>0$ such that
  \begin{equation}   \| f \|^r_p  +  K\|g\|^r_p \geq \|f + g \|^r_p  \end{equation}
  whenever $x \perp_p g$ in $\ell^p_A$.
\end{Theorem}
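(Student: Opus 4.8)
The plan is to obtain both families of inequalities from the classical Clarkson inequalities for $\ell^p$ (equivalently, from the uniform convexity and uniform smoothness of $L^p$), together with the single fact about Birkhoff--James orthogonality that is actually needed: by the defining property of $\perp_p$, if $f\perp_p g$ then $\|f+\beta g\|_p\geq\|f\|_p$ for \emph{every} scalar $\beta$, so in particular $\|2f+g\|_p=2\|f+\tfrac12 g\|_p\geq 2\|f\|_p$ and $\|f-g\|_p\geq\|f\|_p$. Since the map sending an element of $\ell^p_A$ to its sequence of Maclaurin coefficients embeds $\ell^p_A$ isometrically as a closed subspace of $\ell^p$, one may argue entirely inside $\ell^p$.

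Set $r_0:=\max\{2,p\}$ and $r_1:=\min\{2,p\}$. The analytic input is that there are constants $c_p,C_p>0$ with
\[
\|u+v\|_p^{r_0}+c_p\|u-v\|_p^{r_0}\leq 2^{r_0-1}\big(\|u\|_p^{r_0}+\|v\|_p^{r_0}\big), \qquad \|u+v\|_p^{r_1}+\|u-v\|_p^{r_1}\leq 2\|u\|_p^{r_1}+C_p\|v\|_p^{r_1}
\]
for all $u,v\in\ell^p$. When $p\geq 2$ the first is Clarkson's inequality (valid even coordinatewise, with $c_p=1$ and $r_0=p$) and the second encodes the $2$-uniform smoothness of $L^p$ (with $r_1=2$); when $1<p\leq 2$ the first is the $2$-uniform convexity of $L^p$ (with $r_0=2$) and the second is the coordinatewise sum of the elementary scalar inequality $|a+b|^p+|a-b|^p\leq 2(|a|^p+|b|^p)$ (with $r_1=p$ and $C_p=2$). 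I would quote all four of these as standard.

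For the lower inequality \eqref{lowerpyth} --- the range $r\geq r_0$, which is exactly the hypothesis $1<p\leq 2\leq r$ or $2\leq p\leq r$ --- I would apply the first displayed inequality with $u=f$ and $v=f+g$, so that $u+v=2f+g$ and $u-v=-g$. The bound $\|2f+g\|_p\geq 2\|f\|_p$ then gives $2^{r_0}\|f\|_p^{r_0}+c_p\|g\|_p^{r_0}\leq 2^{r_0-1}\|f\|_p^{r_0}+2^{r_0-1}\|f+g\|_p^{r_0}$, whence $\|f+g\|_p^{r_0}\geq\|f\|_p^{r_0}+2^{1-r_0}c_p\|g\|_p^{r_0}$. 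Raising to the power $r/r_0\geq 1$ and using the superadditivity $(a+b)^t\geq a^t+b^t$ for $t\geq 1$ and $a,b\geq 0$ produces \eqref{lowerpyth} with $K=(2^{1-r_0}c_p)^{r/r_0}$. For the upper inequality --- the range $r\leq r_1$ --- I would instead apply the second displayed inequality with $u=f$ and $v=g$ and use $\|f-g\|_p\geq\|f\|_p$ to get $\|f+g\|_p^{r_1}+\|f\|_p^{r_1}\leq 2\|f\|_p^{r_1}+C_p\|g\|_p^{r_1}$, i.e.\ $\|f+g\|_p^{r_1}\leq\|f\|_p^{r_1}+C_p\|g\|_p^{r_1}$; raising to the power $r/r_1\leq 1$ and using the subadditivity $(a+b)^t\leq a^t+b^t$ for $0<t\leq 1$ then gives the claim with $K=C_p^{r/r_1}$.

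The one genuinely delicate point --- the step I expect to be the main obstacle --- is selecting the correct Clarkson-type estimate in each of the four parameter sub-cases: one must use the exponent-$p$ inequality precisely when $p$ lies on the favourable side of $2$ (uniformly convex when $p\geq 2$, uniformly smooth when $p\leq 2$), and the exponent-$2$ uniform convexity/smoothness of $L^p$ when $p$ lies on the other side. It is this dichotomy that makes the thresholds $\max\{2,p\}$ and $\min\{2,p\}$ arise, and it is why the hypotheses on $r$ are exactly what the final power trick needs. A last bookkeeping check: in the degenerate case $f=0$ the lower inequality forces $K\leq 1$, which is automatic, since setting $v=0$ in the first displayed inequality yields $c_p\leq 2^{r_0-1}-1$ and hence $2^{1-r_0}c_p<1$; likewise setting $u=0$ in the second forces $C_p\geq 2$, covering the degenerate case of the upper inequality.
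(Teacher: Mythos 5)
Your proof is correct. For comparison: the paper itself gives no argument for Theorem \ref{MorePythProp} --- it is imported from \cite[Corollary 3.4]{CR}, where the Pythagorean inequalities are deduced from the weak parallelogram laws satisfied by $L^p$ --- and your route is essentially a compressed, self-contained version of that argument. Your two displayed estimates (coordinatewise Clarkson for $p\geq 2$; the two-point $2$-uniform convexity and $2$-uniform smoothness inequalities of Ball--Carlen--Lieb, with $c_p=p-1$ and $C_p=2(p-1)$; and the elementary scalar inequality for $1<p\leq 2$) are exactly lower and upper weak parallelogram laws at the endpoint exponents $r_0=\max\{2,p\}$ and $r_1=\min\{2,p\}$, and the substitutions $u=f$, $v=f+g$ (with $\|2f+g\|_p\geq 2\|f\|_p$) and $u=f$, $v=g$ (with $\|f-g\|_p\geq\|f\|_p$) are precisely how Birkhoff--James orthogonality is exploited in that framework. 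Your final upgrade from the exponents $r_0,r_1$ to the full stated range of $r$ via monotonicity together with superadditivity of $t\mapsto t^{r/r_0}$ (for $r\geq r_0$) and subadditivity of $t\mapsto t^{r/r_1}$ (for $r\leq r_1$) is valid and accounts exactly for the hypotheses $r\geq\max\{2,p\}$, respectively $r\leq\min\{2,p\}$, in the statement; whether one instead verifies the parallelogram laws for every admissible $r$ directly, as in the cited source, is a matter of taste. Your matching of the four ingredient inequalities to the four parameter regimes is the correct one, and the closing consistency checks ($c_p\leq 2^{r_0-1}-1$, $C_p\geq 2$) are fine, so there is no gap.
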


Actually, this theorem holds with any Lebesgue space $L^p$ in place of $\ell^p_A$.
When $p=2$, the parameters are $K=1$ and $r=2$, and the Pythagorean inequalities reduce to the familiar Pythagorean theorem for a Hilbert space.  More generally, these Pythagorean inequalities enable the application of some Hilbert space methods and techniques to smooth Banach spaces satisfying the weak parallelogram laws; see, for example, \cite[Proposition 4.8.1 and Proposition 4.8.3; Theorem 8.8.1]{CMR}.

For further background on the function space $\ell^p_A$, we refer to the book \cite{CMR}.


\section{Boundedness Conditions}\label{bnd-cond}

Much of the modern treatment of interpolating sequences has been connected to the boundedness of Gramian matrices.  For example, in $H^\infty$, Carleson's theorem tells us that a sequence $\{z_k\} \subset \mathbb{D}$ is interpolating if and only if the corresponding Gramian $G$, given by
\[
G_{i,j} = \langle s_i, s_j \rangle_2,
\]
where $s_i$ is the normalized Szeg\H{o} kernel at $z_i$, is both bounded and bounded below (see \cite[Chapter~9]{AM} for a more general view). However, the boundedness of $G$ is equivalent to the existence of a positive constant $M$ such that
\[
     \Big\|\sum_i a_i s_i \Big\|^2_2 \le M \sum_{i}|a_i|^2,
\]
for any sequence $\{a_i\}$ of scalars. When $p\neq 2$, the conditions in the following propositions, using H\"{o}lder duality, should be read as replacements for this phenomenon. We focus on a direct matrix interpretation of this in Section \ref{matrix-analysis}.

Let us begin by interpolating a fixed finite target when $1<p<\infty$, $p\neq 2$, and letting $1/p + 1/q = 1$. Fixing a positive integer $N$, distinct points $z_0, z_1, z_2,\ldots, z_N \in \mathbb{D}$ and points $w_0, w_1, w_2,\ldots, w_N \in \overline{\mathbb{D}}$, we should first convince ourselves that there exists $h \in \ell^p_A$ such that $h(z_k) = w_k$ for all $k$, $0 \leq k \leq N$.   If we let $B_k$ be the finite Blaschke product with zeros at $\{z_0, z_1, z_2,\ldots, z_N\} \setminus \{z_k\}$, then the function
\begin{equation}  \label{aitchast}
     h_{\ast}(z) = w_0 B_0(z)/B_0(z_0)+ w_1 B_1(z)/B_1(z_1)+ \cdots + w_N B_N(z)/B_N(z_N)
\end{equation}
does the job.  Of course we would also want norm control over such a function.  To obtain it, the following observation is useful.

\begin{Lemma}\label{lookoverthere}
  Let $1<p<\infty$ and $1/p + 1/q = 1$.  Let $\{z_0, z_1,\ldots, z_N\}$ be distinct points in $\mathbb{D}$, and let $\mathscr{J}$ be the subspace of $\ell^p_A$ consisting of functions vanishing on $\{z_0, z_1,\ldots, z_N\}$.  Then 
  \[
        \mathscr{J}^{\perp} =  \bigvee_{\ell^q_A} \{\Lambda_{z_0}, \Lambda_{z_1},\ldots, \Lambda_{z_N}\}.
  \]
\end{Lemma}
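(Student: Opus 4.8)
The plan is to identify $\mathscr{J}^\perp$ using Birkhoff–James orthogonality as described by \eqref{ppsduwebxzrweq5}: a functional $\Phi \in \ell^q_A$ (identified with a vector via the pairing $\langle \cdot, \cdot \rangle$) annihilates $\mathscr{J}$ precisely when it lies in $\mathscr{J}^\perp$, and the point evaluation functionals $\Lambda_{z_0}, \ldots, \Lambda_{z_N}$ clearly all annihilate $\mathscr{J}$. So the containment $\bigvee_{\ell^q_A}\{\Lambda_{z_0},\ldots,\Lambda_{z_N}\} \subseteq \mathscr{J}^\perp$ is immediate, since $\mathscr{J}^\perp$ is a (weak-$*$, hence norm) closed subspace of $\ell^q_A$ and contains each generator.

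For the reverse inclusion, I would argue by a dimension/codimension count. First I would observe that $\mathscr{J}$ has finite codimension $N+1$ in $\ell^p_A$: indeed $\mathscr{J}$ is the kernel of the bounded linear map $E\colon \ell^p_A \to \mathbb{C}^{N+1}$, $E(f) = (f(z_0),\ldots,f(z_N))$, and $E$ is surjective because the function $h_\ast$ in \eqref{aitchast} shows every target is attained (with the $z_k$ distinct, the matrix $[z_k^j]$ or equivalently the Blaschke construction gives surjectivity). Hence $\dim \mathscr{J}^\perp = N+1$ as well, using the standard fact that for a closed subspace $\mathscr{M}$ of a reflexive Banach space, $\operatorname{codim} \mathscr{M} = \dim \mathscr{M}^\perp$. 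On the other hand, $\{\Lambda_{z_0},\ldots,\Lambda_{z_N}\}$ is linearly independent in $\ell^q_A$: a nontrivial linear relation $\sum_k c_k \Lambda_{z_k} = 0$ would force $\sum_k c_k f(z_k) = 0$ for all $f \in \ell^p_A$, contradicting surjectivity of $E$ (equivalently, evaluate against monomials $z^j$, $0 \le j \le N$, to get a Vandermonde system forcing all $c_k = 0$). Therefore $\bigvee_{\ell^q_A}\{\Lambda_{z_0},\ldots,\Lambda_{z_N}\}$ is an $(N+1)$-dimensional subspace of the $(N+1)$-dimensional space $\mathscr{J}^\perp$, so the two coincide.

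The main point requiring care is the annihilator duality: I want to be sure the identification of $(\ell^p_A)^*$ with $\ell^q_A$ via the stated pairing really does convert ``$\Phi \perp_p$-orthogonal to $\mathscr{J}$'' into ``$\Phi \in \mathscr{J}^\perp$ in the usual functional-analytic sense.'' This is exactly the content of \eqref{ppsduwebxzrweq5} combined with linearity of $\perp_p$ in the second slot — a vector $f$ satisfies $f \perp_p \mathscr{J}$ iff $f^{\langle p-1\rangle} \in \ell^q_A$ annihilates $\mathscr{J}$ — but since the lemma is stated in terms of $\mathscr{J}^\perp$ (the pre-annihilator / annihilator) rather than in terms of the relation $\perp_p$, the cleanest route is simply to work with annihilators directly and invoke reflexivity of $\ell^p_A$ for the codimension equality; no appeal to $\perp_p$ is strictly needed. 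The only other mild obstacle is confirming surjectivity of $E$, but \eqref{aitchast} already records the interpolating function, so this is in hand.
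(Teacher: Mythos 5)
Your proof is correct, but it takes a different route from the paper. The paper handles the reverse inclusion by contradiction using Birkhoff--James orthogonality: given $g \in \mathscr{J}^\perp$ outside the span of the kernels, it reduces (via metric projection onto the finite-dimensional span) to the case $g \perp_q \bigvee\{\Lambda_{z_k}\}$, applies \eqref{ppsduwebxzrweq5} to conclude that $g^{\langle q-1\rangle} \in \ell^p_A$ vanishes at every $z_k$ and hence lies in $\mathscr{J}$, and then the pairing $\langle g, g^{\langle q-1\rangle}\rangle = \|g\|_q^q = 0$ forces $g = 0$. You instead count dimensions: surjectivity of the evaluation map $E$ (witnessed by $h_\ast$ from \eqref{aitchast}) gives $\operatorname{codim}\mathscr{J} = N+1$, the identification $\mathscr{J}^\perp \cong (\ell^p_A/\mathscr{J})^*$ gives $\dim\mathscr{J}^\perp = N+1$ (reflexivity, which you invoke, is not actually needed for this), and a Vandermonde argument gives linear independence of the $\Lambda_{z_k}$, so the $(N+1)$-dimensional span must be all of $\mathscr{J}^\perp$. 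Your argument is more elementary and avoids the $\langle\cdot\rangle$-calculus, smoothness, and the slightly glossed-over reduction ``we may assume $g \perp_q$ the span'' in the paper; on the other hand, the paper's orthogonality argument is the one that meshes with the machinery used throughout (norming functionals, metric projections) and is the style of reasoning that survives when the node set is infinite and the annihilator is no longer finite-dimensional, whereas your dimension count is intrinsically tied to the finite case. Both proofs are complete for the statement as given.
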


\begin{proof} Trivially it holds that
  \[
        \mathscr{J}^{\perp} \supset  \bigvee_{\ell^q_A} \{\Lambda_{z_0}, \Lambda_{z_1},\ldots, \Lambda_{z_N}\}.
  \]
Suppose that $g$ belongs to $\mathscr{J}^{\perp}$, but fails to belong to the subspace $\bigvee_{\ell^q_A} \{\Lambda_{z_0}, \Lambda_{z_1},\ldots, \Lambda_{z_N}\}$; in fact, we may assume that 
 \[
       g \perp_q  \bigvee_{\ell^q_A} \{\Lambda_{z_0}, \Lambda_{z_1},\ldots, \Lambda_{z_N}\}.
  \]
  
  Then, according to \eqref{ppsduwebxzrweq5}, $g^{\langle q-1 \rangle} \in \ell^p_A$ satisfies
  \[
        \langle \Lambda_{z_k}, g^{\langle q-1 \rangle} \rangle = 0
  \]
  for each $k$, $0\leq k\leq N$, which is to say that $g^{\langle q-1 \rangle}$ resides in $\mathscr{J}$.  Finally, this forces
  \[
        \langle g, g^{\langle q-1 \rangle} \rangle = \|g\|_q^q = 0,
  \]
  or $g=0$.
\end{proof}

With that, we have the following interpolation condition for a finite set.

\begin{Proposition}\label{interpropn}
     Let $1<p<\infty$ and $1/p + 1/q = 1$.  Fix a positive integer $N$, distinct points $z_0, z_1,\ldots, z_N \in \mathbb{D}$ and points $w_0, w_1,\ldots, w_N \in {\mathbb{C}}$.  There exists $h \in \ell^p_A$ with $\|h\|_p \leq 1$ such that $h(z_k) = w_k$ for all $k$, $0 \leq k \leq N$, if and only if
     \[
           \sup\left\{ \frac{\big| \beta_0 w_0 + \beta_1 w_1 + \cdots + \beta_N w_N  \big|}
                {\big\| \beta_0 \Lambda_{z_0} +\beta_1 \Lambda_{z_1} +\cdots +\beta_N \Lambda_{z_N} \|_q}  :\
                  (\beta_0, \beta_1,\ldots, \beta_N) \in \mathbb{C}^{N+1}\setminus\{(0,\ldots, 0)\}  \right\}  \ \, \leq \ \, 1.
     \]
\end{Proposition}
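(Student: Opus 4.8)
The plan is to read the right-hand supremum as the norm of a coset in a quotient space, evaluate that norm by Hahn--Banach duality, and then identify the relevant annihilator using Lemma~\ref{lookoverthere}.

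First, let $\mathscr{J}$ be the subspace of $\ell^p_A$ of functions vanishing on $\{z_0,\ldots,z_N\}$, as in Lemma~\ref{lookoverthere}; it is closed and, via the surjective evaluation map $f\mapsto(f(z_0),\ldots,f(z_N))$, has codimension $N+1$. The function $h_\ast$ of \eqref{aitchast} exhibits a member of the coset $h_\ast+\mathscr{J}$, which is precisely the set of admissible interpolants. Hence
\[
   \inf\{\|h\|_p : h(z_k)=w_k \text{ for } 0\le k\le N\}=\operatorname{dist}(h_\ast,\mathscr{J}),
\]
the norm of this coset in $\ell^p_A/\mathscr{J}$. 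Since $\ell^p_A$ is reflexive for $1<p<\infty$, the closed subspace $\mathscr{J}$ is proximinal, so the infimum is attained; therefore an interpolant of norm at most $1$ exists if and only if $\operatorname{dist}(h_\ast,\mathscr{J})\le 1$. This reduces the proposition to computing $\operatorname{dist}(h_\ast,\mathscr{J})$.

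Second, I would apply the standard duality formula for distance to a subspace,
\[
   \operatorname{dist}(h_\ast,\mathscr{J})=\sup\{\,|\langle h_\ast,g\rangle| : g\in\mathscr{J}^{\perp},\ \|g\|_q\le 1\,\},
\]
a consequence of Hahn--Banach, together with the identification $\mathscr{J}^{\perp}=\bigvee_{\ell^q_A}\{\Lambda_{z_0},\ldots,\Lambda_{z_N}\}$ supplied by Lemma~\ref{lookoverthere}. Every $g\in\mathscr{J}^{\perp}$ thus has the form $g=\beta_0\Lambda_{z_0}+\cdots+\beta_N\Lambda_{z_N}$, and the kernels $\Lambda_{z_0},\ldots,\Lambda_{z_N}$ are linearly independent (a nontrivial vanishing combination would annihilate $f(z_k)$ for every $f$, which is impossible for distinct $z_k$), so $\|\beta_0\Lambda_{z_0}+\cdots+\beta_N\Lambda_{z_N}\|_q\neq 0$ whenever $(\beta_0,\ldots,\beta_N)\neq 0$. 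Using the reproducing identity $\langle h_\ast,\Lambda_{z_k}\rangle=h_\ast(z_k)=w_k$ gives $\langle h_\ast,g\rangle=\beta_0 w_0+\cdots+\beta_N w_N$, and homogeneity lets me drop the normalization $\|g\|_q\le 1$ in favor of the quotient form. This yields
\[
   \operatorname{dist}(h_\ast,\mathscr{J})=\sup\left\{\frac{|\beta_0 w_0+\cdots+\beta_N w_N|}{\|\beta_0\Lambda_{z_0}+\cdots+\beta_N\Lambda_{z_N}\|_q} : (\beta_0,\ldots,\beta_N)\in\mathbb{C}^{N+1}\setminus\{0\}\right\},
\]
and combining with the first step completes the proof.

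The only delicate point is the attainment of the infimum defining the coset norm, which is what makes ``$\le 1$'' the correct (non-strict) inequality on both sides; this is exactly where reflexivity (equivalently uniform convexity) of $\ell^p_A$ for $1<p<\infty$ enters. The remaining ingredients---the Hahn--Banach distance formula, the linear independence of the kernels, and the homogeneity rescaling---are routine, so I do not anticipate a genuine obstacle; the substance of the argument is Lemma~\ref{lookoverthere} combined with duality.
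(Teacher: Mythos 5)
Your argument is correct and follows essentially the same route as the paper: reduce the problem to the coset $h_\ast+\mathscr{J}$, compute $\operatorname{dist}(h_\ast,\mathscr{J})$ by the Hahn--Banach duality formula, and identify $\mathscr{J}^{\perp}$ with the span of the kernels via Lemma~\ref{lookoverthere}. Your explicit remark that reflexivity (proximinality of $\mathscr{J}$) guarantees attainment of the infimum, so that the non-strict inequality ``$\leq 1$'' is the right threshold, is a point the paper leaves implicit, but it does not change the substance of the argument.
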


\begin{proof}
     Let $\mathscr{J}$ be the subspace of functions in $\ell^p_A$ that vanish at the points $z_0, z_1,\ldots, z_N$ (and possibly elsewhere).  Any member $f$ of $\ell^p_A$ satisfying the conditions $f(z_k) = w_k$, $0 \leq k \leq N$, must be of the form $f = h_{\ast} + J$, where $J \in \mathscr{J}$ and $h_{\ast}$ is as given in \eqref{aitchast}.  The interpolation problem is equivalent to asking whether the coset $h_{\ast} + \mathscr{J}$ contains an element of norm less than or equal to unity.
     
The distance from $h_{\ast}$ to $\mathscr{J}$ is equal to the norm of $h_{\ast}$, viewed as a bounded linear functional on the annihilator of $\mathscr{J}$.  But Lemma \ref{lookoverthere} gives
\[
     \mathscr{J}^{\perp} =  \bigvee_{\ell^q_A} \{\Lambda_{z_0}, \Lambda_{z_1},\ldots, \Lambda_{z_N}\}.
\]

Consequently,
\begin{align*}
         I &=  \inf\big\{ \|h\|_p :\  h(z_k) = w_k,\ 0 \leq k \leq N   \big\} \\
           &=  \inf\big\{ \|h_{\ast} + J \|_p :\ J \in \mathscr{J} \big\} \\
           &=  \sup\left\{ \frac{|\langle h_{\ast}, \psi \rangle|}{\|\psi\|_q}:\  \psi \in \mathscr{J}^{\perp}\setminus\{0\}  \right\} \\
           &=   \sup\left\{ \frac{\big| \beta_0 w_0 + \beta_1 w_1 + \cdots + \beta_N w_N  \big|}
                {\big\| \beta_0 \Lambda_{z_0} +\beta_1 \Lambda_{z_1} +\cdots +\beta_N \Lambda_{z_N} \|_q}  :\
                  (\beta_0, \beta_1,\ldots, \beta_N) \in \mathbb{C}^{N+1}\setminus\{(0, 0,\ldots,0)\} \right\}.
\end{align*}
\end{proof}

This extends to interpolation on an infinite sequence.

\begin{Proposition}\label{interpinfcase}
     Let $1<p<\infty$ and $1/p + 1/q = 1$.  Fix distinct points $z_0, z_1,\ldots \in \mathbb{D}$ and points $w_0, w_1,\ldots \in {\mathbb{C}}$.  There exists $h \in \ell^p_A$ such that $h(z_k) = w_k$ for all $k$, $k \geq 0$ if and only if
     \begin{equation}\label{interpbddcrit}
        \lim_{N\rightarrow\infty}   \sup\left\{ \frac{\big| \beta_0 w_0 + \beta_1 w_1 + \cdots + \beta_N w_N \big|}
                {\big\| \beta_0 \Lambda_{z_0} +\beta_1 \Lambda_{z_1} +\cdots +\beta_N \Lambda_{z_N} \|_q}  :\
                  (\beta_0, \beta_1,\ldots, \beta_N) \in \mathbb{C}^{N+1}\setminus \{{0}\}  \right\}  \ \, < \ \, \infty.
     \end{equation}
\end{Proposition}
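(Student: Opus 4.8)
The plan is to reduce the infinite interpolation problem to the finite ones handled in Proposition~\ref{interpropn} and then pass to a weak limit, using the reflexivity of $\ell^p_A$ noted in Section~\ref{prelim}. For each $N$ set
\[
   S_N := \sup\left\{ \frac{\big| \beta_0 w_0 + \cdots + \beta_N w_N \big|}{\big\| \beta_0 \Lambda_{z_0} + \cdots + \beta_N \Lambda_{z_N}\big\|_q} :\ (\beta_0,\ldots,\beta_N) \in \mathbb{C}^{N+1}\setminus\{0\}\right\},
\]
which is finite for every $N$ since the $\Lambda_{z_k}$ are linearly independent in $\ell^q_A$ (distinct points are separated by polynomials), so the denominator is bounded below on the unit sphere of $\mathbb{C}^{N+1}$. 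Rescaling the targets $\{w_k\}$ by a constant and invoking the homogeneity of $S_N$, Proposition~\ref{interpropn} shows that $S_N$ coincides with
\[
   I_N := \inf\{\|h\|_p :\ h \in \ell^p_A,\ h(z_k) = w_k \ \text{for}\ 0 \le k \le N\}.
\]
Imposing more interpolation nodes can only enlarge this infimum, so $(I_N)$ is nondecreasing and $\lim_{N\to\infty} I_N = \sup_N I_N =: I \in [0,\infty]$; thus condition~\eqref{interpbddcrit} asserts exactly that $I < \infty$. One direction is then immediate: if $h \in \ell^p_A$ satisfies $h(z_k)=w_k$ for all $k\ge 0$, then $h$ is admissible in every finite problem, so $I_N \le \|h\|_p$ for each $N$ and hence $I \le \|h\|_p < \infty$.

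For the converse, suppose $I < \infty$. For each $N$ choose $h_N \in \ell^p_A$ with $h_N(z_k) = w_k$ for $0 \le k \le N$ and $\|h_N\|_p \le I_N + 1 \le I + 1$ (such $h_N$ exist, e.g.\ a suitable multiple of the function $h_\ast$ in \eqref{aitchast}, and by reflexivity one may in fact take $\|h_N\|_p = I_N$). Since $1<p<\infty$, the space $\ell^p_A$ is reflexive, so the bounded sequence $(h_N)$ has a weakly convergent subsequence $h_{N_j} \rightharpoonup h$ in $\ell^p_A$; weak lower semicontinuity of the norm gives $\|h\|_p \le \liminf_j \|h_{N_j}\|_p \le I+1$, so $h \in \ell^p_A$. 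Fixing $k \ge 0$, point evaluation at $z_k$ is the bounded functional $f \mapsto \langle f, \Lambda_{z_k}\rangle$, hence weakly continuous, and $h_{N_j}(z_k) = w_k$ once $N_j \ge k$; therefore $h(z_k) = \lim_j \langle h_{N_j}, \Lambda_{z_k}\rangle = w_k$. As $k$ was arbitrary, $h$ interpolates the full sequence.

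The only step that is not bookkeeping is the weak-compactness argument in the converse, which relies on reflexivity of $\ell^p_A$ (valid precisely because $1<p<\infty$), the weak lower semicontinuity of $\|\cdot\|_p$, and the weak continuity of each evaluation functional $\Lambda_{z_k}$. Identifying $I_N$ with $S_N$ via Proposition~\ref{interpropn} and the homogeneity/scaling, together with the monotonicity of $(I_N)$, are routine.
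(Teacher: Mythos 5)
Your proof is correct and follows essentially the same route as the paper's: identify the sup with the finite truncated infimum via Proposition~\ref{interpropn}, observe monotonicity, and in the converse extract a weak limit of bounded finite interpolants using reflexivity (Banach--Alaoglu) and conclude via weak continuity of the point evaluations $\Lambda_{z_k}$. The only cosmetic difference is that the paper takes the exact norm-minimizing interpolant at each stage $N$ (invoking uniform convexity), whereas you allow the slack $\|h_N\|_p \le I_N + 1$, which is equally valid.
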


\begin{proof}
     Suppose an interpolating function $h$ exists.  By Proposition \ref{interpropn} for every positive integer $N$,
     \begin{align*}
         \|h\|_p  &\geq \inf \big\{\|H\|_p :\ H(z_k) = w_k,\ 0 \leq k \leq N \big\} \\
            &=   \sup\left\{ \frac{\big| \beta_0 w_0 + \beta_1 w_1 + \cdots + \beta_N w_N  \big|}
                {\big\| \beta_0\Lambda_{z_0} +\beta_1 \Lambda_{z_1} +\cdots +\beta_N \Lambda_{z_N} \|_q}  :\
                  (\beta_0, \beta_1,\ldots, \beta_N) \in \mathbb{C}^{N+1}\setminus\{0\}  \right\}.
     \end{align*}
     Then take $N \longrightarrow\infty$.
     
     Conversely, suppose that the bound \eqref{interpbddcrit} holds.  For each $N$, there is the corresponding interpolation function $h_{\ast, N}$ from \eqref{aitchast}, and a function $J = J_N \in \ell^p_A$ for which the extreme value in
     \[
          \inf\big\{ \| h_{\ast,N} + J \|_p: \  J(z_k) = 0, \ 0 \leq k \leq N  \big\}
     \]
     is attained (uniform convexity of $\ell^p_A$ makes this possible).
     
     By the Banach-Alaoglu theorem, there is a weakly convergent subsequence  $\{ h_{\ast,N_k} + J_{N_k}\}_{k=0}^{\infty}$.  Let $h \in \ell^p_A$ be the weak limit (the weak and weak-$\ast$ topologies coincide here since $1<p<\infty$).  By weak convergence, applied to the point evaluations at $z_k$, we see that $h(z_k) = w_k$ for all $k$.  \end{proof}

Our next objective is to interpolate to a target sequence belonging to a particular space, namely $\ell^p$.  Specifically, given $W = (w_k)_{k=0}^{\infty} \in \ell^p$ we wish to find a function $f \in \ell^p_A$ such that 
\[
        (1 - |z_k|^q)^{1/q} f(z_k) = w_k
\]
for all $k$.  The presence of the weight $(1 - |z_k|^q)^{1/q}$ is justified in part by analogous results from other spaces; we shall see that it is a natural choice in the present situation.

\begin{Corollary}\label{suffcondinterp}
    Let $1<p<\infty$, and $1/p + 1/q = 1$.  Suppose that $\{z_0, z_1, z_2,\ldots\}$ is a sequence of distinct points in $\mathbb{D}$ satisfying the condition
    \begin{equation}\label{punifsepcond}
           \sup\left\{ \frac{\big( |c_0|^q/(1 - |z_0|^q)  + |c_1|^q/(1 - |z_1|^q) + \cdots  \big)^{1/q}}
                {\big\| c_0 \Lambda_{z_0} +c_1 \Lambda_{z_1} +\cdots  \|_q}  :\
                  c_k \in \mathbb{C},\ \mbox{not all zero}  \right\}  \  < \ \infty.
    \end{equation}
    Then for every sequence $(w_0, w_1, w_2\ldots) \in \ell^p$ there exists a function $f \in \ell^p_A$ such that
    \[
          (1 - |z_k|^q)^{1/q}  f(z_k) = w_k
    \]
    for all $k$.
\end{Corollary}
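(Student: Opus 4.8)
The plan is to deduce this directly from Proposition \ref{interpinfcase}, applied not to $(w_k)$ itself but to the rescaled target $\hat w_k := w_k/(1-|z_k|^q)^{1/q}$. Indeed, a function $f \in \ell^p_A$ satisfies $(1-|z_k|^q)^{1/q} f(z_k) = w_k$ for all $k$ precisely when $f(z_k) = \hat w_k$ for all $k$, so it suffices to verify that the sequences $\{z_k\}$ and $\{\hat w_k\}$ satisfy the boundedness criterion \eqref{interpbddcrit}.

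To check \eqref{interpbddcrit}, fix $N$ and $(\beta_0,\ldots,\beta_N) \in \mathbb{C}^{N+1}\setminus\{0\}$, and estimate the numerator. Writing $\beta_k \hat w_k = \big(\beta_k/(1-|z_k|^q)^{1/q}\big)\, w_k$ and applying H\"older's inequality with the conjugate exponents $q$ and $p$ gives
\[
  \Big| \sum_{k=0}^{N} \beta_k \hat w_k \Big|
  \;\leq\; \Big( \sum_{k=0}^{N} \frac{|\beta_k|^q}{1-|z_k|^q} \Big)^{1/q}
           \Big( \sum_{k=0}^{N} |w_k|^p \Big)^{1/p}
  \;\leq\; \|W\|_{\ell^p}\, \Big( \sum_{k=0}^{N} \frac{|\beta_k|^q}{1-|z_k|^q} \Big)^{1/q}.
\]
Dividing by $\big\| \beta_0 \Lambda_{z_0} + \cdots + \beta_N \Lambda_{z_N} \big\|_q$ and recognizing that a finitely supported choice of coefficients $(c_k)$ is a special case of the sequences allowed in \eqref{punifsepcond}, we conclude that
\[
  \frac{\big| \beta_0 \hat w_0 + \cdots + \beta_N \hat w_N \big|}
       {\big\| \beta_0 \Lambda_{z_0} + \cdots + \beta_N \Lambda_{z_N} \big\|_q}
  \;\leq\; M\, \|W\|_{\ell^p},
\]
where $M$ denotes the (finite) supremum in \eqref{punifsepcond}. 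This bound is independent of $N$ and of $(\beta_k)$, so the supremum in \eqref{interpbddcrit} is at most $M\|W\|_{\ell^p}$ for every $N$, and the limit as $N\to\infty$ is finite.

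Hence Proposition \ref{interpinfcase} applies and yields an $f \in \ell^p_A$ with $f(z_k) = \hat w_k$ for all $k$, which is the desired conclusion. There is essentially no obstacle here beyond spotting the H\"older split of the pairing $\sum \beta_k \hat w_k$ into the $\ell^q$-factor $\big(\sum |\beta_k|^q/(1-|z_k|^q)\big)^{1/q}$ and the $\ell^p$-factor $\|W\|_{\ell^p}$; the only point requiring a word of care is that the hypothesis \eqref{punifsepcond}, being a supremum over all not-identically-zero coefficient sequences, in particular controls all the finite truncations that arise in \eqref{interpbddcrit}.
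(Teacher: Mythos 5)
Your proposal is correct and is essentially the same argument as the paper's: rescale the targets to $w_k/(1-|z_k|^q)^{1/q}$, apply H\"older's inequality to split the numerator into the $\ell^p$-norm of $(w_k)$ and the weighted $\ell^q$-factor $\big(\sum |\beta_k|^q/(1-|z_k|^q)\big)^{1/q}$, invoke hypothesis \eqref{punifsepcond} (noting finitely supported coefficient sequences are a special case), and then apply Proposition \ref{interpinfcase}. No differences worth noting.
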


\begin{proof} 
    Replace $w_k$ with $w_k/(1 - |z_k|^q)^{1/q}$ in Proposition \ref{interpinfcase}, and apply H\"{o}lder's inequality to get
        \begin{align*}
      &\qquad    \frac{\big| c_0 w_0/(1 - |z_0|^q)^{1/q} + c_1 w_1/(1 - |z_1|^q)^{1/q} + \cdots + c_N w_N/(1 - |z_N|^q)^{1/q}  \big|}
                {\big\| c_0 \Lambda_{z_0} +c_1 \Lambda_{z_1} +\cdots +c_N \Lambda_{z_N} \|_q}  \\
      &\leq \frac{\big( |w_0|^p +\cdots+|w_N|^p  \big)^{1/p} \big(  |c_0|^q/(1 - |z_0|^q) +\cdots + |c_N|^q/(1 - |z_N|^q) + \cdots  \big)^{1/q}}
                {\big\| c_0 \Lambda_{z_0} +c_1 \Lambda_{z_1} +\cdots +c_N \Lambda_{z_N} \|_q}.
         \end{align*}
     Take $N\longrightarrow\infty$.
     \end{proof}

As noted before, $1/(1 - |\zeta|^q)^{1/q} = \|\Lambda_{\zeta}\|_q$ for any $\zeta \in \mathbb{D}$; hence the condition \eqref{punifsepcond} could also be expressed as
\begin{equation}\label{unifsepcond}
      \sup\left\{ \frac{\big( |c_0|^q \|\Lambda_{z_0}\|_q^q  + |c_1|^q \|\Lambda_{z_1}\|_q^q   + \cdots  \big)^{1/q}}
                {\big\| c_0 \Lambda_{z_0} +c_1 \Lambda_{z_1} +\cdots  \|_q}  :\
                  c_k \in \mathbb{C},\ (c_0,c_1,\ldots) \neq 0  \right\}  \  < \ \infty.
\end{equation}

The boundedness conditions on quantities of this type will be important in results to come; let us now give them labels.

\begin{Definition}
Say that a sequence of distinct points $Z = \{z_0, z_1, z_2,\ldots\}$ in $\mathbb{D}$ satisfies the condition (LB) if there exists $C_1 >0$ such that
\[
   \text{(LB)} \hspace{0.5in}        C_1 \leq   \frac{\big\| c_0 \Lambda_{z_0} + c_1 \Lambda_{z_1} +\cdots \big\|_q}{\big(|c_0|^q \|\Lambda_{z_0}\|_q^q + |c_1|^q \|\Lambda_{z_1}\|_q^q +\ldots\big)^{1/q}}  
\]
for all sequences $(c_k)$, not all entries being zero. 

Similarly we say that $Z$ satisfies (UB) if there exists $C_2 >0$ such that
\[
    \text{(UB)} \hspace{0.5in}           \frac{\big\| c_0 \Lambda_{z_0} + c_1 \Lambda_{z_1} +\cdots \big\|_q}{\big(|c_0|^q \|\Lambda_{z_0}\|_q^q + |c_1|^q \|\Lambda_{z_1}\|_q^q +\ldots\big)^{1/q}}    \leq C_2
\]
for all sequences $(c_k)$, not all entries being zero.
\end{Definition}

Notice that both (LB) and (UB) depend on the parameter $q$, the conjugate exponent to $p$, and that we previously encountered (LB) in the form of condition \eqref{unifsepcond}.

Of course, these conditions on a sequence $\{z_0, z_1, z_2,\ldots\}$ could also be viewed as conditions on the corresponding point evaluation kernel functions.  There is existing terminology for this situation. 

\begin{Definition}
Given a sequence of distinct points $Z = \{z_k\}_{k=0}^{\infty}$ of $\mathbb{D}$ we call the set of normalized kernels $\{\Lambda_{z_k}/\|\Lambda_{z_k}\|_q\}_{k=0}^{\infty}$ a {\it Riesz system} in $\ell^q_A$ if both (LB) and (UB) hold for some positive constants $C_1$ and $C_2$.
\end{Definition}

Let us distinguish those sequences on which we are able to interpolate to any target in $\ell^p$.

\begin{Definition}
We call a sequence $Z = \{z_0, z_1, z_2\ldots\}$ of distinct points in $\mathbb{D}$  a {\it universal interpolation sequence} 
for $\ell^p_A$ if for every $\{w_k\}_{k=0}^{\infty} \in \ell^p$ there exists a function $f \in \ell^p_A$ such that $f(z_k)(1 - |z_k|^q)^{1/q} = w_k$ for all $k$.
\end{Definition}

The following two results presented on page 1044 of \cite{VK2} furnish numerous examples of universal interpolation sequences  for $\ell^p_A$.

\begin{Theorem}[Vinogradov] \label{mainvino1} Suppose that $1<p<\infty$.
     Let the sequence $Z = \{z_0, z_1, z_2,\ldots\}$ lie in a Stoltz domain.  Then
     $Z$ is a universal interpolation sequence for $\ell^p_A$ if and only if $Z$ is uniformly separated.
\end{Theorem}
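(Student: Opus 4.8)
The plan is to establish, for a sequence $Z$ confined to a Stoltz domain, the chain of equivalences
\[
Z\ \text{is a universal interpolation sequence}\iff Z\ \text{satisfies \textup{(LB)}}\iff Z\ \text{is uniformly separated},
\]
the first of which needs no hypothesis on $Z$ beyond distinctness. For that first equivalence, Corollary~\ref{suffcondinterp} (read through \eqref{unifsepcond}) already supplies ``(LB)~$\Rightarrow$~universal interpolation'', so only the converse requires work. If $Z$ is universally interpolating, then the evaluation map $Tf:=\big((1-|z_k|^q)^{1/q}f(z_k)\big)_k$, which is continuous from $\ell^p_A$ into $\mathbb{C}^{\infty}$ equipped with the product topology, carries the space $X:=\{f\in\ell^p_A:Tf\in\ell^p\}$ (a Banach space under the graph norm, completeness following from the coordinatewise continuity of $T$) boundedly onto $\ell^p$; the open mapping theorem then furnishes $c>0$ such that every $W\in\ell^p$ with $\|W\|_{\ell^p}\leq c$ equals $Tf$ for some $f$ with $\|f\|_p\leq 1$. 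Writing $\kappa_w:=\Lambda_w/\|\Lambda_w\|_q$ and noting that $(Tf)_k=\langle f,\kappa_{z_k}\rangle$, we obtain, for all finitely supported scalars $(a_k)$,
\[
\Big\|\sum_k a_k\kappa_{z_k}\Big\|_q=\sup_{\|f\|_p\leq 1}\Big|\sum_k a_k(Tf)_k\Big|\ \geq\ \sup_{\|W\|_{\ell^p}\leq c}\Big|\sum_k a_kW_k\Big|\ =\ c\,\|(a_k)\|_{\ell^q},
\]
which is precisely (LB). Finally, I will invoke the classical geometric fact that a sequence lying in a Stoltz domain is uniformly separated if and only if it is (pseudohyperbolically) separated, if and only if, after reindexing so that $|z_0|\leq|z_1|\leq\cdots$, its moduli satisfy $1-|z_{k+1}|\leq\lambda(1-|z_k|)$ for some $\lambda\in(0,1)$; see \cite{VK2}.

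Granting these reductions, the real content is that \emph{uniform separation inside a Stoltz domain yields} (LB). The $n$th Maclaurin coefficient of $\kappa_{z_k}$ is $(1-|z_k|^q)^{1/q}z_k^n$, so $n\mapsto(1-|z_k|^q)|z_k|^{qn}$ is in effect a geometric distribution concentrated on $0\leq n\lesssim N_k:=(1-|z_k|)^{-1}$; since the moduli decay geometrically, the scales $N_k$ grow geometrically, and the coefficient ``windows'' of consecutive kernels overlap only boundedly. The estimate is then of Littlewood--Paley type: partition $\mathbb{Z}_{\geq 0}$ into blocks $B_k$ adapted to the $z_k$, and show that on $B_k$ the term $a_k\kappa_{z_k}$ dominates, using that for $j<k$ one has $|z_j|^n\approx e^{-n/N_j}$ with $n/N_j$ super-geometrically large throughout $B_k$, while for $j>k$ the prefactor $(1-|z_j|^q)^{1/q}\asymp N_j^{-1/q}$ brings a geometric gain in $j-k$; after first splitting $Z$ into a bounded number of subsequences along which $\lambda$ may be taken as small as desired (possible exactly because $Z$ is uniformly separated), summing the blocks gives $\|\sum_k a_k\kappa_{z_k}\|_q\gtrsim\|(a_k)\|_{\ell^q}$. (The same bookkeeping also yields (UB), which is not needed here.) Equivalently one could bypass (LB) altogether and build the interpolating functions by hand, which is Vinogradov's route; by the first paragraph the two tasks coincide.

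For the reverse implication, suppose $Z$ lies in a Stoltz domain and satisfies (LB). Applying (LB) to $(a_k)=e_j-e_k$ gives $\|\kappa_{z_j}-\kappa_{z_k}\|_q\geq 2^{1/q}C_1$ for all $j\neq k$; a short uniform-continuity estimate for $w\mapsto\kappa_w$ — after a rotation normalizing one of the two points to be positive, compare the explicit coefficient sequences and use that the active indices $n\asymp(1-|w|)^{-1}$ are insensitive to moving the other point inside a small pseudohyperbolic ball — shows that $\|\kappa_z-\kappa_w\|_q\to 0$ as $\rho(z,w)\to 0$, so $\rho(z_j,z_k)$ is bounded below and $Z$ is separated, hence uniformly separated by the classical fact above. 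The main obstacle is the Littlewood--Paley estimate of the preceding paragraph: with $p\neq 2$ there is no orthogonality to suppress the cross terms among overlapping kernel windows, so the geometric decay of $1-|z_k|$ — i.e.\ the Stoltz hypothesis — must be exploited by hand, and it is the passage to very-well-separated subsequences that makes the overlap constants beatable. A secondary technical point is the comparison between $\|\kappa_z-\kappa_w\|_q$ and the pseudohyperbolic distance, which is an identity when $p=2$ but for general $p$ is of the same flavor as the ``generalized pseudohyperbolic metric'' estimates of Section~\ref{weak-separation}.
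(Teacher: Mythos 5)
A preliminary remark: the paper itself does not prove this theorem --- it imports it from Vinogradov (\cite{VK2}, p.~1044) --- so your argument must stand entirely on its own. Your first reduction is sound: (LB) $\Rightarrow$ universal interpolation is exactly Corollary \ref{suffcondinterp}, and your converse via the graph-norm space $X$, the open mapping theorem, and the duality $\|\sum_k a_k\kappa_{z_k}\|_q=\sup_{\|f\|_p\leq 1}|\sum_k a_k(Tf)_k|$ is correct (this is essentially the content of Theorem \ref{interprieszbas}, with the boundedness issue handled more carefully than in the paper). The necessity half of the theorem is then plausibly completable along your lines. Note, however, that your quoted geometric fact is slightly misstated: in a Stolz domain a $\delta$-separated sequence may contain up to a bounded number $M$ of points per dyadic scale of $1-|z|$, so after ordering by modulus one only gets $1-|z_{k+M}|\leq\lambda(1-|z_k|)$ with a lag $M$, not $1-|z_{k+1}|\leq\lambda(1-|z_k|)$.

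The genuine gap is in the sufficiency direction, which is the entire content of Vinogradov's theorem: uniform separation inside a Stolz domain $\Rightarrow$ (LB). Your block argument controls cross terms only through the moduli $|z_j|^n$ and the prefactors $N_j^{-1/q}$; this disposes of points whose scales $N_j$ differ from $N_k$ by large factors, but says nothing about the (up to $M$ per scale) points with $N_j\asymp N_k$, whose coefficient profiles agree in modulus with that of $\kappa_{z_k}$ throughout $B_k$ and differ only in oscillation --- precisely the information that orthogonality supplies when $q=2$ and that is unavailable here. Moreover, because of the lag the effective ratio in your cross-term sum is $\lambda^{1/(qM)}$, so its tail is of size comparable to $qM$ and cannot be beaten by the $O(1)$ main-term constant. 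The proposed remedy --- split $Z$ into boundedly many subsequences along which $\lambda$ is small --- does not repair this: the lower bound (LB) is not stable under finite unions, so establishing (LB) for each subsequence gives no lower bound for linear combinations over the whole sequence, while if you keep all points in a single estimate the same-scale cross terms reappear untouched. Thus the key estimate is missing, and, as you yourself concede, one must fall back on Vinogradov's actual construction of the interpolating functions; as written, the proposal proves the reductions surrounding the theorem but not the theorem itself.
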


\begin{Theorem}[Vinogradov] \label{mainvino2}
     Suppose that $1<p<\infty$.  If the sequence $Z= \{z_0, z_1, z_2,\ldots\}$ satisfies the conditions
\begin{align*}
     &  |z_k| \leq |z_{k+1}|,\ \ k=0, 1, 2,\ldots;\mbox{\ and}\\
     &  \sup_n(1 - |z_n|)^{-1}\sum_{k\geq n} (1 - |z_k|) < \infty,
\end{align*}
   and there exists $\gamma > 0$ such that 
\[
      \inf_{j\neq k}\Big|\frac{z_j - z_k}{1 - \overline{z}_k z_j}\Big| \geq \gamma,
\]
then $Z$ is a universal interpolation sequence for $\ell^p_A$.
\end{Theorem}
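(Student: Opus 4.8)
The plan is to derive the theorem from Corollary~\ref{suffcondinterp}: it is enough to show that the three hypotheses force $Z$ to satisfy \eqref{unifsepcond}, that is, condition (LB). Writing $\Lambda_k:=\Lambda_{z_k}$, using $\|\Lambda_k\|_q^{q}=(1-|z_k|^{q})^{-1}$, and recalling that the $n$-th Maclaurin coefficient of $\sum_k c_k\Lambda_k$ equals $\sum_k c_k z_k^{n}$, what must be proved is the existence of a constant $C_1>0$ with
\[
   \sum_{n\geq 0}\Big|\sum_k c_k z_k^{n}\Big|^{q}\;\geq\;C_1^{q}\sum_k\frac{|c_k|^{q}}{1-|z_k|^{q}}
\]
for every finitely supported scalar sequence $(c_k)$. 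Since $1-|z_k|^{q}$ is comparable to $1-|z_k|$, with constants depending only on $q$, it is equivalent to prove this lower bound with $\sum_k|c_k|^{q}(1-|z_k|)^{-1}$ on the right.

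First I would extract the geometric content of the hypotheses. Order the points so that $|z_k|$ is nondecreasing, and put $N_k:=\lfloor(1-|z_k|)^{-1}\rfloor$, so that $N_k\asymp(1-|z_k|)^{-1}$ and $|z_k|^{n}\leq e^{-n(1-|z_k|)}$. The tail condition forces only boundedly many $z_k$ into any dyadic annulus $A_m:=\{\,1-2^{-m}\leq|z|<1-2^{-m-1}\,\}$: if $M_m$ points lie in $A_m$, then for the first such index $n$ we get $M_m2^{-m-1}<\sum_{j\geq n}(1-|z_j|)\leq A(1-|z_n|)\leq A2^{-m}$, so $M_m<2A$. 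After a standard reduction that handles the boundedly many, uniformly separated points in each annulus separately (a finite-dimensional matter, costing constants depending only on $A$ and $\gamma$), I may therefore assume the moduli are exponentially spaced: $N_{k+1}\geq\lambda N_k$ for a fixed $\lambda>2$. In this lacunary regime $|z_k|^{n}$ is comparable to $1$ for $n\lesssim N_k$ and decays for $n\gtrsim N_k$, so the energy $\sum_n|z_k|^{nq}=\|\Lambda_k\|_q^{q}$ is comparable to $N_k$ and, up to constants, is already seen on the ``slab'' of frequencies $I_k:=[N_k,2N_k)$, on which $|z_k|^{n}\asymp 1$. Since $\lambda>2$, the slabs $I_k$ are pairwise disjoint, so $\|\sum_j c_j\Lambda_j\|_q^{q}\geq\sum_k\sum_{n\in I_k}|\sum_j c_j z_j^{n}|^{q}$, and (LB) will follow once I establish the localized estimate
\[
   \sum_{n\in I_k}\Big|\sum_j c_j z_j^{n}\Big|^{q}\;\gtrsim\;|c_k|^{q}N_k\qquad\text{for each }k.
\]

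To prove this, I would split the inner sum on $I_k$ into the principal term $c_k z_k^{n}$, a low tail $\sum_{j<k}c_j z_j^{n}$, and a high tail $\sum_{j>k}c_j z_j^{n}$. The low tail is negligible next to $|c_k|$: for $n\geq N_k$ and $j<k$ one has $|z_j|^{n}\leq e^{-n(1-|z_j|)}$, which decays super-exponentially in $k-j$ since $n(1-|z_j|)\gtrsim N_k/N_j\geq\lambda^{\,k-j}$, and uniform separation keeps the number of relevant $j$ per scale bounded. The high tail is the genuine obstacle: on $I_k$ its coefficients are \emph{not} small -- each has modulus $\asymp|c_j|$ -- so it can partially cancel the principal term. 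What rescues the estimate is a separation of scales: as $n$ runs over the slab $I_k$ of length $N_k$, the radial factor $|z_k|^{n}=e^{-n(1-|z_k|)}$ sweeps through a fixed range (roughly $[e^{-2},e^{-1}]$), whereas every higher-modulus term $c_j z_j^{n}$ with $j>k$ varies only on the much coarser scale $N_j\gg N_k$ and is, to leading order, constant across $I_k$; a sum of near-constant terms cannot follow the honest variation of $c_k z_k^{n}$, so the cancellation over the whole slab is necessarily incomplete and a definite proportion of $|c_k|^{q}N_k$ survives. Turning this heuristic into an inequality -- controlling the near-constant approximation of the high tail uniformly in $k$, including the case in which some arguments $\arg z_j$ are as small as $1/N_k$ -- is exactly the delicate multiscale bookkeeping in Vinogradov's argument, and it is the step I expect to be the main obstacle.

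There is an equivalent, more classical route through duality. Since $\|\sum_k c_k\Lambda_k\|_q=\sup\{\,|\sum_k c_k f(z_k)|:\|f\|_p\leq 1\,\}$, verifying (LB) amounts to producing, for every target in the unit ball of $\ell^p$, an interpolant $f\in\ell^p_A$ with $f(z_k)(1-|z_k|^{q})^{1/q}=w_k$ and $\|f\|_p$ bounded independently of the target -- that is, a uniformly bounded interpolation operator. The natural candidate is built from the norming functionals of the kernels, namely the functions $\Lambda_k^{\langle q-1\rangle}\in\ell^p_A$, corrected so as to be biorthogonal to the point evaluations, and the problem becomes to bound the $\ell^p$-norm of its Maclaurin coefficients. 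This is where $\ell^p_A$ parts company with $H^p$: there are no Blaschke products or Carleson-measure embeddings available, and it is precisely the rapid approach of $\{z_k\}$ to the boundary -- the tail condition -- that makes the relevant Gram-type system invertible on $\ell^p$ and its coefficients controllable. This also explains why the extra hypotheses cannot be dropped: a uniformly separated sequence need not be a universal interpolation sequence for $\ell^p_A$ when $p\neq 2$, so any proof must use the tail condition essentially, and the constants inevitably deteriorate as $A$ grows and $\gamma$ shrinks.
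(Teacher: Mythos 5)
Your overall strategy is consistent with the paper's framework: by Corollary \ref{suffcondinterp} it suffices to verify (LB), and your translation of (LB) into the coefficient inequality $\sum_{n\ge 0}\bigl|\sum_k c_k z_k^{n}\bigr|^{q}\ge C\sum_k |c_k|^{q}(1-|z_k|)^{-1}$ is correct, as is the count of at most $2A$ points per dyadic annulus. (Note that the paper itself gives no proof of this theorem -- it is imported from Vinogradov -- so your argument has to stand on its own.) As a proof, however, it has a genuine gap, which you acknowledge: the localized lower bound $\sum_{n\in I_k}\bigl|\sum_j c_j z_j^{n}\bigr|^{q}\gtrsim |c_k|^{q}N_k$ is supported only by the heuristic that the high tail is ``near-constant'' on the slab, and that heuristic is not correct as stated. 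For $j>k$ the radial factor $|z_j|^{n}$ is indeed essentially constant on $I_k$, but the oscillatory factor $e^{in\arg z_j}$ turns through an angle of order $N_k\arg z_j$ across the slab, which can be of any size; moreover the coefficients $|c_j|$ with $j>k$ can be arbitrarily large compared with $|c_k|$. So nothing in what you wrote rules out near-total cancellation of the principal term on $I_k$; preventing it requires a quantitative input (a Gram-matrix, Ingham/M\"{u}ntz-type, or dual extremal-function estimate exploiting uniform separation together with the tail condition), and that is precisely the content of Vinogradov's theorem. The step you flag as ``the main obstacle'' is the theorem.

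The preliminary reduction is also not as innocent as claimed. Establishing (LB) for each of the $O(A)$ lacunary subsequences you extract does not yield (LB) for the union: the norm $\bigl\|\sum_k c_k\Lambda_{z_k}\bigr\|_q$ couples all the points, and lower bounds do not pass to unions because of cross-cancellation between subsequences; handling the boundedly many points per annulus is not a ``finite-dimensional matter,'' since those points recur at every scale and their interaction with all other scales must be controlled uniformly in $k$. Your second route (a bounded interpolation operator built from corrected norming functionals $\Lambda_{z_k}^{\langle q-1\rangle}$) is likewise only a reformulation of what must be proved, with the crucial invertibility/boundedness of the resulting Gram-type system left open. In short: the reduction to (LB) and the slab decomposition are a sensible skeleton, but both the union step and the key slab estimate are missing, so the proposal does not prove the theorem.
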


\bigskip

Previously we showed (in Corollary \ref{suffcondinterp}) that a certain  Riesz system for $\ell^q_A$ gives rise to a universal interpolation sequence for $\ell^p_A$.
It turns out that the converse holds, and so we have the following statement.

\begin{Theorem}\label{interprieszbas}
  Let $1<p<\infty$ and $1/p + 1/q =1$.  A sequence $Z = \{z_0, z_1, z_2,\ldots\}$ of distinct points in $\mathbb{D}$ is a universal interpolation sequence for $\ell^p_A$ if and only if $\{\Lambda_{z_k}/\|\Lambda_{z_k}\|_q\}_{k=0}^{\infty}$ is a Riesz system in $\ell^q_A$.
\end{Theorem}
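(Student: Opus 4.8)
The plan is to prove the two implications separately, noting that one direction is essentially already in hand. The forward implication—that a Riesz system gives a universal interpolation sequence—is Corollary \ref{suffcondinterp}: condition (LB), which we identified with \eqref{unifsepcond}, is precisely the hypothesis \eqref{punifsepcond} needed there, and (UB) is not even required. So the real content is the converse: if $Z$ is a universal interpolation sequence for $\ell^p_A$, then both (LB) and (UB) must hold.

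For the converse I would argue via the closed graph theorem. Let $R \colon \ell^p_A \to \ell^p$ be the restriction-type map $f \mapsto \big((1-|z_k|^q)^{1/q} f(z_k)\big)_{k=0}^\infty$. First one checks $R$ is well-defined and bounded into $\ell^\infty$ at least (from $\|\Lambda_{z_k}\|_q = (1-|z_k|^q)^{-1/q}$ one gets $|(1-|z_k|^q)^{1/q}f(z_k)| \le \|f\|_p$), but in fact the universal interpolation hypothesis says $R$ is \emph{onto} $\ell^p$. To get quantitative control, consider instead the solution operator: for each $W \in \ell^p$ choose an interpolant; by uniform convexity of $\ell^p_A$ there is a unique minimal-norm $f_W$ with $R f_W = W$, and one shows the map $W \mapsto f_W$, while not linear, has closed graph and is bounded, i.e. there is $M$ with $\|f_W\|_p \le M\|W\|_p$ for all $W \in \ell^p$. (Alternatively, and more cleanly, apply the open mapping theorem to the bounded linear surjection $\widetilde R \colon \ell^p_A / \ker R \to \ell^p$ induced by $R$, noting $\ker R$ is closed; this yields the same bounded right-inverse-in-norm estimate without fussing over nonlinearity.) Feeding $W = e_j$ gives $\|\Lambda_{z_j}\|_q^{-1}\|\Lambda_{z_j}\|_q \cdots$—more usefully, testing against finite targets and using Proposition \ref{interpropn} converts $\|f_W\|_p \le M\|W\|_p$ into exactly the statement that the supremum in \eqref{interpbddcrit}, with $w_k = c_k^{\langle q-1\rangle}$-type substitutions, is bounded by $M$; unwinding the duality (the supremum over $\beta$ of a ratio of a pairing to a norm is a dual norm) gives
\[
  \big( |c_0|^q\|\Lambda_{z_0}\|_q^q + |c_1|^q\|\Lambda_{z_1}\|_q^q + \cdots\big)^{1/q} \le M \big\| c_0\Lambda_{z_0} + c_1\Lambda_{z_1} + \cdots\big\|_q,
\]
which is (LB) with $C_1 = 1/M$.

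For (UB), which is the reverse inequality, I would use the boundedness of $R$ together with duality directly: for any finitely supported $(c_k)$, the vector $g = \sum_k c_k \Lambda_{z_k} \in \ell^q_A$ acts on $f \in \ell^p_A$ by $\langle f, g\rangle = \sum_k c_k f(z_k)$, so
\[
  \big\| \textstyle\sum_k c_k \Lambda_{z_k}\big\|_q = \sup_{\|f\|_p \le 1} \Big| \sum_k c_k f(z_k) \Big| = \sup_{\|f\|_p\le 1} \Big| \sum_k c_k (1-|z_k|^q)^{-1/q}\, \big[(1-|z_k|^q)^{1/q}f(z_k)\big]\Big|.
\]
Since $R$ maps the unit ball of $\ell^p_A$ into $\ell^p$ boundedly—say $\|Rf\|_p \le M'\|f\|_p$, which one must verify (this is the point where boundedness, not just surjectivity, of $R$ is needed, and it follows from the closed graph theorem applied to $R$ itself since each coordinate functional is continuous)—the right side is at most $M' \sup_{\|v\|_p \le 1}|\sum_k c_k (1-|z_k|^q)^{-1/q} v_k| = M' \big(\sum_k |c_k|^q (1-|z_k|^q)^{-1}\big)^{1/q} = M'\big(\sum_k |c_k|^q \|\Lambda_{z_k}\|_q^q\big)^{1/q}$ by Hölder duality between $\ell^p$ and $\ell^q$. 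That is (UB) with $C_2 = M'$.

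The main obstacle I anticipate is the passage from ``$R$ is onto'' to a \emph{quantitative} norm-bounded solution operator, since $R$ need not have a bounded linear right inverse in a general Banach space and the minimal-norm selection is genuinely nonlinear when $p \neq 2$. The honest fix is to work with the induced map $\widetilde R$ on the quotient $\ell^p_A/\ker R$: this is a bounded linear bijection onto $\ell^p$ (bijectivity of $\widetilde R$ is exactly injectivity on the quotient plus surjectivity of $R$), hence an isomorphism by the bounded inverse theorem, and $\|\widetilde R^{-1}\| = M < \infty$ gives for each $W$ a coset of norm $\le M\|W\|_p$, i.e. an interpolant $f$ with $\|f\|_p \le (M+\varepsilon)\|W\|_p$. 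A secondary technical point is confirming $\ker R$ is closed (immediate, as an intersection of kernels of continuous point evaluations) and that $\widetilde R$ is well-defined and bounded (it is, with norm $\le$ that of $R$ as a map into $\ell^p$, which must first be checked via closed graph). Everything else is bookkeeping with Hölder duality and the finite-$N$ result Proposition \ref{interpropn}, which lets one replace the infinite sums by suprema of finite ones and pass to the limit.
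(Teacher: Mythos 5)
Your overall route is essentially the paper's: the forward direction is delegated to Corollary \ref{suffcondinterp} (only (LB) is needed there), and the converse is run through the restriction operator $R$ (the paper's $T$, $Tf=(f(z_k)/\|\Lambda_{z_k}\|_q)$), the quotient $\ell^p_A/\ker R$, the bounded inverse theorem, and a duality/adjoint computation turning the bounds on $\widetilde R$ and $\widetilde R^{-1}$ into (UB) and (LB) respectively. Your Hölder/duality bookkeeping for both bounds is correct and matches the paper's identification of the adjoint of $\tilde T$ with $(a_k)\mapsto\sum_k a_k\Lambda_{z_k}/\|\Lambda_{z_k}\|_q$.

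The one place where your writeup offers a justification that does not work as stated is the claim that boundedness of $R$ into $\ell^p$ ``follows from the closed graph theorem applied to $R$ itself since each coordinate functional is continuous.'' The closed graph theorem upgrades an \emph{everywhere-defined} linear map into $\ell^p$ with closed graph to a bounded one; it cannot supply the membership $Rf\in\ell^p$ for every $f\in\ell^p_A$, and that membership is exactly what is at issue: the universal interpolation hypothesis only says every $w\in\ell^p$ is of the form $Rf$ for \emph{some} $f$, not that $Rf\in\ell^p$ for all $f$. (Continuity of the coordinate functionals gives closedness of the graph, not the codomain membership; and note that (UB) is literally equivalent, via Theorem \ref{carleson2}, to $R$ mapping $\ell^p_A$ boundedly into $\ell^p$, so this step carries the full weight of the upper bound. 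In an abstract Banach-space setting, surjectivity of such a coordinate map onto $\ell^p$ does not by itself force it to map everything into $\ell^p$.) The same remark applies to your nonlinear minimal-norm selection: the closed graph theorem requires linearity, so only the quotient formulation is usable — and there too, well-definedness and boundedness of $\widetilde R$ as a map into $\ell^p$ presuppose the same ``into'' statement. To be fair, the paper's own proof compresses this identical point into the assertion that surjectivity of $T$ is equivalent to invertibility of $\tilde T$, so your argument is at parity with the paper in method; but the explicit appeal to the closed graph theorem should not be presented as closing that step. The (LB) half of your argument is unaffected, since the bounded solution estimate can be obtained from the everywhere-defined linear map $w\mapsto f_w+\ker R$ of $\ell^p$ into the quotient (closed graph applies there because point evaluations are continuous), followed by the duality of Proposition \ref{interpropn}.
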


\begin{proof}  It suffices to prove the converse of Corollary \ref{suffcondinterp}.
Define the linear mapping $T$ on $\ell^p_A$ by
\[
       Tf :=  \big(f(z_k)/\|\Lambda_{z_k}\|_q\big)_{k=0}^{\infty}.
\]
  The kernel $\mathscr{K}$ of $T$ is the subspace of functions of $\ell^p_A$ vanishing on $Z$.  To say that $Z$ is a universal interpolating sequence is equivalent to $T$ being surjective from $\ell^p_A$ to $\ell^p$; this is equivalent to the induced mapping
\[
     \tilde{T}: \ell^p_A / \mathscr{K} \longmapsto \ell^p
\]
given by
\[
     \tilde{T}(f + \mathscr{K}) :=  Tf
\]
being invertible.

But the adjoint of $\tilde{T}$ is given by $(a_k) \longmapsto (a_k\Lambda_{z_k}/\|\Lambda_{z_k}\|_q)$, as can be seen by
\begin{align*}
     \langle \tilde{T}(f + \mathscr{K}), (a_k) \rangle &=  \langle (f(z_k)\|\Lambda_{z_k}\|_q), (a_k) \rangle \\
          &= \langle f, (a_k \Lambda_{z_k}/\|\Lambda_{z_k}\|_q) \rangle.
\end{align*}

So boundedness of $\tilde{T}$ and its inverse imply that
\[
    C_1 \leq \frac{ \big\|a_0 \Lambda_{z_0}/\|\Lambda_{z_0}\|_q + a_1 \Lambda_{z_1}/\|\Lambda_{z_1}\|_q +\cdots \big\|_q }{
    \big( |a_0|^q + |a_1|^q + \cdots \big)^{1/q}}  \leq C_2 
\]
for some positive constants $C_1$ and $C_2$.  Now it is a trivial matter to substitute $a_k = c_k\|\Lambda_{z_k}\|_q$ to see that this condition is equivalent to $\{z_k\}$ being a Riesz system in $\ell^q_A$.  
\end{proof}

There is a connection between zero sets of $\ell^p_A$ and interpolation.  
We say that a sequence of distinct points $Z$ in $\mathbb{D}$ is a {\it pre-zero set} of $\ell^p_A$ if there is a nontrivial function in $\ell^p_A$ that vanishes on $Z$, but possibly elsewhere, however.  For $1<p<\infty$, a characterization of the pre-zero sets of $\ell^p_A$, expressed in terms of associated $p$-inner functions, was obtained in \cite{Chengetal1}.  (A precise description of the zero sets is an ongoing challenge.) 

\begin{Proposition}  Let $1<p<\infty$ and $1/p + 1/q = 1$.
     Suppose that $Z = \{z_k\}$ is a sequence of distinct nonzero points in $\mathbb{D}$.  If 
     $Z$ is a universal interpolation sequence for $\ell^p_A$,
     then $Z$ is a pre-zero set for $\ell^p_A$.
\end{Proposition}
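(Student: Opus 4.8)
The plan is to use the universal interpolation property to manufacture a function in $\ell^p_A$ that vanishes on all of $Z$ except at one point, and then repair it by multiplying by a linear factor so that it vanishes there too.

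First I would feed the target sequence $W = (1, 0, 0, \ldots) \in \ell^p$ into the universal interpolation hypothesis. Since $W \in \ell^p$, there is a function $f \in \ell^p_A$ with $f(z_0)(1 - |z_0|^q)^{1/q} = 1$ and $f(z_k)(1 - |z_k|^q)^{1/q} = 0$ for all $k \geq 1$. Equivalently, $f(z_0) = (1 - |z_0|^q)^{-1/q} \neq 0$ while $f(z_k) = 0$ for every $k \geq 1$; in particular $f \not\equiv 0$.

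Next, put $h(z) := (z - z_0) f(z)$. The forward shift $f \mapsto z f$ is an isometry of $\ell^p_A$ into itself (it merely shifts the Maclaurin coefficients), so $z f \in \ell^p_A$, and of course $z_0 f \in \ell^p_A$; hence $h = z f - z_0 f \in \ell^p_A$. By construction $h(z_0) = 0$, and for $k \geq 1$ we have $h(z_k) = (z_k - z_0) f(z_k) = 0$, so $h$ vanishes on all of $Z$. Finally $h$ is the product of the two analytic functions $z - z_0$ and $f$ on $\mathbb{D}$, neither of which is identically zero, so $h \not\equiv 0$. Thus $Z$ is a pre-zero set for $\ell^p_A$.

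There is no real obstacle in this argument; the only step that needs a moment's thought is the reduction from ``vanishing on $Z \setminus \{z_0\}$'' to ``vanishing on all of $Z$,'' which is handled by the elementary fact that $\ell^p_A$ is invariant under multiplication by $z$ (and hence by any polynomial). Note also that the hypothesis that the $z_k$ are nonzero is not actually needed for this conclusion.
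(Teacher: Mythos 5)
Your proof is correct and follows essentially the same route as the paper: interpolate the target $(1,0,0,\ldots)$ to produce $f \in \ell^p_A$ vanishing on $\{z_k\}_{k\geq 1}$ with $f(z_0) \neq 0$, then multiply by the polynomial $(z - z_0)$ to kill the remaining point. Your observation that the nonzero hypothesis on the $z_k$ is not actually needed here is also accurate.
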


\begin{proof}
    By hypothesis there exists $f \in \ell^p_A$ such that $f(z_0) = 1$ and $f(z_k) = 0$ for all $k >0$.  Obviously, this tells us that $\{z_k\}_{k=1}^{\infty}$ is a pre-zero set for $\ell^p_A$.  But then, all of $Z$ is a pre-zero set as well, since  the nontrivial function $(z - z_0)f(z)$ belongs to $\ell^p_A$ and vanishes on $Z$.
\end{proof}


\section{Continuity}\label{cont}

We turn to the problem of interpolating on an infinite sequence in relation to interpolating on a finite truncation of the sequence.  The limiting process is well behaved under certain conditions.

Fix $1<p<\infty$ and let $1/p + 1/q = 1$.  Let $Z = \{z_k\}$ be a sequence of distinct nonzero points in $\mathbb{D}$.  For each $N \in \mathbb{N}$, and each $j$, $0 \leq j \leq N$, let $f_{j,N}$ be the function in $\ell^p_A$ of minimum norm such that 
\begin{align}
    (1 - |z_j|^q)^{1/q}  f_{j,N}(z_j)  &= 1\label{fsubnjay1}\\
    (1 - |z_k|^q)^{1/q}  f_{j,N}(z_k) &= 0,\ \,0\leq k \leq N,\ k \neq j.\label{fsubnjay2}
\end{align}
Such $f_{j,N}$ exists uniquely, since it is the minimum norm element of a nonempty convex set in a uniformly convex space.

Similarly define $f_{j}$ be the function in $\ell^p_A$ of minimum norm such that 
\begin{align}
    (1 - |z_j|^q)^{1/q}  f_{j}(z_j)  &= 1  \label{fsubj1}\\
    (1 - |z_k|^q)^{1/q}  f_{j}(z_k) &= 0,\ \,k \neq j.  \label{fsubj2}
\end{align}
Such $f_j$ need not exist in general, but they do exist of course if $Z$ is a universal interpolation sequence.

We need the following tool to establish the main result of this section.

\begin{Lemma} \label{onemoretool} Let $1<p<\infty$, and $1/p + 1/q = 1$.
   Suppose that $v_1, v_2,\ldots$ are unit vectors in $\ell^p_A$, converging in norm to $v$.  Let $\lambda_n$ be the norming functional of $v_n$, and let $\lambda$ be that of $v$.  Then $\lambda_n$ converges in norm to $\lambda$ in $\ell^q_A$.
\end{Lemma}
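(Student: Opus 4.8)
The plan is to use the explicit description of norming functionals recorded in Section~\ref{prelim}: for nonzero $u \in \ell^p_A$ the unique norming functional is $u^{\langle p-1\rangle}/\|u\|_p^{p-1}$. Since $\|v_n\|_p = 1$ for all $n$ and $\|v\|_p = \lim_n \|v_n\|_p = 1$, this gives $\lambda_n = v_n^{\langle p-1\rangle}$ and $\lambda = v^{\langle p-1\rangle}$, so the statement is exactly that the (nonlinear) map $u \mapsto u^{\langle p-1\rangle}$ is continuous from $\ell^p_A$ to $\ell^q_A$ at $v$. Writing $v_n = \sum_k a_k^{(n)} z^k$ and $v = \sum_k a_k z^k$, we must show $\|\lambda_n - \lambda\|_q^q = \sum_k \big| (a_k^{(n)})^{\langle p-1\rangle} - a_k^{\langle p-1\rangle}\big|^q \to 0$.

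First I would record two elementary facts. Coordinatewise, $|a_k^{(n)} - a_k| \le \|v_n - v\|_p \to 0$, so $a_k^{(n)} \to a_k$ for each fixed $k$; since the scalar map $\alpha \mapsto \alpha^{\langle p-1\rangle} = |\alpha|^{p-2}\bar\alpha$ is continuous on all of $\mathbb{C}$ (continuity at $0$ uses $p>1$), the terms $f_n(k) := |(a_k^{(n)})^{\langle p-1\rangle} - a_k^{\langle p-1\rangle}|^q$ tend to $0$ for every $k$. Second, using $|\alpha^{\langle p-1\rangle}| = |\alpha|^{p-1}$, the triangle inequality, convexity of $t \mapsto t^q$, and the identity $(p-1)q = p$, one gets the pointwise majorant
\[
 f_n(k) \ \le\ \big(|a_k^{(n)}|^{p-1} + |a_k|^{p-1}\big)^q \ \le\ 2^{q-1}\big(|a_k^{(n)}|^{p} + |a_k|^{p}\big) \ =:\ g_n(k).
\]

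I would then close with a generalized dominated convergence argument (Pratt's lemma). Here $g_n(k) \to 2^{q-1}(|a_k|^p + |a_k|^p) = 2^q|a_k|^p =: g(k)$ for each $k$, while $\sum_k g_n(k) = 2^{q-1}(\|v_n\|_p^p + \|v\|_p^p) = 2^q = 2^q\|v\|_p^p = \sum_k g(k)$ for every $n$. So $f_n \to 0$ pointwise, $0\le f_n \le g_n$, $g_n \to g$ pointwise, and $\sum_k g_n(k) \to \sum_k g(k) < \infty$; Pratt's lemma yields $\sum_k f_n(k) \to 0$, i.e. $\|\lambda_n - \lambda\|_q \to 0$. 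The one point requiring care is that the natural dominating function $g_n$ depends on $n$, which is precisely what Pratt's lemma (or, alternatively, splitting $\sum_k f_n(k)$ into a finite head handled by coordinatewise convergence and a tail controlled uniformly in $n$ via the uniform smallness of $\sum_{k>K}|a_k^{(n)}|^p$ coming from $v_n\to v$ in $\ell^p$) is designed to handle; beyond this there is no real obstacle. I would also note the soft alternative: $\ell^p_A$ is uniformly smooth, being the dual of the uniformly convex space $\ell^q_A$, so its duality map is norm-to-norm continuous on the unit sphere and the conclusion is immediate — but the computational argument above fits the paper's use of the $\langle\,\cdot\,\rangle$-calculus more naturally.
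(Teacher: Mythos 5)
Your argument is correct, but it follows a genuinely different route from the paper. You exploit the explicit coefficientwise formula for the norming functional, $\lambda_n = v_n^{\langle p-1\rangle}$ and $\lambda = v^{\langle p-1\rangle}$ (valid since $\|v_n\|_p = 1$ and hence $\|v\|_p=1$), reducing the lemma to continuity of the duality map $u \mapsto u^{\langle p-1\rangle}$ from $\ell^p_A$ to $\ell^q_A$, which you establish by coordinatewise convergence together with the majorant $2^{q-1}\bigl(|a_k^{(n)}|^p + |a_k|^p\bigr)$ and Pratt's generalized dominated convergence (the identity $(p-1)q=p$ and the convergence $\sum_k g_n(k) \to \sum_k g(k) = 2^q < \infty$ are exactly what make this work); the head--tail splitting you mention is an equally valid substitute. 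The paper instead argues softly: Banach--Alaoglu gives a weak-$*$ convergent subsequence of the $\lambda_n$, the limit is shown to norm $v$ and hence, by smoothness, to equal $\lambda$; weak convergence plus $\|\lambda_{n_k}\|_q \to \|\lambda\|_q$ (trivial here, all norms being $1$) upgrades to norm convergence via the Radon--Riesz property of the uniformly convex space $\ell^q_A$, and a subsequence argument finishes. Your computational proof is more elementary and self-contained, meshes with the paper's $\langle\cdot\rangle$-calculus, and in fact proves norm-to-norm continuity of the duality map on all of $\ell^p_A$ (not just along this sequence); the paper's proof, by contrast, uses no coordinate structure and would carry over verbatim to any smooth, uniformly convex Banach space. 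Your closing remark about uniform smoothness is a fine third route, though it leans on a theorem (duality of uniform convexity and uniform smoothness, plus continuity of the duality map) heavier than anything the paper invokes.
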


\begin{proof}
By definition, all of the norming functionals have unit norm.  Thus by the Banach-Alaoglu theorem, there is a subsequence $\{v_{n_k}\}$ that converges weakly to some $\tilde{\lambda} \in \ell^q_A$ (by reflexivity, the weak and weak* topologies coincide).

If $\tilde{\lambda}$ is normed within $\epsilon$ by the unit vector $\tilde{v}$, then
\[
     \|\tilde{\lambda}\|_{q} \leq \epsilon + \langle \tilde{v}, \tilde{\lambda} \rangle \leq \epsilon + \lim_{k\rightarrow\infty}\big|\langle \tilde{v}, \lambda_{n_k} \rangle\big| \leq \epsilon + \|\tilde{v}\|_{p} \|\lambda_{n_k}\|_{q} = \epsilon + 1.
\]
Consequently, the norm of $\tilde{\lambda}$ is at most one.

Next, for each $k$ we have
\begin{align*}
     \langle v, \tilde{\lambda} \rangle &=   \langle v,  \tilde{\lambda}-\lambda_{n_k}  \rangle +  \langle v, \lambda_{n_k}  \rangle \\
          &=   \langle v,  \tilde{\lambda}-\lambda_{n_k}  \rangle +  \langle v-v_{n_k}, \lambda_{n_k}  \rangle +  \langle v_{n_k}, \lambda_{n_k}  \rangle\\
          &=   \langle v,  \tilde{\lambda}-\lambda_{n_k}  \rangle +  \langle v-v_{n_k}, \lambda_{n_k}  \rangle +  1.
\end{align*}
The first two terms can be made arbitrarily small by choosing $k$ sufficiently large.  This shows that $ \langle v, \tilde{\lambda} \rangle =1$.

But in fact from 
\[
       1 =  \langle v, \tilde{\lambda} \rangle \leq \|v\|_{p} \|\tilde{\lambda}\|_{q} = \|\tilde{\lambda}\|_{q} \leq 1,
\]
we see that $\tilde{\lambda}$ must have unit norm.  This forces $\tilde{\lambda}$ to be a norming functional for $v$.  Since $\ell^p_A$ is smooth, it can only be that $\tilde{\lambda} = \lambda$.  

Finally, from the weak convergence of $\lambda_{n_k}$ to $\lambda$, along with $\|\lambda_{n_k}\|_{q} \rightarrow \|\lambda\|_{q}$, we conclude that $\lambda_{n_k}$ converges to $\lambda$ in norm.  

This argument applies to any subsequence of $\{\lambda_n\}$, with the same limit $\lambda$, and so convergence in $\ell^q_A$ of the sequence itself is assured.
\end{proof}

We are now in a position to show that under broad circumstances, interpolation is well behaved with respect to finite approximation.

\begin{Theorem}\label{contresult}  Let the interpolating functions $f_{j,N}$ be defined as in \eqref{fsubnjay1} and \eqref{fsubnjay2}, and let $f_j$ be defined as in \eqref{fsubj1} and \eqref{fsubj2}.
    For any $j\geq 0$, $f_j$ exists if and only if $\lim_{N\rightarrow\infty}\|f_{j,N}\|_p < \infty$. In this case, $f_{j,N}$ converges in norm to $f_j$ in $\ell^p_A$, as $N$ increases without bound.   
\end{Theorem}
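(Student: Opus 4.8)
The plan is to prove the two directions separately, with the forward direction (existence of $f_j$) following easily from minimality, and the convergence statement carrying the real content. First I would observe that for each fixed $j$, the functions $f_{j,N}$ satisfy an \emph{increasing} sequence of interpolation constraints as $N$ grows: any $f$ satisfying \eqref{fsubnjay1}--\eqref{fsubnjay2} for $N+1$ also satisfies them for $N$. Hence $\|f_{j,N}\|_p$ is nondecreasing in $N$, and the limit $L_j := \lim_{N\to\infty}\|f_{j,N}\|_p$ exists in $(0,\infty]$. If $f_j$ exists, then $f_j$ is admissible for every truncated problem, so $\|f_{j,N}\|_p \le \|f_j\|_p$ for all $N$, giving $L_j \le \|f_j\|_p < \infty$. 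This disposes of one implication.

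For the converse, suppose $L_j < \infty$. Then $\{f_{j,N}\}_N$ is a bounded sequence in the reflexive space $\ell^p_A$, so by Banach--Alaoglu (weak and weak-$*$ topologies coinciding) some subsequence $f_{j,N_m}$ converges weakly to a limit $g \in \ell^p_A$. Since point evaluation at each $z_k$ is a bounded linear functional, weak convergence forces $(1-|z_k|^q)^{1/q} g(z_k) = 1$ if $k=j$ and $=0$ for all $k\ne j$ (each such constraint is active for all sufficiently large $N_m$). Thus $g$ is admissible for the infinite problem, so $f_j$ exists (being the minimum-norm such function), and moreover $\|f_j\|_p \le \|g\|_p \le \liminf_m \|f_{j,N_m}\|_p = L_j$. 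Combined with $L_j \le \|f_j\|_p$ from the first part, we get $\|f_j\|_p = L_j$.

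It remains to upgrade weak convergence to norm convergence of the \emph{full} sequence $f_{j,N} \to f_j$. Here is where I would use the geometry: $\ell^p_A$ is uniformly convex, hence has the Radon--Riesz (Kadec--Klee) property, so it suffices to show that every subsequence of $\{f_{j,N}\}$ has a further subsequence converging weakly to $f_j$ with norms converging to $\|f_j\|_p$. Given any subsequence, boundedness gives a weakly convergent further subsequence, its weak limit is admissible for the infinite problem by the argument above, and its norm is at most $L_j = \|f_j\|_p$ by weak lower semicontinuity of the norm; but since the norms $\|f_{j,N}\|_p$ increase to $L_j = \|f_j\|_p$, the liminf of norms along the subsequence equals $\|f_j\|_p$, forcing the weak limit to have norm exactly $\|f_j\|_p$. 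By uniqueness of the minimum-norm interpolant (strict convexity), that weak limit must be $f_j$ itself. Then Radon--Riesz gives norm convergence along that subsequence, and since this holds for arbitrary subsequences, the whole sequence converges in norm.

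The main obstacle is the last step: one must be careful that the weak subsequential limit is forced to equal $f_j$ and not merely some other admissible function. This is exactly where the monotonicity $\|f_{j,N}\|_p \uparrow \|f_j\|_p$ is essential — it pins down the norm of the weak limit — together with the uniqueness of the minimizer in a strictly convex space. Alternatively, one could invoke Lemma~\ref{onemoretool} by passing to normalized vectors $f_{j,N}/\|f_{j,N}\|_p$ and tracking the associated norming functionals, but the Radon--Riesz route seems cleaner and self-contained. A minor point to check along the way is that $f_j \ne 0$ (so that $L_j > 0$ and normalization makes sense), which is immediate from the constraint $(1-|z_j|^q)^{1/q} f_j(z_j) = 1$.
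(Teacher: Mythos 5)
Your proposal is correct, but takes a genuinely different route than the paper. The paper represents $\|f_{j,N}\|_p$ via duality as the reciprocal of an extremal distance in $\ell^q_A$, introduces the metric projections $g_{j,N}$ of $\Lambda_{z_j}/\|\Lambda_{z_j}\|_q$ onto $\mathrm{span}\{\Lambda_{z_k}\}_{k\neq j,\,k\le N}$, establishes the mutual-norming relations \eqref{normrel1}--\eqref{normrel2}, imports norm convergence of $g_{j,N}\to g_j$ from \cite[Prop.~4.8.3]{CMR}, and then transfers it back to $f_{j,N}\to f_j$ via Lemma~\ref{onemoretool} on continuity of the norming-functional map. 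You instead exploit the monotonicity of $\|f_{j,N}\|_p$ as constraints accumulate (which the paper never uses explicitly), weak compactness from reflexivity, weak lower semicontinuity of the norm, strict-convexity uniqueness of the minimum-norm interpolant, and the Radon--Riesz (Kadec--Klee) property of uniformly convex spaces to upgrade weak to norm convergence of the full sequence. Your argument is more elementary and self-contained --- it sidesteps Lemma~\ref{onemoretool} and metric projections entirely. What the paper's heavier route buys is that the dual objects $g_j$ and the identities \eqref{normrel1}--\eqref{normrel2} are exactly what is needed in the subsequent results of the section (the Proposition on convergence of the coefficients $\gamma_k^{(j,N)}$ of the dual extremal functions); your approach would need those constructions to be repeated separately. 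Both arguments are sound, and your observation that $\|f_{j,N}\|_p$ increases and is bounded below (by $1$, since $\|f_{j,N}\|_p \geq |f_{j,N}(z_j)|/\|\Lambda_{z_j}\|_q = 1$) makes the ``if and only if'' statement particularly transparent.
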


\begin{proof}  To prove this, and establish when the functions $\{f_j\}$ exist, we need the following apparatus from duality.

Fix $j=0$ and $N\geq 1$, and let $\mathscr{M}$ be the subspace of $\ell^p_A$ consisting of functions vanishing on $z_0, z_1,\ldots, z_N$ (and possibly elsewhere).   Then by a basic duality theorem
\begin{align*}
     \|f_{0,N}\|_p &=
     \mbox{dist}(f_{0,N},\mathscr{M}) \\
     &=  \sup \Bigg\{\frac{|\langle f_{0,N}, \psi \rangle |}{\|\psi\|_q}:\ \psi \in \mathscr{M}^{\perp} \Bigg\} \\
          &=  \sup \Bigg\{  \frac{|\langle f_{0,N}, b_0 \Lambda_{z_0} + b_1 \Lambda_{z_1} +\cdots+b_N \Lambda_{z_N} \rangle |}{\|b_0 \Lambda_{z_0} + b_1 \Lambda_{z_1} +\cdots+ b_N\Lambda_{z_N}\|_q} :\  (b_0, b_1,\ldots, b_N) \in \mathbb{C}^{N+1}\setminus \{0\} \Bigg\} \\
          &=   \sup \Bigg\{  \frac{ | b_0 f_{0,N}(z_0) + b_1 f_{0,N}(z_1) +\cdots + b_N f_{0,N}(z_N) |}{\|b_0 \Lambda_{z_0} + b_1 \Lambda_{z_1} +\cdots+b_N \Lambda_{z_N} \|_q} :\  (b_0, b_1,\ldots, b_N) \in \mathbb{C}^{N+1}\setminus \{0\} \Bigg\} \\
          &=   \sup \Bigg\{  \frac{ | b_0|/(1 - |z_0|^q)^{1/q}}{\|b_0 \Lambda_{z_0} + b_1 \Lambda_{z_1} +\cdots+b_N \Lambda_{z_N}\|_q} :\  (b_0, b_1,\ldots, b_N) \in \mathbb{C}^{N+1}\setminus \{0\}\Bigg\} \\
           &=   \sup \Bigg\{  \frac{ 1/(1 - |z_0|^q)^{1/q}}{\|\Lambda_{z_0} + b_1 \Lambda_{z_1} +\cdots+b_N\Lambda_{z_N}\|_q} :\  (b_0, b_1,\ldots, b_N) \in \mathbb{C}^{N+1}\setminus \{0\}\Bigg\} \\
           &=  \big[ \mbox{dist}(\Lambda_{z_0}/\|\Lambda_{z_0}\|_q, \mathscr{N}) \big]^{-1},
\end{align*}
where $\mathscr{N}$ is the span in $\ell^q_A$ of $\{\Lambda_{z_1}, \Lambda_{z_2},\ldots, \Lambda_{z_N}\}$.

Accordingly, let us define $g_{0,N}$ to be the unique metric projection of $\Lambda_{z_0}/\|\Lambda_{z_0}\|_q$ onto the span in $\ell^q_A$ of $\Lambda_{z_1}, \Lambda_{z_2},\ldots, \Lambda_{z_N}$.  We can carry out the analogous exercise to obtain $f_{j,N}$ and $g_{j,N}$ for all $N$ and $0\leq j \leq N$.

By direct calculation we see that $f_{j,N}/\|f_{j,N}\|_p$ is a norming functional for $g_{j,N}$, and likewise 
$g_{j,N}/\|g_{j,N}\|_q$ is a norming functional for $f_{j,N}$.  This further implies that
\begin{equation}\label{normrel1}
       \frac{f_{j,N}^{\langle p-1 \rangle} }{\|f_{j,N}\|_p^{ p-1 }} = \frac{g_{j,N}}{\|g_{j,N}\|_q}.
\end{equation}

Moreover, provided only that $f_j$ exists, we see that $g_j$ is analogously defined, with $f_j/\|f_j\|_p$ and $g_j/\|g_j\|_q$ being mutually norming, and
\begin{equation}\label{normrel2}
       \frac{f_{j}^{\langle p-1 \rangle} }{\|f_{j}\|_p^{ p-1}} = \frac{g_{j}}{\|g_{j}\|_q}
\end{equation}
as well.

Notice that as $N$ increases $g_{j,N}$ is a metric projection onto successively larger subspaces of $\ell^q_A$.
By \cite[Proposition 4.8.3]{CMR}, the metric projection is well behaved in the limit, and we have $g_{j,N}$ convergent in norm to $g_j$ in $\ell^q$.

Apply  Lemma \ref{onemoretool}, in conjunction with the norm relations \eqref{normrel1} and \eqref{normrel2}.  The only way $f_j$ can fail to exist is if $g_{j,N}$ tends to zero.   That implies $\lim_{N\rightarrow\infty}\|f_{j,N}\|_p = \infty$.
\end{proof}

\begin{Corollary}
 Let the functions $f_j$ be defined as in \eqref{fsubj1} and \eqref{fsubj2}.
  For $f_j$ to exist for all $j\geq 0$ it is necessary and sufficient that
  \[
        \mbox{\rm dist}_{\ell^q_A}\big(\Lambda_{z_j}, \mbox{\rm span}\{\Lambda_{z_k}\}_{k\neq j}\big) > 0
  \]
  for all $j$.
\end{Corollary}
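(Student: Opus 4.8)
The plan is to read the statement off Theorem~\ref{contresult} together with the distance computation carried out inside its proof. That computation shows, for $j=0$, that $\|f_{0,N}\|_p=[\operatorname{dist}_{\ell^q_A}(\Lambda_{z_0}/\|\Lambda_{z_0}\|_q,\mathscr{N})]^{-1}$ with $\mathscr{N}=\operatorname{span}\{\Lambda_{z_1},\ldots,\Lambda_{z_N}\}$, and running the identical duality argument with the indices permuted (as is already indicated there) gives, for every $N$ and every $j$ with $0\leq j\leq N$,
\[
   \|f_{j,N}\|_p = \Big[\operatorname{dist}_{\ell^q_A}\Big(\frac{\Lambda_{z_j}}{\|\Lambda_{z_j}\|_q},\ \operatorname{span}\{\Lambda_{z_k}: 0\leq k\leq N,\ k\neq j\}\Big)\Big]^{-1}.
\]

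Next I would pass to the limit $N\to\infty$. Write $\mathscr{N}_{j,N}:=\operatorname{span}\{\Lambda_{z_k}: 0\leq k\leq N,\ k\neq j\}$; these subspaces increase with $N$, so $N\mapsto\operatorname{dist}_{\ell^q_A}(\Lambda_{z_j}/\|\Lambda_{z_j}\|_q,\mathscr{N}_{j,N})$ is nonincreasing and hence converges, to its infimum. Since $\operatorname{dist}(x,S)=\operatorname{dist}(x,\overline{S})$ and $\operatorname{dist}(x,\bigcup_N\mathscr{N}_{j,N})=\inf_N\operatorname{dist}(x,\mathscr{N}_{j,N})$, that infimum equals $\operatorname{dist}_{\ell^q_A}(\Lambda_{z_j}/\|\Lambda_{z_j}\|_q,\operatorname{span}\{\Lambda_{z_k}\}_{k\neq j})$ (the non-closed span and its closure give the same distance). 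Thus
\[
   \lim_{N\to\infty}\|f_{j,N}\|_p = \Big[\operatorname{dist}_{\ell^q_A}\Big(\frac{\Lambda_{z_j}}{\|\Lambda_{z_j}\|_q},\ \operatorname{span}\{\Lambda_{z_k}\}_{k\neq j}\Big)\Big]^{-1},
\]
with the right-hand side understood as $+\infty$ when the distance vanishes.

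Finally I would invoke Theorem~\ref{contresult}, which says $f_j$ exists if and only if $\lim_{N\to\infty}\|f_{j,N}\|_p<\infty$. By the last display this holds exactly when $\operatorname{dist}_{\ell^q_A}(\Lambda_{z_j}/\|\Lambda_{z_j}\|_q,\operatorname{span}\{\Lambda_{z_k}\}_{k\neq j})>0$, and since $\|\Lambda_{z_j}\|_q$ is a strictly positive scalar this is equivalent to $\operatorname{dist}_{\ell^q_A}(\Lambda_{z_j},\operatorname{span}\{\Lambda_{z_k}\}_{k\neq j})>0$; requiring this for every $j$ gives the claim. No step here is genuinely hard: the only point needing a little care is the interchange of the limit with the distance functional --- i.e.\ that the distance to the increasing union of the finite spans is the limit of the distances, which uses the monotonicity of $\operatorname{dist}(x,\cdot)$ under inclusion and $\operatorname{dist}(x,S)=\operatorname{dist}(x,\overline{S})$ --- together with bookkeeping for the degenerate case in which this distance is $0$, which is precisely the case $\|f_{j,N}\|_p\to\infty$, i.e.\ the failure of $f_j$ to exist.
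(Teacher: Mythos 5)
Your proposal is correct and follows essentially the paper's own route: the corollary is meant to be read off from Theorem \ref{contresult} together with the duality identity $\|f_{j,N}\|_p=\big[\operatorname{dist}_{\ell^q_A}\big(\Lambda_{z_j}/\|\Lambda_{z_j}\|_q,\ \operatorname{span}\{\Lambda_{z_k}:k\leq N,\ k\neq j\}\big)\big]^{-1}$ established in that theorem's proof. Your explicit limit step (the distance to the increasing union of finite spans is the monotone limit of the finite-$N$ distances, and scaling by the positive constant $\|\Lambda_{z_j}\|_q$ is harmless) simply spells out the passage to $N\to\infty$ that the paper handles via the convergence of the dual extremal elements $g_{j,N}$.
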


The condition in the Corollary is called ``topologically free'' in  \cite{AM}.  It goes by the name ``minimality'' in other works, for example \cite{CMPX}.

The next assertion connects two conditions encountered in our analysis.

\begin{Proposition}
Let $1<p<\infty$, $1/p+1/q =1$, and let $\{z_k\}$ be a sequence of distinct points in the disk $\mathbb{D}$.  If the condition (LB) holds, then so does the separation condition
\[
        \inf \left\{ \left\|\frac{\Lambda_{z_j}}{\| \Lambda_{z_j} \|_q} - \sum_{k\neq j} c_k \frac{\Lambda_{z_k}}{\| \Lambda_{z_k} \|_q}\right\|_q :\ j\geq 0,\ c_k \in \mathbb{C}    \right\}  > 0.
\]
\end{Proposition}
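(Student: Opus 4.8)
The plan is to show the contrapositive of a quantitative failure: if the separation infimum equals zero, then (LB) fails. So suppose the infimum is $0$. Then for every $\epsilon>0$ there is an index $j$ and scalars $(c_k)_{k\neq j}$ such that
\[
   \left\| \frac{\Lambda_{z_j}}{\|\Lambda_{z_j}\|_q} - \sum_{k\neq j} c_k \frac{\Lambda_{z_k}}{\|\Lambda_{z_k}\|_q} \right\|_q < \epsilon .
\]
The natural move is to rewrite this with the coefficients that appear in (LB): set $a_j = 1$ (attached to the $j$-th normalized kernel) and $a_k = -c_k$ for $k\neq j$, so that the displayed vector is $\sum_k a_k \Lambda_{z_k}/\|\Lambda_{z_k}\|_q$. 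The numerator of the (LB)-quotient for this coefficient sequence is $\big\|\sum_k a_k \Lambda_{z_k}/\|\Lambda_{z_k}\|_q\big\|_q < \epsilon$, while the denominator is $\big(\sum_k |a_k|^q\big)^{1/q} = \big(1 + \sum_{k\neq j}|c_k|^q\big)^{1/q} \geq 1$. (One should note the sums are genuinely finite, since the infimum over the separation quantity is attained, or approached, using only finitely many nonzero $c_k$; alternatively one truncates and passes to the limit, using that $\Lambda_{z_k}\in\ell^q_A$ so all partial sums make sense.) Hence the (LB)-quotient for this $(a_k)$ is strictly less than $\epsilon$. Since $\epsilon>0$ was arbitrary, the infimum of the (LB)-quotient over all nonzero coefficient sequences is $0$, so no constant $C_1>0$ can work, i.e.\ (LB) fails.

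To make the bookkeeping clean I would actually phrase it as: substitute $c_k = a_k \|\Lambda_{z_k}\|_q$ in (LB), exactly as in the proof of Theorem \ref{interprieszbas}, so that (LB) becomes the statement that there is $C_1>0$ with
\[
   C_1 \leq \frac{\big\| \sum_k a_k \Lambda_{z_k}/\|\Lambda_{z_k}\|_q \big\|_q}{\big(\sum_k |a_k|^q\big)^{1/q}}
\]
for all nonzero $(a_k)$. Then the separation quantity in the Proposition is precisely the infimum of the numerator over those $(a_k)$ having $a_j=1$ for some $j$ and the remaining entries arbitrary; since such $(a_k)$ have denominator $\geq 1$, a lower bound $C_1$ on the quotient forces the separation infimum to be $\geq C_1 > 0$. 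This is really just the observation that restricting to normalized coefficient sequences with one coordinate pinned to $1$ only enlarges the quotient relative to its infimum.

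The argument is short and the only genuinely delicate point is the passage between finite and infinite sums: the infimum defining the separation condition is taken over (finitely supported) scalar sequences $(c_k)_{k\neq j}$, whereas (LB) as stated quantifies over all sequences. I would handle this by noting that for any $(c_k)$ with $\sum_{k\neq j}|c_k|^q<\infty$ the vector $\sum_{k\neq j} c_k \Lambda_{z_k}/\|\Lambda_{z_k}\|_q$ converges in $\ell^q_A$ (because the normalized kernels have norm one and, if (LB) held, they would be a Riesz basis of their span — but even without that, one simply works with finitely supported $(c_k)$, which is enough to witness the infimum being zero, and finitely supported sequences are certainly admissible in (LB)). So no real obstacle arises; the proof is essentially a reindexing plus the trivial bound $\big(1+\sum|c_k|^q\big)^{1/q}\geq 1$ on the denominator.
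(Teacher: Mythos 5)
Your argument is correct and is essentially the paper's own proof: the paper normalizes by $|c_j|$ and then uses $|c_j| \leq \big(\sum_k |c_k|^q\big)^{1/q}$ to compare with the (LB) quotient, which is exactly your step of pinning $a_j=1$ and noting the denominator is at least $1$. The finiteness worry is immaterial, since finitely supported coefficient sequences suffice and are admissible in (LB).
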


\begin{proof}
      \begin{align*}
          & \inf \left\{ \left\|\frac{\Lambda_{z_j}}{\| \Lambda_{z_j} \|_q} - \sum_{k\neq j} c_k \frac{\Lambda_{z_k}}{\| \Lambda_{z_k} \|_q}\right\|_q :\ j\geq 0,\ c_k \in \mathbb{C}    \right\}\\[.25cm]
               = &    \inf  \left\{ \frac{\left\|c_j\frac{\Lambda_{z_j}}{\| \Lambda_{z_j} \|_q} - \sum_{k\neq j} c_k \frac{\Lambda_{z_k}}{\| \Lambda_{z_k} \|_q}\right\|_q}{|c_j|} :\ j\geq 0,\ c_k \in \mathbb{C},\ c_j \neq 0    \right\}  \\[.25cm]
               \geq &  \inf \left\{ \frac{\left\|c_j\frac{\Lambda_{z_j}}{\| \Lambda_{z_j} \|_q} - \sum_{k\neq j} c_k \frac{\Lambda_{z_k}}{\| \Lambda_{z_k} \|_q}\right\|_q}{\big(\sum_{k} |c_k|^q\big)^{1/q}} :\   c \neq 0    \right\}  \\[.25cm]
      > & 0,
       \end{align*}
       the last inequality being equivalent to (LB).
\end{proof}

\bigskip

Next, we will discuss the convergence of the coefficients of the dual extremal functions. 

For $N \in \mathbb{N}$ and $0 \leq j \leq N$, and with $g_{j,N}$  defined as in the proof of Theorem \ref{contresult}, let us write
\[
     g_{j,N} = \frac{\Lambda_{j}}{\|\Lambda_j\|_q}  -  \sum_{k\neq j} \gamma_{k}^{(j,N)} \frac{\Lambda_{k}}{\|\Lambda_k\|_q},
\]
with the understanding that $\gamma^{(j,N)}_j = 1$ and $\gamma^{(j,N)}_k = 0$ whenever $k>N$.

\begin{Proposition}
     Suppose that $1<p<\infty$, and $1/p+1/q = 1$.  Let a sequence of distinct  points $Z = \{z_0, z_1, z_2,\ldots\}$ be given. 
     If the condition
\[
    M_j :=   \mbox{\rm dist}(\Lambda_{z_j}/\|\Lambda_{z_j}\|_q, \mbox{\rm span}\{\Lambda_{z_k}\}_{k \neq j})   > 0
\]
holds for all $j$, then $\lim_{N\rightarrow\infty} \gamma_k^{(j,N)}$ exists for all $j$ and $k$.
\end{Proposition}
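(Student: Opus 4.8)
The plan is to exploit the fact, established in the proof of Theorem \ref{contresult}, that $g_{j,N}$ is the metric projection of $\Lambda_{z_j}/\|\Lambda_{z_j}\|_q$ onto the span $\mathscr{N}_{j,N} := \mathrm{span}\{\Lambda_{z_k}/\|\Lambda_{z_k}\|_q : k \neq j, \ k \leq N\}$, and that by \cite[Proposition 4.8.3]{CMR} these projections converge in norm: $g_{j,N} \to g_j$ in $\ell^q_A$ as $N \to \infty$, where $g_j$ is the metric projection of $\Lambda_{z_j}/\|\Lambda_{z_j}\|_q$ onto the closed span $\overline{\mathrm{span}}\{\Lambda_{z_k}/\|\Lambda_{z_k}\|_q : k \neq j\}$ (the hypothesis $M_j > 0$ guarantees $g_j$ is a well-defined element with $\|g_j\|_q < 1$, and in particular the distance is positive so this is genuinely a projection onto a proper-distance subspace). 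Norm convergence of $g_{j,N}$ to $g_j$ is the engine; the task is to upgrade it to coordinatewise convergence of the coefficients $\gamma_k^{(j,N)}$ appearing in the expansion of $g_{j,N}$ against the normalized kernels.

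The key step is to convert norm convergence of the vectors $g_{j,N}$ into convergence of their coefficients. Fix $j$ and $k$ with $k \neq j$. I would test against a functional in $\ell^p_A$ that isolates the coefficient $\gamma_k^{(j,N)}$: because the hypothesis $M_j > 0$ holds (equivalently, the family $\{\Lambda_{z_m}/\|\Lambda_{z_m}\|_q\}$ is minimal, as noted in the Corollary), there exists a biorthogonal system, i.e.\ for each $k$ there is $\phi_k \in \ell^p_A$ with $\langle \Lambda_{z_m}/\|\Lambda_{z_m}\|_q, \phi_k \rangle = \delta_{m,k}$; here $\phi_k = f_k^{\langle p-1\rangle}/(\text{normalization})$ works, with $f_k$ the minimal interpolating function from \eqref{fsubj1}–\eqref{fsubj2}, which exists precisely because $M_k > 0$. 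Pairing $g_{j,N}$ with $\phi_k$ and using that $g_{j,N}$ is a finite linear combination $\frac{\Lambda_{z_j}}{\|\Lambda_{z_j}\|_q} - \sum_{m\neq j} \gamma_m^{(j,N)}\frac{\Lambda_{z_m}}{\|\Lambda_{z_m}\|_q}$, we get $\langle g_{j,N}, \phi_k\rangle = -\gamma_k^{(j,N)}$ (for $k \neq j$; the case $k = j$ is trivial since $\gamma_j^{(j,N)} \equiv 1$). Since $g_{j,N} \to g_j$ in $\ell^q_A$ and $\phi_k \in \ell^p_A$ is fixed, the pairing converges, so $\gamma_k^{(j,N)} \to -\langle g_j, \phi_k\rangle =: \gamma_k^{(j)}$, which is the desired limit.

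The main obstacle is verifying the existence of the biorthogonal functionals $\phi_k$, which is exactly where the hypothesis $M_j > 0$ for all $j$ is used in full force — we need minimality at \emph{every} index $k$, not just at the fixed $j$, since the coefficient we are extracting is indexed by $k$. This is handled by the Corollary following Theorem \ref{contresult}: the condition $\mathrm{dist}_{\ell^q_A}(\Lambda_{z_k}, \mathrm{span}\{\Lambda_{z_m}\}_{m\neq k}) > 0$ for all $k$ is equivalent to the existence of all the minimal functions $f_k$, and $g_k/\|g_k\|_q$ being the norming functional of $f_k$ gives, via the relation $\langle \Lambda_{z_m}, f_k^{\langle p-1\rangle}\rangle = f_k^{\langle p-1\rangle}(\cdots)$ type identities and the defining conditions \eqref{fsubj1}–\eqref{fsubj2}, that $f_k^{\langle p-1\rangle}$ (suitably normalized) is biorthogonal to the normalized kernels. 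A secondary technical point is confirming that one may legitimately apply \cite[Proposition 4.8.3]{CMR} here — one must check the cited proposition's hypotheses (an increasing chain of subspaces, uniform convexity of $\ell^q_A$, and the limiting subspace being the closed union), all of which hold in this setting. Once those are in place the argument is a short continuity-of-pairing computation.
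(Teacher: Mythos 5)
Your proof is correct, but it takes a genuinely different route from the paper. The paper's own argument is quantitative: for $M<N$ it notes $g_{j,N}\perp_q(g_{j,N}-g_{j,M})$ and invokes the lower Pythagorean inequality \eqref{lowerpyth} of Theorem \ref{MorePythProp} to get
\[
\|g_{j,M}\|_q^r \;\geq\; \|g_{j,N}\|_q^r + K\,\big|\gamma_k^{(j,M)}-\gamma_k^{(j,N)}\big|^r M_k^r,
\]
where the factor $M_k$ enters exactly as in your observation that minimality is needed at the index $k$, not just at $j$; since the norms $\|g_{j,N}\|_q$ converge, this makes $\{\gamma_k^{(j,N)}\}_N$ Cauchy. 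You instead use minimality to produce a biorthogonal system and extract the coefficient by pairing: $\langle g_{j,N},\phi_k\rangle=-\gamma_k^{(j,N)}$, and then the norm convergence $g_{j,N}\to g_j$ (the same fact from the proof of Theorem \ref{contresult}, via \cite[Proposition 4.8.3]{CMR}, that the paper also uses) gives convergence of the pairings. Your route avoids the weak parallelogram/Pythagorean machinery entirely and has the bonus of an explicit formula for the limit, $\gamma_k^{(j)}=-\langle g_j,\phi_k\rangle$; the paper's route buys a quantitative modulus of Cauchyness, $|\gamma_k^{(j,M)}-\gamma_k^{(j,N)}|^r\leq(\|g_{j,M}\|_q^r-\|g_{j,N}\|_q^r)/(K M_k^r)$, and needs only convergence of the scalars $\|g_{j,N}\|_q$ (automatic, since they decrease and are bounded below by $M_j$) rather than convergence of the vectors. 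One correction to a detail in your write-up: the biorthogonal element is not $f_k^{\langle p-1\rangle}$ (which lies in $\ell^q_A$ and so is not a functional on $\ell^q_A$ under the paper's pairing) but simply $f_k$ itself, since \eqref{fsubj1}--\eqref{fsubj2} give $\langle f_k,\Lambda_{z_m}/\|\Lambda_{z_m}\|_q\rangle=(1-|z_m|^q)^{1/q}f_k(z_m)=\delta_{mk}$; alternatively, one can bypass the $f_k$ altogether and obtain $\phi_k\in\ell^p_A$ with $\|\phi_k\|_p=1/M_k$ directly from Hahn--Banach using the hypothesis $M_k>0$. With that substitution your argument is complete.
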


\begin{proof}  The claim is trivial when $j=k$, so we assume otherwise.
Let $r$ and $K$ be suitable Pythagorean parameters for $\ell^q_A$ from Theorem \ref{MorePythProp}, line \eqref{lowerpyth} (see also \cite{CMR}, page 58).  Suppose that $1 \leq M < N$.  Then $g_{j,M} - g_{j,N}$ lies in the span of $\{\Lambda_k:\ k \neq j,\ 0\leq k \leq N\}$.   Thus by the extremal property of $g_{j,N}$,
\[
         g_{j,N}  \perp_q  (g_{j,N} - g_{j,M}).
\]

Consequently, inequality \eqref{lowerpyth} gives
\begin{align*}
     \|g_{j,M}\|^r_q   &\geq  \|g_{j,N}\|^r_q + K\|g_{j,M} - g_{j,N}\|^r_q \\
         &\geq  \|g_{j,N}\|^r_q + K\Big\| \sum_{m\neq j} (\gamma_m^{(j,M)} - \gamma_m^{(j,N)})\Lambda_{z_m}/\| \Lambda_{z_m}\|_q \Big\|^r_q \\
         &\geq  \|g_{j,N}\|^r_q + K |\gamma_k^{(j,M)} - \gamma_k^{(j,N)}|^r \cdot M_k^r.
\end{align*}

We already know that $g_{j,N} \rightarrow g_j$ in norm as $N\rightarrow\infty$.  This forces $\{\gamma_k^{(j,N)}\}_{N=1}^{\infty}$ to be a Cauchy sequence.
\end{proof}

Let us give the name $\gamma^{(j)}_k$ to  $\lim_{N\rightarrow\infty} \gamma_k^{(j,N)}$.   We have thus shown that $g_j$ has the representation
\[
       g_{j}  \sim  \frac{\Lambda_{z_j}}{\|\Lambda_{z_j}\|_q} - \sum_{k\neq j}  \gamma^{(j)}_k \frac{\Lambda_{z_k}}{\|\Lambda_{z_j}\|_q},
\]
with convergence in the sense that the representations for $g_{j,N}$ converge to $g_j$ in $\ell^q_A$.


\section{Matrix Analysis}\label{matrix-analysis}

Next we represent the information obtained thus far about the extremal functions $\{f_j\}$ and their respective dual extremal functions $\{g_j\}$
 in terms of matrices, and uncover another criterion for a universal interpolation sequence for $\ell^p_A$.  We also obtain 
  interesting nonlinear operators relating the associated extremal vectors when $p \neq 2$, extending the notion of a Gramian matrix.

 With the extremal functions $f_j$ as previously defined in \eqref{fsubj1} and \eqref{fsubj2}, let their coefficients by expressed via
 \[
       f_j(z)  :=  \sum_{k=0}^{\infty} \phi_{j,k} z^k.
 \]
 
 Define the matrix 
 \[
      \Phi :=  \left[\begin{array}{cccc} \phi_{0,0}  & \phi_{0,1}  & \phi_{0,2}  & \cdots \\   
           \phi_{1,0}  & \phi_{1,1}  & \phi_{1,2}  & \cdots \\   
             \phi_{2,0}  & \phi_{2,1}  & \phi_{2,2}  & \cdots \\  
             \vdots & \vdots & \vdots & \ddots 
             \end{array}\right]
 \]
 and the matrix $\Psi$ by
 \[
       \big[\Psi\big]_{j,k} :=   z_k^j(1 - |z_k|^q)^{1/q},
\]
where the indices $j$ and $k$ vary from 0 to $\infty$.

Thus the $j$the row of $\Phi$ consists of the coefficients of $f_j$, and the $k$th column of $\Psi$ is populated by the coefficients of the normalized kernel function $\Lambda_{z_k}/\|\Lambda_{z_k}\|_q$.    Notice also that the matrix $\Psi^*\Psi$ is the familiar Gramian matrix in the $p=2$ case, where $\Psi^*$ stands for the conjugate transpose of $\Psi$.  In what follows, we write $\Psi^T$ for the transpose of the matrix $\Psi$, without conjugation.

\begin{Theorem}  Let $1<p<\infty$, and $1/p+1/q =1$.  Suppose that $Z = \{z_j\}$ is a sequence of distinct  points of $\mathbb{D}$.
  If the condition
  \[
        \mbox{\rm dist}(\Lambda_{z_j}/\|\Lambda_{z_j}\|_q, \mbox{\rm span}\{\Lambda_{z_k}\}_{k \neq j})   > 0
  \]
holds for all $j$,  then the following are equivalent.
\begin{itemize}
    \item[(i)]   $Z$ is a universal interpolating sequence.
    \item[(ii)]   The matrix $\Psi$ represents a bounded operator from $\ell^q$ into $\ell^q$ that is bounded below.
    \item[(iii)]   The matrix $\Phi$ represents a bounded invertible operator from $\ell^q$ onto $\ell^q/\mbox{\rm ker} \Psi^T$.
    \end{itemize}
In this case, $I = \Phi \Psi$, and for any $W = \{w_j\}\in \ell^p$ a function $f \in \ell^p_A$ that interpolates to the target $W$ is given by 
$f = \Phi^T W$.
\end{Theorem}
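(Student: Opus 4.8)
The plan is to reduce this to the duality machinery already developed for $\tilde T$ in the proof of Theorem~\ref{interprieszbas}, reading off the matrix identities from the coefficient descriptions of the $f_j$ and the normalized kernels. First I would recall the operator $T f = (f(z_k)/\|\Lambda_{z_k}\|_q)_{k\ge0}$ and its induced quotient map $\tilde T\colon \ell^p_A/\mathscr K \to \ell^p$; universality is exactly surjectivity of $T$, equivalently invertibility of $\tilde T$. From the proof of Theorem~\ref{interprieszbas}, the adjoint $\tilde T^{\ast}$ sends $(a_k)$ to $(a_k \Lambda_{z_k}/\|\Lambda_{z_k}\|_q)$ in $(\ell^p_A/\mathscr K)^{\ast}=\mathscr K^{\perp}$. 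Writing everything in Maclaurin coefficients, $\Lambda_{z_k}/\|\Lambda_{z_k}\|_q$ has $j$th coefficient $z_k^j(1-|z_k|^q)^{1/q}$, so this adjoint is precisely the operator on $\ell^q$ with matrix $\Psi$. Thus $\tilde T^{\ast}=\Psi$ (as a map $\ell^q\to\mathscr K^{\perp}\subseteq\ell^q_A$), and by standard Banach space duality $\tilde T$ is bounded and invertible if and only if $\Psi$ is bounded and bounded below; note $\ker\Psi^T$ is the annihilator description of $\mathscr K$, so $\overline{\operatorname{ran}\Psi}=\mathscr K^{\perp}$ automatically. This gives (i)$\iff$(ii).

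Next, for (i)$\iff$(iii), I would identify $\Phi$ with the solution operator. Recall $f_j$ is the minimal-norm solution of $(1-|z_k|^q)^{1/q}f_j(z_k)=\delta_{jk}$; its $k$th coefficient is $\phi_{j,k}$, so the $j$th row of $\Phi$ is the coefficient vector of $f_j$. For $W=(w_j)\in\ell^p$ the function $f=\Phi^T W$ has coefficient vector $\sum_j w_j (\text{$j$th row of }\Phi)=\sum_j w_j f_j$, at least formally; evaluating and using $(1-|z_k|^q)^{1/q}f_j(z_k)=\delta_{jk}$ gives $(1-|z_k|^q)^{1/q}f(z_k)=w_k$, so $f$ interpolates $W$. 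The hypothesis that the kernels are minimal (the distance condition) is exactly what guarantees each $f_j$ exists, via the Corollary to Theorem~\ref{contresult}. Boundedness of $\Phi\colon\ell^q\to\ell^q/\ker\Psi^T$ together with its invertibility is then the dual statement to (ii): indeed $\Phi$ should be, up to the identification $\mathscr K^{\perp}\cong(\ell^p_A/\mathscr K)^{\ast}$ and a transpose, the inverse of $\tilde T$ composed with the quotient. Concretely I would verify the algebraic identity $\Phi\Psi = I$ by computing the $(i,k)$ entry: $\sum_{j}\phi_{i,j}\,z_k^j(1-|z_k|^q)^{1/q} = (1-|z_k|^q)^{1/q} f_i(z_k)=\delta_{ik}$. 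Once $\Phi\Psi=I$ is in hand, (ii)$\Rightarrow$(iii) is formal (a left inverse that is bounded below forces the companion to be invertible onto the appropriate quotient), and conversely (iii) with $\Phi\Psi=I$ forces $\Psi$ bounded below.

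I would organize the write-up as: (a) set up $T$, $\tilde T$, $\tilde T^{\ast}$ and identify $\tilde T^{\ast}$ with $\Psi$; (b) invoke duality to get (i)$\iff$(ii); (c) prove the identity $\Phi\Psi=I$ by the coefficient computation above; (d) use it plus the existence of the $f_j$ (guaranteed by the distance hypothesis) to pass between (ii) and (iii); (e) check $f=\Phi^T W$ interpolates $W$ by the same coefficient computation applied to $\sum_j w_j f_j$.

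The main obstacle I anticipate is the bookkeeping around convergence: that $\Phi^T W=\sum_j w_j f_j$ converges in $\ell^p_A$ for $W\in\ell^p$, and that the formal matrix products $\Phi\Psi$ and $\Psi^T\Phi^T$ are justified entrywise and as operator compositions, rather than merely as finite truncations. Under hypothesis (iii), $\Phi$ bounded on $\ell^q$ gives (by duality/transpose) that $\Phi^T$ is bounded $\ell^p\to\ell^p$, which legitimizes the series; but one must be careful that the target space of $\Phi$ is the quotient $\ell^q/\ker\Psi^T$, so ``$\Phi^T$'' really means the adjoint of that quotient map, landing in $(\ker\Psi^T)^{\perp}\subseteq\ell^p$, and one should check this subspace is exactly $\ell^p_A$ (equivalently that the coefficient sequences of elements of $\ell^p_A$ are those annihilating $\ker\Psi^T$, which is automatic since every element of $\ell^p$ is the coefficient sequence of some $\ell^p_A$ function). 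Handling this identification cleanly — rather than drowning in index chasing — is where the care is needed.
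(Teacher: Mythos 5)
Your argument follows essentially the same route as the paper: (i)$\iff$(ii) via the duality machinery behind Theorem \ref{interprieszbas} (your identification of $\tilde T^{\ast}$ with $\Psi$ is exactly what makes the Riesz-system condition the same as $\Psi$ being bounded and bounded below), the entrywise verification of $\Phi\Psi=I$, and $f=\Phi^{T}W$ to pass between (iii) and (i). The convergence bookkeeping you flag is treated no more explicitly in the paper's own terse proof, so your proposal is, if anything, slightly more careful on those identifications.
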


\begin{proof}  The separation condition ensures that each $f_j$ exists, and hence the matrix $\Phi$ exists.  Previously, in Theorem \ref{interprieszbas}, it was established that (i) is equivalent to $\{\Lambda_{z_j}/\|\Lambda_{z_j}\|_q\}_{j=0}^{\infty}$ being a Riesz system; this, in turn, says that $\Phi$ is a bounded, invertible operator from $\ell^q$ to its range in $\ell^q$; that is equivalent to (ii).  If (iii) holds, then for any $W = \{w_j\}\in \ell^p$ a function $f \in \ell^p_A$ that interpolates to the target $W$ is given by 
$f = \Phi^T W$, thus implying (i).  Finally, in any case the matrix equation $I = \Phi \Psi$ holds algebraically.  If (ii) holds as well, then $\Phi^T = (\Psi^T)^{-1}$ (viewed as operating on the range of $\Psi$), and we have (iii).
\end{proof}

This enables us to solve the interpolation problem in terms of $\{f_j\}$, ensuring that the interpolating function can be represented as
\[
    f(z) :=  \sum_{k=0}^{\infty} \frac{f_k(z) w_k}{(1 - |z_k|^q)^{1/q}},
\]
in the sense that the matrix operation supplies the coefficients of $f \in \ell^p_A$.
Vinogradov  \cite{Vino4} obtained similar results under a different set of assumptions on $Z$.

Next, we also obtain interesting non-linear conditions for the associated extremal vectors.

The remainder of this section comprises a set of results that in principle enable us to test an infinite sequence $\{z_0, z_1, z_2,\ldots\}$ for the condition (UB) based on the finite fragments $\{z_0, z_1,\ldots,z_n\}$.

Let distinct  points $\{z_0, z_1,\ldots, z_n\}$ of the disk $\mathbb{D}$ be chosen.  In this situation the matrix $\Psi$ is given by
\begin{equation}\label{ducksauce}
     \Psi = \left[  \begin{array}{cccc}  (1 - |z_0|^q)^{1/q} & (1 - |z_1|^q)^{1/q} & \cdots & (1 - |z_n|^q)^{1/q} \\
               z_0(1 - |z_0|^q)^{1/q} & z_1(1 - |z_1|^q)^{1/q} & \cdots & z_n(1 - |z_n|^q)^{1/q} \\
               z_0^2(1 - |z_0|^q)^{1/q} & z_1^2(1 - |z_1|^q)^{1/q} & \cdots & z_n^2(1 - |z_n|^q)^{1/q} \\  
               \vdots & \vdots & \ddots & \vdots              
               \end{array}      \right].
\end{equation}
We denote its operator norm from $\ell^q$ to $\ell^q(\{0, 1,\ldots,n\})$ by the usual $\|\Psi\|$.

\begin{Lemma}\label{previouslemma}
     Let distinct  points $\{z_0, z_1,\ldots, z_n\}$ of the disk $\mathbb{D}$ be chosen.  With the matrix $\Psi$ defined as in \eqref{ducksauce}, there exist unit vectors $\tilde{a} \in \ell^q(\{0, 1,\ldots,n\})$ and 
     $\tilde{b} \in \ell^q_A$ such that 
     \begin{align*}
         \| \Psi \tilde{a}\|_q  &=  \|\Psi\|\\
         \| \Psi^T \tilde{b}\|_q  &=  \|\Psi\| \\
         \langle \tilde{b},\Psi \tilde{a} \rangle &= \|\Psi\|.
     \end{align*}
\end{Lemma}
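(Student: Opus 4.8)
The plan is to realize this as a standard compactness argument for attaining the operator norm of a finite-rank (hence compact) operator between reflexive spaces, together with a duality observation that transfers the extremal vector between $\Psi$ and $\Psi^T$. The matrix $\Psi$ in \eqref{ducksauce} has only $n+1$ columns, so it defines a bounded operator $\Psi\colon \ell^q \to \ell^q(\{0,1,\ldots,n\})$ with finite-dimensional range; in particular it factors through $\mathbb{C}^{n+1}$ and is compact. Since its domain $\ell^q$ is reflexive, the closed unit ball of $\ell^q$ is weakly compact, and a compact operator maps weakly convergent sequences to norm-convergent ones; hence a norm-maximizing sequence of unit vectors $a^{(m)}$ with $\|\Psi a^{(m)}\|_q \to \|\Psi\|$ has a weakly convergent subsequence whose image converges in norm. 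The weak limit $\tilde a$ lies in the unit ball, and $\|\Psi\tilde a\|_q = \lim \|\Psi a^{(m)}\|_q = \|\Psi\|$; it must in fact be a unit vector, since otherwise $\Psi(\tilde a/\|\tilde a\|_q)$ would have norm exceeding $\|\Psi\|$. This produces the unit vector $\tilde a \in \ell^q(\{0,1,\ldots,n\})$ — wait, note that the domain of $\Psi$ is $\ell^q$ indexed by the column set $\{0,1,\ldots,n\}$ and the codomain is $\ell^q_A$ (coefficient space $\ell^q$ indexed by $0,1,2,\ldots$), so I should be careful to state the indexing exactly as the lemma does, with $\tilde a \in \ell^q(\{0,1,\ldots,n\})$ and $\Psi\tilde a \in \ell^q_A$.

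Next I would produce $\tilde b$. The cleanest route is to observe that $\Psi^T\colon \ell^q_A \to \ell^q(\{0,1,\ldots,n\})$ is the transpose matrix (no conjugation), and that its operator norm equals $\|\Psi\|$. One way to see this: since $\Psi$ has rank at most $n+1$, both $\Psi$ and $\Psi^T$ have the same nonzero singular values in the Hilbert-space ($\ell^2$) sense, but more to the point in the $\ell^q$ setting, the norming functional of the vector $\Psi\tilde a \in \ell^q_A$ at which $\Psi$ attains its norm can be pushed back through the adjoint. Concretely, let $\lambda \in \ell^p_A$ be the norming functional of $\Psi\tilde a$, so $\|\lambda\|_p = 1$ and $\langle \Psi\tilde a, \lambda\rangle = \|\Psi\tilde a\|_q = \|\Psi\|$. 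Then $\langle \tilde a, \Psi^*\lambda\rangle = \|\Psi\|$ where $\Psi^*$ is the genuine Banach-space adjoint $\ell^p_A \to \ell^p(\{0,1,\ldots,n\})$; since $\|\tilde a\|_q = 1$ this gives $\|\Psi^*\lambda\|_p \geq \|\Psi\|$, and the reverse inequality $\|\Psi^*\| = \|\Psi\|$ is automatic, so $\lambda$ norms $\Psi^*$ as well. Now I must translate the Banach adjoint $\Psi^*$ (which carries a conjugation coming from the sesquilinear pairing) into the matrix $\Psi^T$: explicitly, $\Psi^*\lambda$ has entries $\sum_j \overline{[\Psi]_{j,k}}\, \lambda_j$ whereas $\Psi^T \tilde b$ has entries $\sum_j [\Psi]_{j,k}\, \tilde b_j$, so I would simply set $\tilde b = \lambda^{\langle 1\rangle}$ in the notation \eqref{definition-de-zs}, i.e. conjugate the coefficients of $\lambda$ (this is an isometry of $\ell^p_A$, hence $\|\tilde b\|_p = 1$), which converts the conjugated pairing into the unconjugated matrix action and yields $\|\Psi^T\tilde b\|_q = \|\Psi\|$. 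Finally, $\langle \tilde b, \Psi\tilde a\rangle$: unwinding the bilinear pairing $\langle \cdot,\cdot\rangle$ on $\ell^q_A \times \ell^p_A$, one has $\langle \tilde b, \Psi\tilde a\rangle = \langle \Psi\tilde a, \tilde b\rangle = \sum_j (\Psi\tilde a)_j \tilde b_j = \sum_j (\Psi\tilde a)_j \overline{\lambda_j}$, which by choice of $\lambda$ as the norming functional of $\Psi\tilde a$ equals $\|\Psi\tilde a\|_q = \|\Psi\|$. So all three identities hold simultaneously with this coordinated choice of $\tilde a$ and $\tilde b$.

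The main obstacle, and the point requiring the most care, is bookkeeping the conjugations: the paper uses a \emph{bilinear} (not sesquilinear) pairing $\langle f,g\rangle = \sum f_k g_k$ to identify $(\ell^p_A)^* = \ell^q_A$, while norming functionals and Birkhoff--James orthogonality are most naturally phrased through the $\langle s\rangle$-power operation, which is exactly where conjugation enters. So I would be deliberate about which adjoint I mean — the transpose matrix $\Psi^T$ acts without conjugation and is the natural "adjoint" for the bilinear pairing, and the substitution $\tilde b = \lambda^{\langle 1\rangle}$ (coefficientwise conjugation) is what reconciles the norming-functional description with the transpose-matrix description. A secondary, purely technical point is confirming that $\|\Psi^T\| = \|\Psi\|$ and that $\Psi^T$ also attains its norm; both follow from finite rank (compactness) plus reflexivity of $\ell^q_A$, by the same weak-compactness argument applied to $\Psi^T$, but it is cleanest to get $\tilde b$ directly from the norming functional of $\Psi\tilde a$ as above rather than running the compactness argument a second time, because that is what forces all three displayed equalities to use the \emph{same} pair $(\tilde a,\tilde b)$.
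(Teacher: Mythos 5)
Your idea for obtaining $\tilde b$ — take the norming functional of $\Psi\tilde a$ — is genuinely different from the paper's route (the paper runs a second Banach--Alaoglu weak-convergence argument on a norming sequence $b^{(n)}$ for $\Psi\tilde a$), and it is cleaner, but as written there is a real error in the conjugation bookkeeping.

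The paper's duality pairing $\langle f,g\rangle = \sum_k f_k g_k$ is \emph{bilinear}, not sesquilinear, and with respect to this pairing the Banach-space adjoint of $\Psi$ is exactly the transpose matrix $\Psi^T$ \emph{with no conjugation}: $\langle \Psi a, g\rangle = \sum_j\sum_k [\Psi]_{j,k}a_k g_j = \langle a, \Psi^T g\rangle$. So the step ``$\Psi^*\lambda$ has entries $\sum_j \overline{[\Psi]_{j,k}}\,\lambda_j$'' is false (you are importing the Hilbert-space, conjugate-transpose adjoint), the ``translation'' from $\Psi^*$ to $\Psi^T$ is vacuous, and the substitution $\tilde b = \lambda^{\langle 1\rangle}$ introduces a spurious conjugation. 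Indeed your final verification then reads $\langle \tilde b, \Psi\tilde a\rangle = \sum_j (\Psi\tilde a)_j\overline{\lambda_j}$, which is \emph{not} $\|\Psi\tilde a\|_q$ in general: the norming condition in this paper's convention is $\sum_j (\Psi\tilde a)_j\lambda_j = \|\Psi\tilde a\|_q$ (via $g^{\langle q-1\rangle}$, not $\overline{g^{\langle q-1\rangle}}$). The fix is simple: take $\tilde b = \lambda$ directly. Then $\langle \tilde b, \Psi\tilde a\rangle = \|\Psi\|$ by definition of the norming functional, and the chain $\|\Psi\| = \langle \tilde a, \Psi^T\tilde b\rangle \leq \|\tilde a\|_q\|\Psi^T\tilde b\|_p \leq \|\Psi^T\|\,\|\tilde b\|_p = \|\Psi\|$ forces $\|\Psi^T\tilde b\|_p = \|\Psi\|$ exactly as you argued, with no conjugation anywhere.

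A secondary, harmless remark: the Banach--Alaoglu plus compact-operator machinery you deploy to produce $\tilde a$ is overkill, since $\tilde a$ ranges over the unit sphere of $\ell^q(\{0,\ldots,n\}) \cong \mathbb{C}^{n+1}$, which is already norm-compact; the paper just invokes continuity of $a \mapsto \|\Psi a\|_q$ on that compact sphere. Your extra work isn't wrong, but for this lemma it is the existence of $\tilde b$, not $\tilde a$, that requires any real infinite-dimensional argument — and there your norming-functional shortcut is in fact more economical than the paper's weak-compactness argument, once the conjugation is removed.

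Finally, note the lemma statement as typeset in the paper has $\tilde b \in \ell^q_A$ and $\|\Psi^T\tilde b\|_q$, whereas the roles of $p$ and $q$ as used in the proof (and in your proposal) make it $\tilde b \in \ell^p_A$ and $\|\Psi^T\tilde b\|_p$; your indexing is the consistent one, so be aware the discrepancy is in the statement, not in your argument.
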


\begin{proof}  First, let us observe that the condition (UB) always holds for a finite collection of points $\{z_0, z_1,\ldots, z_n\}$.  This is because
\begin{align*}
      \big\|  a_0 \Lambda_{z_0}/\|\Lambda_{z_0}\|_q  + \cdots +  a_n \Lambda_{z_n}/\|\Lambda_{z_n}\|_q  \big\|_q
     &\leq  \|a\|_q \Bigg[  \Big( \frac{\|\Lambda_{z_0}\|_{q}}{\|\Lambda_{z_0}\|_q} \Big)^{p}+ \cdots +\Big( \frac{\|\Lambda_{z_n}\|_{q}}{\|\Lambda_{z_n}\|_q} \Big)^{p} \Bigg]^{1/p}\\
     &= (n+1)^{1/p}  \|a\|_q.
\end{align*}

Next, for $a = [a_0\ \ a_1\ \ \cdots\ \ a_n]^T$ we can express the condition (UB) as
\[
      \sup_{a \neq 0} \frac{\| \Psi a \|_{q}}{\|a\|_{q}} < \infty.
\]
The supremum in the condition (UB) is attained by some choice of unit vector $\tilde{a}$.   This is because $\|\Psi a\|_q$ is a continuous function of $a$ as $a$ varies varies over the unit sphere of $\mathbb{C}^{n+1}$, a compact set.

With  some selection of $\tilde{a}$ fixed, we obtain $\tilde{b} \in \ell^p$ as follows.   We know that
\[
     \|\Psi\| = \|\Psi \tilde{a}\|_q = \sup\Bigg\{ \frac{|\langle b,\Psi \tilde{a}\rangle|}{\|b\|_p}:\ b \in \ell^p_A,\ b\neq 0  \Bigg\}
\]
We can therefore choose unit vectors $b^{(n)} \in \ell^p$ so that each $\langle b^{(n)},\Psi \tilde{a}\rangle \geq 0$, and
\[
     \|\Psi\| = \|\Psi \tilde{a}\|_q = \lim_{n\rightarrow\infty} \langle b^{(n)},\Psi \tilde{a}\rangle.
\]
By passing to a subsequence, we can assume that the $b^{(n)}$ converge weakly to some ${b} \in \ell^p$.   If $\lambda \in \ell^q$ is the norming functional for ${b}$, then from
\[
     1= \|b^{(n)}\|_p =  \|b^{(n)}\|_p \|\lambda\|_q \geq \big|\langle b^{(n)},\lambda\rangle\big| \rightarrow \langle {b},\lambda \rangle= \|{b}   
     \|,
\]
we see that
\[
     \|\Psi\| = \lim_{n\rightarrow\infty} \langle b^{(n)},\Psi \tilde{a}\rangle \leq \frac{\langle {b},\Psi \tilde{a}\rangle}{\|{b}\|_p} \leq \|\Psi\|.
\]

Equality must hold throughout the previous line.   Put $\tilde{b} = b/\|b\|_p$, and note
\[
   \|\Psi\| = \langle \tilde{b},\Psi\tilde{a}\rangle  =  \langle \Psi^T \tilde{b},\tilde{a}\rangle \leq \|\Psi^T \tilde{b}\|_p \|\tilde{a}\|_q = 
   \frac{ \|\Psi^T \tilde{b}\|_p}{\|\tilde{b}\|_p} \leq \|\Psi\|.
\]
Again, equality prevails, showing that $\|\Psi^T \tilde{b}\|_p = \|\Psi\|$.
\end{proof}

Next we show that the extremal vectors $\tilde{a}$ and $\tilde{b}$ are related by a nonlinear equation when $p\neq 2$.  Recall that we use the notation
\[
       a^{\langle p-1 \rangle} = \big[a_0^{\langle p-1 \rangle} \ \ a_1^{\langle p-1 \rangle} \ \ \ \ \cdots\ \ \ \ a_n^{\langle p-1 \rangle} \big]^T
\]
when
\[
      a  =  \big[ a_0 \ \ a_1 \ \ \ \ \cdots\ \ \ \ a_n \big]^T.
\]

\begin{Theorem}
Let $\{z_0, z_1,\ldots, z_n\}$ be distinct  points of $\mathbb{D}$.  If $\Psi$ is the matrix given by \eqref{ducksauce}, and $\tilde{a}$ and $\tilde{b}$ are associated extremal unit vectors from Lemma \ref{previouslemma}, then 
\[
     \frac{\Psi\tilde{a}}{\|\Psi\|} = {\tilde{b}^{\langle p-1\rangle}} \ \ \ \mbox{and} \ \ \ 
      \frac{\Psi^T \tilde{b}}{\|\Psi\|} = {\tilde{a}^{\langle q-1\rangle}}.
\]
\end{Theorem}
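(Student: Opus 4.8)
The plan is to exploit the three equalities from Lemma \ref{previouslemma} together with the characterization of norming functionals in $\ell^p_A$ and $\ell^q_A$ via the $\langle s \rangle$-operation established in Section \ref{prelim}. The key observation is that $\|\Psi\| = \langle \tilde b, \Psi \tilde a \rangle$ with $\|\tilde b\|_p = 1$ and $\|\Psi \tilde a\|_q = \|\Psi\|$ says precisely that $\Psi \tilde a / \|\Psi\|$ is \emph{the} norming functional of $\tilde b \in \ell^p_A$ (using smoothness of $\ell^p_A$ to get uniqueness). But we recorded earlier that the norming functional of a nonzero $f \in \ell^p_A$ is $f^{\langle p-1\rangle}/\|f\|_p^{p-1}$; applied to the unit vector $\tilde b$ this is simply $\tilde b^{\langle p-1\rangle}$. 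Hence $\Psi \tilde a / \|\Psi\| = \tilde b^{\langle p-1\rangle}$.

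Symmetrically, $\|\Psi\| = \langle \Psi^T \tilde b, \tilde a \rangle$ (moving $\Psi$ across the pairing, as already done in the proof of Lemma \ref{previouslemma}) with $\|\tilde a\|_q = 1$ and $\|\Psi^T \tilde b\|_p = \|\Psi\|$ exhibits $\Psi^T \tilde b/\|\Psi\|$ as the norming functional of $\tilde a$, now viewed as a unit vector in $\ell^q_A$ (or rather $\ell^q(\{0,\ldots,n\})$). Since $q$ is conjugate to $p$ and the same James-type formula holds for $\ell^q$, the norming functional of the unit vector $\tilde a$ is $\tilde a^{\langle q-1\rangle}$, giving $\Psi^T \tilde b / \|\Psi\| = \tilde a^{\langle q-1\rangle}$. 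One should note these two relations are in fact equivalent to each other via the identity $(\alpha^{\langle p-1\rangle})^{\langle q-1\rangle} = \alpha$ from the list in Section \ref{prelim}, so proving one essentially delivers the other; but it is cleanest to present both arguments in parallel.

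The steps, in order: (1) recall from Lemma \ref{previouslemma} that $\langle \tilde b, \Psi \tilde a \rangle = \|\Psi\|$, $\|\tilde b\|_p = 1$, $\|\Psi \tilde a\|_q = \|\Psi\|$; (2) observe that a unit vector $v$ and a functional $\mu$ with $\langle v, \mu \rangle = \|\mu\|_q$ force $\mu/\|\mu\|_q$ to be the (unique, by smoothness) norming functional of $v$; (3) quote $f^{\langle p-1\rangle}/\|f\|_p^{p-1}$ as the formula for the norming functional and specialize to $f = \tilde b$; (4) repeat verbatim with the roles of $p$ and $q$, $\tilde a$ and $\tilde b$, $\Psi$ and $\Psi^T$ interchanged, using $\langle \Psi^T \tilde b, \tilde a \rangle = \langle \tilde b, \Psi \tilde a\rangle = \|\Psi\|$.

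I do not anticipate a genuine obstacle here — the content is essentially bookkeeping with the dual pairing and the explicit norming-functional formula. The one point requiring a little care is making sure the nonnegativity/normalization conventions line up so that the extremal equalities really do pin down the norming functional rather than a unimodular multiple of it; this is handled by the uniqueness of norming functionals in a smooth space, which is exactly why the hypothesis $1 < p < \infty$ was imposed at the outset. A secondary point worth a sentence is that $\tilde a$ lives in the finite-dimensional space $\ell^q(\{0,\ldots,n\})$, which is trivially smooth and for which the James formula applies coordinatewise, so nothing is lost in invoking the $\ell^q_A$ machinery there.
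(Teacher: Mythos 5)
Your argument is correct and matches the paper's proof in all essentials: both identify $\Psi\tilde{a}/\|\Psi\|$ and $\Psi^{T}\tilde{b}/\|\Psi\|$ as norming functionals of $\tilde{b}$ and $\tilde{a}$ via the extremal equalities of Lemma \ref{previouslemma}, note that $\tilde{b}^{\langle p-1\rangle}$ and $\tilde{a}^{\langle q-1\rangle}$ are also norming (the paper verifies the pairings directly, you quote the formula from Section \ref{prelim} — the same thing), and conclude by uniqueness of norming functionals in a smooth space. No gaps.
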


\begin{proof}
In Lemma \eqref{previouslemma}, it was established that
\begin{equation}\label{normedbyab}
           \langle \tilde{b}, \Psi \tilde{a} \rangle  = \|\Psi\|.
\end{equation}

We check by inspection that 
\[
     \big\| \Psi \tilde{a}/\|\Psi\| \big\|_q = 1 \ \ \mbox{and}\ \ \big\|\Psi^T \tilde{b}/\|\Psi\|\big\|_p = 1.
\]

We can therefore read \eqref{normedbyab} as saying that $\Psi \tilde{a}/\|\Psi\|$ is a norming functional for $\tilde{b}$, and $\Psi^T \tilde{b}/\|\Psi\|$ is norming for $\tilde{a}$.

Routine calculation confirms that $\|\tilde{a}^{\langle q -1 \rangle}\|_p = \|\tilde{a}\|_q = 1$, and $\|\tilde{b}^{\langle p-1 \rangle}\|_q = \|\tilde{b}\|_p = 1$.  In addition, we have
\begin{align*}
    \langle \tilde{a}, \tilde{a}^{\langle q -1 \rangle} \rangle &= \|\tilde{a}\|_q^q = 1\\
      \langle \tilde{b}, \tilde{b}^{\langle p -1 \rangle} \rangle &= \|\tilde{b}\|_p^p = 1.\\
\end{align*}

Thus we also find that $\tilde{a}$ is normed by $\tilde{a}^{\langle q -1 \rangle}$, and 
$\tilde{b}$ is normed by $\tilde{b}^{\langle p-1 \rangle}$.

But the spaces $\ell^p$ and $\ell^q$ are smooth, being that $1<p<\infty$, which is to say that norming functionals for these spaces are unique.  Consequently we obtain
\[
     \frac{\Psi\tilde{a}}{\|\Psi\|} = {\tilde{b}^{\langle p-1\rangle}} \ \ \ \mbox{and} \ \ \ 
      \frac{\Psi^T \tilde{b}}{\|\Psi\|} = {\tilde{a}^{\langle q-1\rangle}}.
\]
\end{proof}

By using the property
\[
    (a^{\langle p-1 \rangle})^{\langle q-1 \rangle}  = a
\]
for all $a \in \mathbb{C}$, we immediately derive a fixed-point property for $\tilde{a}$ and $\tilde{b}$.

\begin{Corollary}\label{nonlinearconds}
Let $\{z_0, z_1,\ldots, z_n\}$ be distinct  points of $\mathbb{D}$.  If $\Psi$ is the matrix given by \eqref{ducksauce}, and $\tilde{a}$ and $\tilde{b}$ are associated extremal unit vectors from Lemma \ref{previouslemma}, then
    \begin{align}
     \tilde{a} &=  \frac{\big[\Psi^T(\Psi \tilde{a} )^{\langle q-1\rangle}\big]^{\langle p-1\rangle}}{\|\Psi\|^{p}} \label{fixpta}\\
      \tilde{b} &=  \frac{\big[\Psi(\Psi^T \tilde{b} )^{\langle p-1\rangle}\big]^{\langle q-1\rangle}}{\|\Psi\|^{q}} \label{fixptb}.
  \end{align}
\end{Corollary}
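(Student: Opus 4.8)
The plan is to deduce the two fixed-point identities directly from the preceding theorem, which already establishes the pair of nonlinear relations
\[
     \frac{\Psi\tilde{a}}{\|\Psi\|} = {\tilde{b}^{\langle p-1\rangle}} \qquad \mbox{and} \qquad
      \frac{\Psi^T \tilde{b}}{\|\Psi\|} = {\tilde{a}^{\langle q-1\rangle}}.
\]
The only extra ingredient is the algebraic bookkeeping of the $\langle s\rangle$-operation recorded in Section \ref{prelim}, in particular the homogeneity $(\alpha\beta)^{\langle s\rangle} = \alpha^{\langle s\rangle}\beta^{\langle s\rangle}$, the scalar identity $|\alpha^{\langle s\rangle}| = |\alpha|^s$, and the involution $(\alpha^{\langle p-1\rangle})^{\langle q-1\rangle} = \alpha$, all applied entrywise to vectors (and noting that a real positive scalar $c$ satisfies $c^{\langle s\rangle} = c^s$, so that $(c\,v)^{\langle s\rangle} = c^s v^{\langle s\rangle}$ for $v$ a vector and $c>0$).

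First I would take the second identity from the theorem, apply the $\langle q-1\rangle$-operation of the first identity in reverse: from $\Psi^T\tilde b / \|\Psi\| = \tilde a^{\langle q-1\rangle}$ we get, applying $(\cdot)^{\langle p-1\rangle}$ to both sides and using the involution property, $\tilde a = \big(\Psi^T\tilde b/\|\Psi\|\big)^{\langle p-1\rangle}$. Now substitute for $\tilde b$ using the first identity rewritten as $\tilde b = (\Psi\tilde a/\|\Psi\|)^{\langle q-1\rangle}$ (again via the involution). This yields
\[
    \tilde a = \Big(\tfrac{1}{\|\Psi\|}\,\Psi^T\big(\tfrac{1}{\|\Psi\|}\Psi\tilde a\big)^{\langle q-1\rangle}\Big)^{\langle p-1\rangle}.
\]
Pulling the positive scalars $1/\|\Psi\|$ outside the $\langle s\rangle$-operations, using $(c\,v)^{\langle s\rangle} = c^s v^{\langle s\rangle}$ for $c>0$: the inner operation contributes a factor $\|\Psi\|^{-(q-1)}$, linearity of $\Psi^T$ then carries that factor and the outer $1/\|\Psi\|$ out front as $\|\Psi\|^{-1}\cdot\|\Psi\|^{-(q-1)} = \|\Psi\|^{-q}$, and finally the outer $\langle p-1\rangle$-operation raises this to the power $p-1$, giving $\|\Psi\|^{-q(p-1)} = \|\Psi\|^{-p}$ since $q(p-1) = p$. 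This is exactly \eqref{fixpta}. The identity \eqref{fixptb} follows by the symmetric argument with the roles of $\tilde a,\tilde b$ and of $p,q$ interchanged (and $\Psi \leftrightarrow \Psi^T$), using $p(q-1) = q$.

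I do not expect any serious obstacle here; the result is essentially a corollary of the previous theorem combined with the list of $\langle s\rangle$-identities, and the entire content is the careful tracking of the scalar powers of $\|\Psi\|$ through the two nested nonlinear operations. The one point requiring a word of care is justifying that $\|\Psi\|$, being a positive real number, commutes with $(\cdot)^{\langle s\rangle}$ as $(c\,v)^{\langle s\rangle} = c^{s}v^{\langle s\rangle}$; this is immediate from the definition \eqref{definition-de-zs} applied entrywise, since for $c>0$ one has $c^{\langle s\rangle} = c^s$ and the multiplicativity $(\alpha\beta)^{\langle s\rangle} = \alpha^{\langle s\rangle}\beta^{\langle s\rangle}$ then handles the product. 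With that in hand the arithmetic $q(p-1)=p$ and $p(q-1)=q$, which is just a restatement of $1/p+1/q=1$, closes both cases.
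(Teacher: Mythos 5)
Your proposal is correct and takes the same route as the paper, which simply notes that the fixed-point identities follow "immediately" from the previous theorem together with the involution $(\alpha^{\langle p-1\rangle})^{\langle q-1\rangle}=\alpha$. You have merely spelled out the scalar bookkeeping (pulling the positive factor $\|\Psi\|$ through the $\langle s\rangle$-operations and using $q(p-1)=p$, $p(q-1)=q$) that the paper leaves implicit.
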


\begin{Remark}
  When $p=2$, the mapping
  \[
        x \longmapsto  \frac{\big[\Psi^T(\Psi x )^{\langle q-1\rangle}\big]^{\langle p-1\rangle}}{\|\Psi\|^{p}} 
  \]
  simplifies to
  \[
       x \longmapsto  \frac{\Psi^*(\Psi x)}{\|\Psi\|^2},
  \]
  which is to say that the nonlinear operator appearing in \eqref{fixpta} generalizes the Gramian matrix when $p \neq 2$, apart from a multiplicative constant.
\end{Remark}

There are converse statements to Corollary \ref{nonlinearconds}, enabling us to retrieve both extremal vectors, if either fixed point condition \eqref{fixpta} or \eqref{fixptb} holds.   Here is the precise statement when a unit vector $\tilde{a}$ satisfies \eqref{fixpta}.

\begin{Theorem}
Let $\{z_0, z_1,\ldots, z_n\}$ be distinct  points of $\mathbb{D}$ and let $\Psi$ be the matrix from \eqref{ducksauce}.  If there exists a unit vector $\tilde{a} \in \ell^q(\{0, 1,\ldots, n\})$ such that
\[
       \tilde{a} =  \frac{\big[\Psi^T(\Psi \tilde{a} )^{\langle q-1\rangle}\big]^{\langle p-1\rangle}}{\|\Psi\|^{p}}, 
\]
and
\[
      \tilde{b} := \Big( \frac{\Psi\tilde{a}}{\|\Psi\|}  \Big)^{\langle q-1 \rangle}
\]
is a unit vector in $\ell^p$,
then $\|\Psi\tilde{a}\|_q = \|\Psi\|$, $\|\Psi^T \tilde{b}\|_p = \|\Psi\|$, $\langle \tilde{b}, \Psi \tilde{a} \rangle  = \|\Psi\|$,
\[
     \frac{\Psi\tilde{a}}{\|\Psi\|} = {\tilde{b}^{\langle p-1\rangle}} \ \ \ \mbox{and} \ \ \ 
      \frac{\Psi^T \tilde{b}}{\|\Psi\|} = {\tilde{a}^{\langle q-1\rangle}},
\]
and $\tilde{b}$ satisfies the fixed point condition \eqref{fixptb}.
\end{Theorem}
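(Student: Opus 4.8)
The plan is to reduce everything to the two ``norming relations''
\[
\frac{\Psi\tilde a}{\|\Psi\|} = \tilde b^{\langle p-1\rangle}, \qquad \frac{\Psi^T\tilde b}{\|\Psi\|} = \tilde a^{\langle q-1\rangle},
\]
and then observe that each of the remaining assertions is a purely formal consequence of this pair, obtained by running the computation of the Theorem preceding Corollary~\ref{nonlinearconds} in reverse. The only tools needed are the five algebraic identities for the $\langle s\rangle$ operation listed after \eqref{definition-de-zs}, the scalar homogeneity $(\lambda v)^{\langle s\rangle}=\lambda^{s}v^{\langle s\rangle}$ for the positive real $\lambda=\|\Psi\|$ (a special case of the multiplicativity $(\alpha\beta)^{\langle s\rangle}=\alpha^{\langle s\rangle}\beta^{\langle s\rangle}$ applied entrywise, since $\lambda^{\langle s\rangle}=\lambda^{s}$), and the numerical identities $(p-1)(q-1)=1$ and $(q-1)p=q$. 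Note $\|\Psi\|>0$, since e.g. the top row of $\Psi$ is strictly positive, so all the divisions below are legitimate.

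First I would dispatch the first norming relation, which is immediate from the definition of $\tilde b$: since $\tilde b:=(\Psi\tilde a/\|\Psi\|)^{\langle q-1\rangle}$, applying $\langle p-1\rangle$ entrywise and using $(\alpha^{\langle q-1\rangle})^{\langle p-1\rangle}=\alpha$ (the last listed identity with the roles of $p$ and $q$ interchanged, valid because $(p-1)(q-1)=1$) gives $\tilde b^{\langle p-1\rangle}=\Psi\tilde a/\|\Psi\|$. Because $|\alpha^{\langle p-1\rangle}|=|\alpha|^{p-1}$ and $(p-1)q=p$, one has $\|\tilde b^{\langle p-1\rangle}\|_q^q=\sum_k|b_k|^{p}=\|\tilde b\|_p^p=1$, hence $\|\Psi\tilde a\|_q=\|\Psi\|$.

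Next I would extract the second norming relation from the fixed-point hypothesis on $\tilde a$. Using the first relation and scalar homogeneity, $(\Psi\tilde a)^{\langle q-1\rangle}=(\|\Psi\|\,\tilde b^{\langle p-1\rangle})^{\langle q-1\rangle}=\|\Psi\|^{q-1}\tilde b$; substituting this into $\Psi^{T}(\Psi\tilde a)^{\langle q-1\rangle}$, pulling the positive scalar through $\langle p-1\rangle$, and invoking $(q-1)(p-1)=1$, the hypothesis collapses to $\tilde a=(\Psi^{T}\tilde b)^{\langle p-1\rangle}/\|\Psi\|^{p-1}$. Applying $\langle q-1\rangle$ to both sides and using $(\alpha^{\langle p-1\rangle})^{\langle q-1\rangle}=\alpha$ then yields $\Psi^{T}\tilde b/\|\Psi\|=\tilde a^{\langle q-1\rangle}$, the second norming relation. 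Since $\|\tilde a^{\langle q-1\rangle}\|_p^p=\sum_k|a_k|^{(q-1)p}=\|\tilde a\|_q^q=1$, this also gives $\|\Psi^{T}\tilde b\|_p=\|\Psi\|$.

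Finally, with both norming relations in hand: the pairing identity is $\langle\tilde b,\Psi\tilde a\rangle=\|\Psi\|\,\langle\tilde b,\tilde b^{\langle p-1\rangle}\rangle=\|\Psi\|\,\|\tilde b\|_p^p=\|\Psi\|$, using $\langle f,f^{\langle p-1\rangle}\rangle=\|f\|_p^p$. For the fixed point \eqref{fixptb} I would compute, from the second norming relation, $(\Psi^{T}\tilde b)^{\langle p-1\rangle}=\|\Psi\|^{p-1}(\tilde a^{\langle q-1\rangle})^{\langle p-1\rangle}=\|\Psi\|^{p-1}\tilde a$, so $\Psi(\Psi^{T}\tilde b)^{\langle p-1\rangle}=\|\Psi\|^{p-1}\Psi\tilde a$; applying $\langle q-1\rangle$ and again using $(\Psi\tilde a)^{\langle q-1\rangle}=\|\Psi\|^{q-1}\tilde b$ gives $[\Psi(\Psi^{T}\tilde b)^{\langle p-1\rangle}]^{\langle q-1\rangle}=\|\Psi\|^{(p-1)(q-1)}\,\|\Psi\|^{q-1}\tilde b=\|\Psi\|^{q}\tilde b$, which is \eqref{fixptb} after dividing by $\|\Psi\|^{q}$. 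There is no analytic content here; the only thing requiring care is the exponent bookkeeping — keeping straight whether $\langle p-1\rangle$ or $\langle q-1\rangle$ is being applied, and invoking $(p-1)(q-1)=1$ and $(q-1)p=q$ at the right moments — together with the harmless remark that $\|\Psi\|>0$.
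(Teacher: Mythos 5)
Your proposal is correct and follows essentially the same route as the paper: both extract the two norming relations from the definition of $\tilde b$ and the fixed-point hypothesis on $\tilde a$, then read off the pairing identity and the second fixed-point condition by formal $\langle\,\cdot\,\rangle$-exponent bookkeeping. The one small variation is how you get $\|\Psi\tilde a\|_q=\|\Psi\|$ and $\|\Psi^T\tilde b\|_p=\|\Psi\|$: you compute them directly from $\|\tilde b^{\langle p-1\rangle}\|_q=\|\tilde b\|_p^{p-1}=1$ and $\|\tilde a^{\langle q-1\rangle}\|_p=\|\tilde a\|_q^{q-1}=1$, whereas the paper squeezes both out of a chain of H\"older and operator-norm inequalities starting from $\langle\tilde a,\Psi^T\tilde b\rangle=\|\Psi\|$; your version is slightly more direct but equivalent in content.
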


\begin{proof}  With $\tilde{b}$ as defined, we immediately have
\[ \frac{\Psi\tilde{a}}{\|\Psi\|} = {\tilde{b}^{\langle p-1\rangle}}  \]
and
\begin{align*}
    \tilde{a} &= \frac{\big[\Psi^T(\Psi \tilde{a} )^{\langle q-1\rangle}\big]^{\langle p-1\rangle}}{\|\Psi\|^{p}}\\
         &=  \frac{\big[\Psi^T(\Psi \tilde{a}/\|\Psi\| )^{\langle q-1\rangle}\big]^{\langle p-1\rangle}}{\|\Psi\|^{p}/\|\Psi\|^{\langle q-1\rangle \langle p-1\rangle}} \\
         &=  \frac{[\Psi^T\tilde{b}]^{\langle p-1\rangle}}{\|\Psi\|^{p-1}}\\
         {\tilde{a}^{\langle q-1\rangle}} &= \frac{\Psi^T \tilde{b}}{\|\Psi\|},
\end{align*}
as claimed.

Next, we obtain the fixed point condition \eqref{fixptb} from
\begin{align*}
    \tilde{a} &=  \frac{[\Psi^T\tilde{b}]^{\langle p-1\rangle}}{\|\Psi\|^{p-1}} \\
    \frac{\Psi \tilde{a}}{\|\Psi\|} &=  \frac{\Psi[\Psi^T\tilde{b}]^{\langle p-1\rangle}}{\|\Psi\|^{p}} \\
    \Big(\frac{\Psi \tilde{a}}{\|\Psi\|}\Big)^{\langle q-1\rangle} &=  \frac{(\Psi[\Psi^T\tilde{b}]^{\langle p-1\rangle})^{\langle q-1\rangle}}{\|\Psi\|^{p(q-1)}} \\
     \tilde{b} &=  \frac{\big[\Psi(\Psi^T \tilde{b} )^{\langle p-1\rangle}\big]^{\langle q-1\rangle}}{\|\Psi\|^{q}}.
\end{align*}

Finally,
\[
     1 = \langle \tilde{a}, (\tilde{a})^{\langle q-1\rangle} \rangle = \frac{\langle \tilde{a}, \Psi^T\tilde{b}\rangle}{\|\Psi\|}
\]
yields $\langle \tilde{a}, \Psi^T\tilde{b}\rangle = \|\Psi\|$, and the inequalities
\begin{align*}
       \|\Psi\| &= \langle \tilde{a}, \Psi^T\tilde{b}\rangle \leq \|\Psi\tilde{a}\|_q \|\tilde{b}\|_p \leq  \|\Psi\| \|\tilde{a}\|_q \|\tilde{b}\|_p = \|\Psi\|\\
        \|\Psi\| &= \langle \tilde{a}, \Psi^T\tilde{b}\rangle \leq \|\tilde{a}\|_q \|\Psi^T \tilde{b}\|_p \leq  \|\Psi\| \|\tilde{a}\|_q \|\tilde{b}\|_p = \|\Psi\|
\end{align*}
force $\|\Psi\tilde{a}\|_q = \|\Psi\|$ and $\|\Psi^T \tilde{b}\|_p = \|\Psi\|$.
\end{proof}

If instead \eqref{fixptb} holds, the statement and proof are analogous.

Finally let us confirm that the extremal property of $\tilde{a}$ is well behaved in the limit.

\begin{Proposition}
    Let $1<p<\infty$ and $1/p + 1/q = 1$.  Suppose that $\{z_0, z_1, z_2,\ldots\}$ is a sequence of distinct  points in the disk $\mathbb{D}$.       If $\Psi$ is the infinite matrix with $(j,k)$th entry $z_k^j(1 - |z_k|^q)^{1/q}$, then either $\Psi$ is a bounded operator on $\ell^q$, and
    \begin{equation}\label{tomatosoup}
           \lim_{n\rightarrow\infty}\Bigg( \sup_{(a_1,\ldots,a_n)\neq 0} \frac{\|\Psi(a_1,\ldots,a_n, 0, 0,\ldots)^T\|_{q}}{\|(a_1,\ldots,a_n)\|_{q}}\Bigg)
              =  \sup_{a \neq 0} \frac{\|\Psi a\|_{q}}{\|a\|_{q}},
    \end{equation}
    or $\Psi$ is unbounded and both sides of \eqref{tomatosoup} are infinite.
\end{Proposition}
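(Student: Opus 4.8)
The plan is to reduce everything to the monotone behaviour of the column truncations of $\Psi$. For $n\ge 1$, let $\Psi_n$ denote the operator that $\Psi$ induces on vectors supported on the first $n$ coordinates, so that the inner supremum on the left of \eqref{tomatosoup} is precisely the operator norm $\|\Psi_n\|$ from $\ell^q$ on those coordinates into $\ell^q$. Each column of $\Psi$ is, up to a positive scalar, the coefficient sequence of $\Lambda_{z_k}/\|\Lambda_{z_k}\|_q$, hence lies in $\ell^q_A$; therefore $\Psi a$ is a genuine element of $\ell^q$ with $\|\Psi a\|_q<\infty$ for every finitely supported $a$, and each $\|\Psi_n\|$ is finite (a crude bound of the kind appearing in the proof of Lemma \ref{previouslemma} already gives $\|\Psi_n\|\le n^{1/p}$ or so). Since a unit vector supported on the first $m$ coordinates is also supported on the first $n$ coordinates when $m\le n$, the numbers $\|\Psi_n\|$ form a nondecreasing sequence, so $L:=\lim_{n\to\infty}\|\Psi_n\|=\sup_n\|\Psi_n\|$ exists in $[0,\infty]$. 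This $L$ is exactly the left-hand side of \eqref{tomatosoup}.

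I would then split into two cases. Suppose first that $L<\infty$. For any finitely supported $a$, supported on the first $n$ coordinates say, we have $\|\Psi a\|_q\le\|\Psi_n\|\,\|a\|_q\le L\|a\|_q$. Given an arbitrary $a\in\ell^q$, let $a^{(n)}$ be its truncation to the first $n$ coordinates; then for $m<n$ the difference $a^{(n)}-a^{(m)}$ is finitely supported, so $\|\Psi a^{(n)}-\Psi a^{(m)}\|_q=\|\Psi(a^{(n)}-a^{(m)})\|_q\le L\|a^{(n)}-a^{(m)}\|_q$, which tends to $0$ as $m,n\to\infty$ since $q<\infty$ forces the tails of $a$ to vanish in $\ell^q$. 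Hence $\{\Psi a^{(n)}\}$ is Cauchy in $\ell^q$; denote its limit by $\Psi a$. Coordinatewise convergence shows $(\Psi a)_j=\sum_k \Psi_{j,k}a_k$, so this limit agrees with the formal matrix action, and $\|\Psi a\|_q=\lim_n\|\Psi a^{(n)}\|_q\le L\|a\|_q$. Thus $\Psi$ is a bounded operator on $\ell^q$ with $\|\Psi\|\le L$; since each $\Psi_n$ is just $\Psi$ restricted to a subspace, $\|\Psi\|\ge\|\Psi_n\|$ for all $n$, so $\|\Psi\|\ge L$. Therefore $\|\Psi\|=L$, which is exactly \eqref{tomatosoup}, its right-hand side being $\|\Psi\|$.

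Now suppose $L=\infty$. If $\Psi$ were a bounded operator on $\ell^q$ with norm $C<\infty$, then $\|\Psi_n\|\le C$ for every $n$, contradicting $L=\infty$; hence $\Psi$ is unbounded. The left-hand side of \eqref{tomatosoup} equals $L=\infty$ by assumption, and the right-hand side, being a supremum over at least all finitely supported $a$, is at least $\sup_n\|\Psi_n\|=\infty$. So both sides are infinite, completing the argument; the two cases are exhaustive since they correspond to the dichotomy $L<\infty$ versus $L=\infty$.

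I expect the only real subtlety to be the Cauchy/density argument in the case $L<\infty$: one must verify that $\Psi$ genuinely defines a bounded operator on all of $\ell^q$ — that the matrix–vector series converges and the result lies in $\ell^q$ — rather than merely being uniformly bounded on the dense subspace of finitely supported vectors. This is precisely where the finiteness of $q$ (density of finitely supported vectors, vanishing of $\ell^q$ tails) and the membership of the columns of $\Psi$ in $\ell^q_A$ are used; everything else is monotonicity of $\{\|\Psi_n\|\}$ together with the definition of operator norm.
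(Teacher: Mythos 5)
Your argument is correct and is essentially the same as the paper's: both rest on the density of finitely supported vectors in $\ell^q$ (finiteness of $q$) and the monotonicity of the truncated operator norms $\|\Psi_n\|$. The one meaningful difference is organizational and to your advantage: you split on whether $L := \lim_n\|\Psi_n\|$ is finite, whereas the paper splits on whether $\Psi$ is bounded, assuming boundedness up front and then merely asserting that both sides diverge in the unbounded case. Your Cauchy/density argument for $L<\infty \Rightarrow \Psi$ bounded is exactly what is needed to justify that unstated direction of the dichotomy, so your write-up is a bit more complete on this point.
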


\begin{proof}     
First, let us suppose that the infinite matrix $\Psi$ is a bounded operator from $\ell^{q}$ into $\ell^{q}$.    
  For each $N =1, 2, 3,\ldots$ let $\Psi^{(N)}$ be the $\infty\times (N+1)$  matrix consisting of the 0th through $N$th columns of $\Psi$, and let $a^{(N)}$ be an $(N+1)$-dimensional unit vector for which the supremum
\[
      \sup_{a \neq 0} \frac{\|\Psi^{(N)}a\|_q}{\|a\|_q}
\]
is attained.
It is obvious that 
\begin{equation}\label{finiteapprox}
     \lim_{n\rightarrow\infty} \frac{\|\Psi^{(N)}a^{(N)}\|_{q}}{\|a^{(N)}\|_{q}}  \leq \sup_{a\neq 0} \frac{\|\Psi a\|_{q}}{\|a\|_{q}}.
\end{equation}

On the other hand, there exist unit vectors $a^{[n]}$ in $\ell^q$, supported on  the indices $\{0, 1, 2,\ldots, n\}$, such that $\|\Psi a^{[n]}\|_q$ converges to $\|\Psi\|$.

Let $\epsilon > 0$.  For some $N$, we have
\[
    \|\Psi \| - \|\Psi a^{[n]}\|_{q} < \epsilon
\]
whenever $n \geq N$.

Then
\[
   \|\Psi \|-\epsilon  \leq  \frac{\|\Psi a^{[N]}\|_{q}}{\|a^{[N]}\|_{q}}  \leq \frac{\|\Psi^{(N)}a^{(N)}\|_{q}}{\|a^{(N)}\|_{q}}
\]

This shows that equality holds in \eqref{finiteapprox}. 

If $\Psi$ is not a bounded operator on $\ell^q$, then both sides of \eqref{tomatosoup} diverge to infinity.
\end{proof}

These results say that the condition (UB) can be approached by approximating with finite dimensional cases.  Furthermore, they identify the nonlinear relationships between the extremal vectors that apply when $p\neq 2$.


\section{Weak Separation}\label{weak-separation}

Another facet of Carleson's Theorem equates interpolating sequences with a geometric condition and a measure-theoretic condition. Namely, a sequence $Z = \{z_j\}$ is interpolating for $H^\infty$ if and only if both of the following conditions hold:
\begin{enumerate}
\item[(i)] The points of $Z$ are weakly separated in the pseudohyperbolic metric; that is, 
\[
\inf_{j\neq k}\left|\frac{z_j - z_k}{1 - \overline{z_k}z_j} \right| > 0.
\]
\item[(ii)] 
There exists a constant $C > 0$ so that the atomic measure $\mu = \sum_{j\ge 0} (1 - |z_j|^2)\delta_{z_j}$ satisfies
\[
\int_\mathbb{D} |f|^2 \ d\mu \le C \|f\|^2_{H^2}
\]
for all $f\in H^2$.
\end{enumerate}

In our final two sections, we will explore both corresponding conditions for $\ell^p_A$. We begin here by providing a definition for weak separation in the multiplier algebra of $\ell^p_A$, which we will denote by 
\[
\mathscr{M}_p := \left\{\varphi \in \ell^p_A :  \varphi f \in \ell^p_A, \ \forall f\in \ell^p_A\right\}.
\] 
We endow this space with the norm 
\[
\| \varphi\|_{\mathscr{M}_p} = \sup \left\{\| \varphi f \|_p :  f \in \ell^p_A, \ \|f\|_p=1 \right\}.
\]

We point the reader to \cite{MR3714456,CMR} for further background and properties of these spaces.

\begin{Definition}
Let $1 < p < \infty$. A sequence $Z = \{z_0, z_1,\ldots\}\subset \mathbb{D}$ is \textit{weakly separated} by $\mathscr{M}_p$ if there exists $\epsilon > 0$ such that, whenever $i \neq j$, there exists $\phi_{ij} \in \mathscr{M}_p$, with $\|\phi_{ij}\|_{\mathscr{M}_p} \leq1$, that satisfies
\[
\phi_{ij}(z_i) = \epsilon, \ \ \ \phi_{ij}(z_j) = 0. 
\]
\end{Definition}
It is well known that when $p=2$, this definition is equivalent to separation in terms of the pseudohyperbolic metric. We would like to uncover a similar \textit{geometric} condition for $p\neq 2$.

Fix $1<p<\infty$, and let $1/p+1/q=1$.  Let us define 
\[
\rho_p(z_1, z_2)  := \left|\frac{z - z_2}{1 - z_2^{\langle q-1 \rangle}z_1}\right| = \left|\frac{z_1 - z_2}{1 - |z_2|^{q-2}\overline{z_2}z_1}\right|.
\]
We see that when $p=2$, we recover the pseudohyperbolic metric. 
We also note that the function
\[
\varphi(z) = \frac{z - \alpha}{1 - \alpha^{\langle q-1 \rangle}z}
\]
is extremal in the sense that it solves the problem
\[
\sup\left\{|f(0)| : \|f\|_p = 1, f(\alpha) = 0 \right\}.
\]
Again, notice that when $p=2$, we recover the familiar Blaschke factor.  More generally, $\varphi(z)$ is $p$-inner, as discussed in Section \ref{prelim}.  However, products of such functions fail to be $p$-inner when $p \neq 2$; this is one of the ways that the function theory of $\ell^p_A$ is a tough nut to crack.  See \cite{Chengetal1} for details.

It is immediate upon inspection that $\rho_p$ will not be symmetric, i.e. in general $\rho_p(z_1, z_2) \neq \rho_p(z_2, z_1)$. In turn, the best we can hope for is that $\rho_p$ be a \textit{quasimetric}, which turns out to be the case. 

\begin{Proposition}
Let $1 < p < \infty$. The function $\rho_p \colon \mathbb{D}\times\mathbb{D} \to \mathbb{R}$ given by
\[
\rho_p(z_1, z_2)  := \left|\frac{z_1 - z_2}{1 - z_2^{\langle q-1 \rangle}z_1}\right| 
\]
is a quasimetric on the unit disk.
\end{Proposition}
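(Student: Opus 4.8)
The plan is to verify the three quasimetric axioms for $\rho_p$: nonnegativity with $\rho_p(z_1,z_2)=0 \iff z_1=z_2$; a (possibly asymmetric) finiteness/boundedness that keeps us inside $[0,1)$; and a quasi-triangle inequality $\rho_p(z_1,z_3) \leq C\big(\rho_p(z_1,z_2) + \rho_p(z_2,z_3)\big)$ for some constant $C = C(p)$ independent of the points. The first axiom is essentially immediate: the numerator $|z_1-z_2|$ vanishes exactly when $z_1=z_2$, and the denominator $1 - z_2^{\langle q-1\rangle}z_1 = 1 - |z_2|^{q-2}\overline{z_2}z_1$ is nonzero on $\mathbb{D}\times\mathbb{D}$ since $|z_2^{\langle q-1\rangle}z_1| = |z_2|^{q-1}|z_1| < 1$. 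So the only real content is the quasi-triangle inequality, and that is where I expect all the difficulty to lie.

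First I would record the auxiliary map $\varphi_\alpha(z) = \dfrac{z-\alpha}{1 - \alpha^{\langle q-1\rangle}z}$, whose modulus at $z$ is exactly $\rho_p(z,\alpha)$, and establish the two estimates that will do the work: a lower bound $1 - |\alpha^{\langle q-1\rangle}z| \geq 1 - |\alpha|^{q-1} \geq c_p(1-|\alpha|)$ valid uniformly for $\alpha,z\in\mathbb{D}$ (using $q-1>0$, with $c_p$ depending only on $q$, hence on $p$), together with the trivial upper bound $1 - |\alpha^{\langle q-1\rangle}z| \leq 2$. These convert $\rho_p(z,\alpha)$ into something comparable, up to constants depending only on $p$, to the quantity $\dfrac{|z-\alpha|}{1-|\alpha|}$. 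One should check the reverse comparison too, i.e. that $\rho_p(z,\alpha)$ is bounded \emph{below} by a $p$-dependent multiple of $\dfrac{|z-\alpha|}{1-|\alpha|}$ on the relevant range, or at least on the range where $\rho_p$ is small — this two-sided comparison is the crux, because once $\rho_p(z_1,z_2)$ is equivalent (with constants depending only on $p$) to the ordinary pseudohyperbolic distance, or to $|z_1-z_2|/(1-|z_2|)$, the quasi-triangle inequality for $\rho_p$ follows from the genuine triangle inequality for the comparison quantity, at the cost of absorbing the comparison constants into $C(p)$.

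Concretely, the key step is: show there exist constants $0 < c_p \leq C_p < \infty$, depending only on $p$, such that
\[
c_p \,\frac{|z_1 - z_2|}{1 - |z_2|} \ \leq\ \rho_p(z_1,z_2)\ \leq\ C_p\, \frac{|z_1-z_2|}{1-|z_2|}
\]
whenever $\rho_p(z_1,z_2) \leq 1/2$ (say), and separately note that $\rho_p < 1$ always, so large values are harmless for a quasimetric. The upper bound uses the lower estimate on the denominator described above; the lower bound requires controlling $|1 - |z_2|^{q-2}\overline{z_2}z_1|$ from above by a $p$-multiple of $1-|z_2|$ when $z_1$ is pseudohyperbolically close to $z_2$, which is where one must be a little careful when $z_2$ is near the boundary and $q-2$ may be negative (the factor $|z_2|^{q-2}$ can blow up as $|z_2|\to 1$ if $q<2$, so I would instead write the denominator as $1 - z_2^{\langle q-1\rangle}z_1$ and estimate $|1 - z_2^{\langle q-1\rangle}z_1| \leq |1 - \overline{z_2}z_1| + |z_2 - z_2^{\langle q-1\rangle}||z_1| = |1-\overline{z_2}z_1| + (|z_2| - |z_2|^{q-1})$, noting $|z_2| - |z_2|^{q-1} \leq$ const$\cdot(1-|z_2|)$). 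Once this comparison is in hand, I would finish: given $z_1,z_2,z_3$, if either $\rho_p(z_1,z_2)$ or $\rho_p(z_2,z_3)$ exceeds $1/2$ the inequality $\rho_p(z_1,z_3)<1 \leq 2\cdot\tfrac12 \leq 2(\rho_p(z_1,z_2)+\rho_p(z_2,z_3))$ is automatic; otherwise both are small, pass to the comparison quantity $d(z,w) := |z-w|/(1-|w|)$, use $d(z_1,z_3) \leq d(z_1,z_2) + \tfrac{1-|z_2|}{1-|z_3|}d(z_2,z_3)$ together with the fact that $\rho_p(z_2,z_3)$ small forces $(1-|z_2|)/(1-|z_3|)$ bounded, and conclude. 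The main obstacle, as indicated, is getting the two-sided comparison with constants that genuinely depend only on $p$ and not on the location of the points near $\partial\mathbb{D}$; everything else is bookkeeping.
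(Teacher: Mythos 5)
Your strategy — comparing $\rho_p$ two-sidedly with the Euclidean-weighted quantity $|z-w|/(1-|w|)$ on the range where $\rho_p$ is small, then falling back on a crude global bound for $\rho_p$ in the complementary case — is a genuinely different route from the paper's, which reduces to the basepoint $0$ via a M\"{o}bius change of variables and then verifies an explicit algebraic inequality $\rho_p(\gamma_1,\gamma_2)\le\rho_p(-|\gamma_1|,|\gamma_2|)$. Your uniform upper comparison $\rho_p(z_1,z_2)\le C_p\,|z_1-z_2|/(1-|z_2|)$ is correct (using $1-|z_2|^{q-1}\ge\min(1,q-1)(1-|z_2|)$), and the local lower comparison can be made precise along the lines you indicate; one small slip is that $|z_2|^{q-2}$ blows up as $|z_2|\to 0$, not as $|z_2|\to 1$, when $q<2$, but this is harmless since $|z_2^{\langle q-1\rangle}|=|z_2|^{q-1}$ stays bounded.

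The genuine gap is in the fallback step: you assert ``$\rho_p<1$ always,'' and this is false when $p\neq 2$. Take $p=3$, $q=3/2$, $z_1=0.99$, $z_2=0.5$. Then $z_2^{\langle q-1\rangle}=z_2^{\langle 1/2\rangle}=\sqrt{0.5}\approx 0.707$, so
\[
\rho_p(z_1,z_2)=\frac{|0.99-0.5|}{|1-0.707\cdot 0.99|}\approx\frac{0.49}{0.30}\approx 1.63 .
\]
(The same example gives $\rho_p(z_1,z_2)>|z_1|+|z_2|=\rho_p(z_1,0)+\rho_p(0,z_2)$, so the constant-one triangle inequality the paper aims at also deserves a second look; the paper's reduction treats $\rho_p$ as invariant under the standard automorphism $\varphi_w$, which is specific to $p=2$.) Moreover there is no $p$-uniform bound: for $p=10$, $q=10/9$, $z_1=0.9999$, $z_2=0.5$, one has $z_2^{\langle 1/9\rangle}\approx 0.926$ and $\rho_p\approx 6.7$. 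A $p$-dependent bound $\rho_p\le M_p$ very likely does hold, and with it your case split would close (replace the threshold $1/2$ and the cap $1$ by $\tau$ and $M_p$ and absorb $M_p/\tau$ into $C(p)$), but your argument neither proves nor invokes such a bound, and proving it is essentially the substance of the triangle inequality itself. As written, the large-$\rho$ regime, which is where the real difficulty sits, is left unaddressed.
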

\begin{proof}
The function $\rho_p$ is clearly positive and positive definite. We need now only to prove a triangle inequality. Since the Blaschke factor $\varphi_w$, with zero at $w \in \mathbb{D}$, is an automorphism of $\mathbb{D}$, we have
\[
\rho_p(z_1, z_2) \leq\rho_p(z_1, w) + \rho_p(w, z_2) \iff \rho_p(\varphi_w(z_1), \varphi_w(z_2)) \leq\rho_p(\varphi_w(z_1), 0) + \rho_p(0, \varphi_w(z_2)).
\]
With $\gamma_1 = \varphi_w(z_1)$ and $\gamma_2= \varphi_w(z_2)$, it suffices to show that 
\[
\rho_p(\gamma_1, \gamma_2) \leq\rho_p(\gamma_1, 0) + \rho_p(0, \gamma_2), 
\]
or equivalently that
\[
\left|\frac{\gamma_1 - \gamma_2}{1 - \gamma_2^{\langle q-1 \rangle}\gamma_1}\right| \leq|\gamma_1| + |\gamma_2|.
\]
What we will actually show is that  
\[
\rho_p(\gamma_1, \gamma_2) \leq\frac{|\gamma_1| + |\gamma_2|}{1 + |\gamma_2|^{\langle q-1 \rangle}|\gamma_1|},
\]
which gives the result, as $1 + |\gamma_2|^{\langle q-1 \rangle}|\gamma_1| \geq 1$. 
Notice that 
\begin{align*}
\rho_p(-|\gamma_1|, |\gamma_2|) &= \left|\frac{-|\gamma_1| - |\gamma_2|}{1 - |\gamma_2|^{\langle q-1 \rangle}(-|\gamma_1|)}\right| \\
&=\frac{|\gamma_1| + |\gamma_2|}{1 + |\gamma_2|^{\langle q-1 \rangle}|\gamma_1|} 
\end{align*}
which is the upper bound we would like to establish for $\rho_p(\gamma_1, \gamma_2)$. 
In order to do this, we show that 
\[
1 - \rho_p(\gamma_1, \gamma_2)^2 \geq 1 - \rho_p(-|\gamma_1|, |\gamma_2|)^2.
\]
Suppose $\gamma_1$ and $\gamma_2$ are non-zero (otherwise the inequality is trivial).  Notice that $\Re\left\{\gamma_1\overline{\gamma_2}\right\} \geq -1$ and $|\gamma_1 \gamma_2| \leq1$, which gives 
\[
\frac{\Re\left\{\gamma_1\overline{\gamma_2}\right\}}{|\gamma_1 \gamma_2|} \geq \Re\left\{\gamma_1\overline{\gamma_2}\right\} \geq -1, 
\]
and yields that 
\[
\Re\left\{\gamma_1\overline{\gamma_2}\right\} \geq -|\gamma_1 \gamma_2|.
\]
Let us work backwards from this inequality to get what we want: 
\begin{align*}
&\Re\left\{\gamma_1\overline{\gamma_2}\right\} \geq -|\gamma_1 \gamma_2|\\
\implies & \Re\left\{\gamma_1\overline{\gamma_2}\right\} \geq \left(\frac{|\gamma_2|^{q-2} - 1}{1- |\gamma_2|^{q-2}}\right) |\gamma_1 \gamma_2|\\
\implies & \Re\left\{ \gamma_1 \overline{\gamma_2} \right\} - \Re\left\{\gamma_1\gamma_2^{\langle q-1\rangle} \right\} \geq \left(|\gamma_2|^{q-2} - 1\right) |\gamma_1 \gamma_2|\\
\implies & \Re\left\{ \gamma_1 \overline{\gamma_2} \right\} - \Re\left\{\gamma_1\gamma_2^{\langle q-1\rangle} \right\} \geq 
|\gamma_2|^{\langle q-1\rangle>}|\gamma_1| - |\gamma_1 \gamma_2|\\
\end{align*}
We now multiply each side by $2$ and add $1 + |\gamma_2^{\langle q-1\rangle}\gamma_1|^2 - |\gamma_1|^2 - |\gamma_2|^2$ to the last inequality above:
\begin{align*}
& 2\Re\left\{ \gamma_1 \overline{\gamma_2} \right\} - 2\Re\left\{\gamma_1\gamma_2^{\langle q-1\rangle} \right\} +  1 + |\gamma_2^{\langle q-1\rangle}\gamma_1|^2 - |\gamma_1|^2 - |\gamma_2|^2 \\
& \ \ \ \geq 
2|\gamma_2|^{\langle q-1\rangle}|\gamma_1| - 2|\gamma_1 \gamma_2| + 1 + |\gamma_2^{\langle q-1\rangle}\gamma_1|^2 - |\gamma_1|^2 - |\gamma_2|^2\\
\implies & 
\underbrace{1 - 2\Re\left\{ \gamma_2^{\langle q-1\rangle}\gamma_1\right\} + \left| \gamma_2^{\langle q-1\rangle}\gamma_1 \right|^2}_{|1 - \gamma_2^{\langle q-1\rangle}\gamma_1|^2}
- \underbrace{\left( |\gamma_1|^2 - 2\Re\left\{ \gamma_1 \overline{\gamma_2}\right\} + |\gamma_2|^2 \right)}_{|\gamma_1 - \gamma_2|^2} \\
& \ \ \ \geq 
\underbrace{1 + 2|\gamma_2|^{\langle q-1\rangle}|\gamma_1| + |\gamma_2^{\langle q-1\rangle}\gamma_1|^2}_{(1 + |\gamma_2|^{\langle q-1\rangle}|\gamma_1|)^2}
- \underbrace{\left(|\gamma_1|^2 - 2 |\gamma_1\gamma_2| + |\gamma_2|^2 \right)}_{(|\gamma_1| + |\gamma_1|)^2}.
\end{align*}
Hence, we have 
\[
|1 - \gamma_2^{\langle q-1\rangle}\gamma_1|^2  + |\gamma_1 - \gamma_2|^2 \geq (1 + |\gamma_2|^{\langle q-1\rangle}|\gamma_1|)^2  + (|\gamma_1| + |\gamma_1|)^2. 
\]
This was all to say, using the triangle inequality in the second line below, that
\begin{align*}
1 - \rho_p(\gamma_1, \gamma_2)^2 
& = \frac{|1 - \gamma_2^{\langle q-1\rangle}\gamma_1|^2 - |\gamma_1 - \gamma_2|^2}{|1 - \gamma_2^{\langle q-1\rangle}\gamma_1|^2}\\
&\geq \frac{|1 - \gamma_2^{\langle q-1\rangle}\gamma_1|^2 - |\gamma_1 - \gamma_2|^2}{1 + |\gamma_2^{\langle q-1\rangle}\gamma_1|^2}\\
& \geq \frac{(1 + |\gamma_2|^{\langle q-1\rangle}|\gamma_1|)^2  + (|\gamma_1| + |\gamma_1|)^2}{1 + |\gamma_2^{\langle q-1\rangle}\gamma_1|^2}\\
& = 1 - \rho_p(-|\gamma_1|, |\gamma_2|)^2.
\end{align*}
Thus, we have
\begin{align*}
\rho_p(\gamma_1, \gamma_2) &\leq\rho_p(-|\gamma_1|, |\gamma_2|)\\
&\leq|\gamma_1| + |\gamma_2|\\
&\leq\rho_p(\gamma_1, 0) + \rho_p(0, \gamma_2),
\end{align*}
which establishes the result. 
\end{proof}

In turn, there is some hope that $\rho_p$ may do the job of describing weak separation. That is, for the sequence $Z = \{z_j\}$, we hope to show the separation condition
\begin{equation}\label{weak-sep}
\inf_{n \ne k}  \rho_p(z_n, z_k)  > 0
\end{equation}
is equivalent to $Z$ being weakly separated by $\mathscr{M}_p$. Before doing this, we need a couple of observations, which are variants of the classical Schwarz and Schwarz-Pick lemmata.

\begin{Lemma}\label{p-SL}
Let $1< p < \infty$ and suppose $\phi \in \mathscr{M}_p$, with $\|\phi\|_{\mathscr{M}_p} \leq M$ and $\phi(0)=0$. Then, for all $z\in \mathbb{D}$, 
\[
|\phi(z)| \leq M |z|.
\]

\end{Lemma}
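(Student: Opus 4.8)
The plan is to peel off one factor of $z$ from $\phi$ and then invoke the elementary fact that a multiplier is dominated pointwise on $\mathbb{D}$ by its multiplier norm.

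First I would use $\phi(0)=0$ to write $\phi(z)=z\psi(z)$, where if $\phi(z)=\sum_{k\ge1}a_kz^k$ then $\psi(z)=\sum_{k\ge0}a_{k+1}z^k$; the coefficient sequence of $\psi$ is a backward shift of that of $\phi$, so $\psi\in\ell^p_A$. The key point in this step is that $\psi$ is still a multiplier and $\|\psi\|_{\mathscr{M}_p}=\|\phi\|_{\mathscr{M}_p}\le M$. Indeed, multiplication by $z$ is an isometry of $\ell^p_A$ (it merely shifts coefficients), and for every $f\in\ell^p_A$ we have $\phi f=z(\psi f)$, hence $\|\psi f\|_p=\|z\psi f\|_p=\|\phi f\|_p$; in particular $\psi f\in\ell^p_A$ whenever $\phi f\in\ell^p_A$, and taking the supremum over $\|f\|_p=1$ gives the equality of multiplier norms.

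Second I would prove that $|\eta(z)|\le\|\eta\|_{\mathscr{M}_p}$ for every $\eta\in\mathscr{M}_p$ and every $z\in\mathbb{D}$. Identifying $(\ell^p_A)^{*}$ with $\ell^q_A$ via the bilinear pairing, let $M_\eta^{T}$ denote the transpose of the multiplication operator $f\mapsto\eta f$. For all $f\in\ell^p_A$ we have $\langle f, M_\eta^{T}\Lambda_z\rangle=\langle \eta f,\Lambda_z\rangle=(\eta f)(z)=\eta(z)f(z)=\langle f,\eta(z)\Lambda_z\rangle$, so $M_\eta^{T}\Lambda_z=\eta(z)\Lambda_z$; that is, $\Lambda_z$ is an eigenvector of $M_\eta^{T}$ with eigenvalue $\eta(z)$. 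Since $\|\Lambda_z\|_q\ne0$, this forces $|\eta(z)|\le\|M_\eta^{T}\|=\|M_\eta\|=\|\eta\|_{\mathscr{M}_p}$. (One could also argue without adjoints, choosing $f_z\in\ell^p_A$ of unit norm with $f_z(z)$ arbitrarily close to $\|\Lambda_z\|_q$ and applying $|\eta(z)f_z(z)|=|(\eta f_z)(z)|\le\|\eta f_z\|_p\|\Lambda_z\|_q$.) Applying this with $\eta=\psi$ and combining with the first step yields $|\phi(z)|=|z|\,|\psi(z)|\le M|z|$, as desired.

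I do not anticipate a real obstacle; the only step that requires a moment's care is the claim in the factoring argument that dividing out $z$ preserves the multiplier norm, which is exactly where the isometry property of the shift on $\ell^p_A$ enters. Everything else is a short computation with coefficient sequences together with the standard equality $\|M_\eta^{T}\|=\|M_\eta\|$.
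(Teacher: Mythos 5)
Your argument is correct, but it takes a genuinely different route from the paper. The paper's proof first invokes the known embedding $\mathscr{M}_p \subset H^\infty$ with $\|\phi\|_{H^\infty} \leq \|\phi\|_{\mathscr{M}_p}$ (Proposition 12.2.4 of \cite{CMR}) and then runs the classical Schwarz-lemma argument: consider $g(z)=\phi(z)/z$ and apply the maximum modulus principle on disks $D_r$, letting $r\to 1$. You instead stay entirely inside the $\ell^p_A$ framework: you divide out $z$ at the level of multipliers, using that the forward shift is an isometry of $\ell^p_A$ so that $\psi(z)=\phi(z)/z$ satisfies $\|\psi\|_{\mathscr{M}_p}=\|\phi\|_{\mathscr{M}_p}$ (your slight informality in writing $\|\psi f\|_p$ before noting $\psi f\in\ell^p_A$ is harmless, since the coefficient sequence of $\psi f$ is the backward shift of that of $\phi f$, whose zeroth coefficient vanishes), and you then prove the pointwise bound $|\eta(z)|\leq\|\eta\|_{\mathscr{M}_p}$ directly via the eigenvector relation $M_\eta^{T}\Lambda_z=\eta(z)\Lambda_z$ together with $\|M_\eta^{T}\|=\|M_\eta\|$. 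In effect you re-prove the contractive inclusion $\mathscr{M}_p\subset H^\infty$ that the paper cites, and you replace the maximum modulus principle by the isometric-shift factorization. What your route buys is self-containedness (no appeal to the cited proposition) and an argument native to the coefficient-space structure of $\ell^p_A$; what the paper's route buys is brevity and the fact that, once the $H^\infty$ bound is quoted, the Schwarz-lemma step applies verbatim to any bounded function vanishing at the origin, not only to multipliers.
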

\begin{proof}
The hypotheses ensure that $\phi \in H^\infty$ with $\| \phi \|_{H^\infty} \leq M$ (prop 12.2.4 in \cite{CMR}). This now becomes just a straightforward modification of the classical Schwarz lemma. Let
\[
g(z) =
\begin{cases}
\frac{\phi(z)}{z}, \ z \neq 0\\
\phi'(0), \ z = 0
\end{cases}
\]
which is holomorphic on $\mathbb{D}$. Let $D_r$ be the disk of radius $r$ centered at the origin ($0 < r < 1$). Now, let $z \in D_r$ and invoke the maximum modulus principle to find $z_r \in \partial D_r$ such that
\[
|g(z)| \leq|g(z_r)| = \left| \frac{\phi(z_r)}{z_r} \right| \leq\frac{M}{r}.
\]
As $r \to 1$, we see that $|g(z)| \leq M$, which gives $|\phi(z)| \leq M |z|$.
\end{proof}

We can now prove a replacement of the classical Schwarz-Pick Lemma. 
\begin{Lemma}[$p$-Schwarz-Pick Lemma]\label{p-SPL}
Let $1< p < \infty$. If $f \in \mathscr{M}_p$ with $\|f\|_{\mathscr{M}_p} \le 1$, then, for each $z,w \in \mathbb{D}$, there is a constant $M$ such that 
\[
\rho_p(f(z), f(w)) \leq M \rho_p(z,w) 
\]
\end{Lemma}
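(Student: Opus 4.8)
The plan is to imitate the classical derivation of the Schwarz–Pick lemma from the Schwarz lemma, transporting everything to the origin via the automorphisms $\varphi_w$ of $\mathbb{D}$ that appear in the quasimetric, and then to invoke Lemma \ref{p-SL}. First I would fix $z, w \in \mathbb{D}$, set $a = f(w)$, and consider the composition $g := \varphi_a \circ f \circ \varphi_w$, where $\varphi_w(\zeta) = (\zeta - w)/(1 - \overline{w}\zeta)$ is the ordinary disk automotphism sending $w$ to $0$, and $\varphi_a$ is the extremal $p$-inner factor $\varphi_a(\zeta) = (\zeta - a)/(1 - a^{\langle q-1\rangle}\zeta)$ vanishing at $a = f(w)$. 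By construction $g(0) = \varphi_a(f(w)) = 0$. The key point is that $\rho_p$ behaves well under these pre- and post-compositions: pre-composition with $\varphi_w$ is an honest Möbius change of variable (this is exactly the identity already used in the triangle-inequality proof, $\rho_p(z_1,z_2) \leftrightarrow \rho_p(\varphi_w(z_1), \varphi_w(z_2))$), and $|\varphi_a(f(z))| = \rho_p(f(z), f(w))$ is essentially the definition of $\rho_p$ with the $\langle q-1\rangle$ in the denominator. So the inequality to be proved collapses to
\[
|g(\varphi_w(z))| \;\le\; M\,|\varphi_w(z)|,
\]
which is precisely the conclusion of the $p$-Schwarz lemma, Lemma \ref{p-SL}, applied to $g$.

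The remaining work is to check that $g$ lies in $\mathscr{M}_p$ with controlled norm, so that Lemma \ref{p-SL} applies and produces the constant $M$. Here one must be careful: unlike the $p=2$ case, the maps $\varphi_w$ and $\varphi_a$ are genuine bounded analytic functions (finite Blaschke-type factors and Möbius maps), but composition need not be an isometry of $\mathscr{M}_p$ when $p \neq 2$ — composition operators and multiplication operators on $\ell^p_A$ are not as rigidly behaved as on $H^2$. The natural route is to observe that $f \in \mathscr{M}_p$ with $\|f\|_{\mathscr{M}_p}\le 1$ forces $f \in H^\infty$ with $\|f\|_\infty \le 1$ (Proposition 12.2.4 in \cite{CMR}, already cited in the proof of Lemma \ref{p-SL}), hence $g = \varphi_a \circ f \circ \varphi_w \in H^\infty$ with $\|g\|_\infty \le 1$; then, since $g$ is a bounded analytic function on $\mathbb{D}$, it is a multiplier of $\ell^p_A$ and $\|g\|_{\mathscr{M}_p} \le M'$ for some constant $M'$ depending only on $p$ (boundedness of the $H^\infty \hookrightarrow \mathscr{M}_p$-type estimate for analytic symbols, again from \cite{CMR}). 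This $M'$ is what becomes the constant $M$ in the statement. I would then run Lemma \ref{p-SL} on $g/M'$ (or on $g$ with bound $M'$) to get $|g(\zeta)| \le M'|\zeta|$ for all $\zeta$, specialize $\zeta = \varphi_w(z)$, and unwind the $\rho_p$ identities.

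The main obstacle I anticipate is not the algebra of unwinding $\rho_p$ under $\varphi_w$ and $\varphi_a$ — that is a finite computation using the identities $(\alpha\beta)^{\langle s\rangle} = \alpha^{\langle s\rangle}\beta^{\langle s\rangle}$, $(\alpha^{\langle p-1\rangle})^{\langle q-1\rangle} = \alpha$, etc. — but rather the passage from $\|f\|_{\mathscr{M}_p}\le 1$ to a uniform multiplier bound on the composition $g$. One cannot expect $\|g\|_{\mathscr{M}_p}\le 1$; the best available is a constant $M = M(p)$ (and this is precisely why the statement of Lemma \ref{p-SPL} allows an unspecified constant $M$ rather than asserting $\rho_p(f(z),f(w)) \le \rho_p(z,w)$). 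I would therefore be explicit that $M$ comes from the norm of the embedding $H^\infty \cap \mathrm{Hol}(\mathbb{D}) \to \mathscr{M}_p$ restricted to unit-ball symbols, rather than claiming any sharper contraction. A secondary, purely bookkeeping subtlety is that $\varphi_a$ with $a = f(w)$ is only $p$-inner, not a multiplier of norm one, but since $|a| < 1$ it is again a bounded analytic function and the same $H^\infty$-to-$\mathscr{M}_p$ argument absorbs it into $M$.
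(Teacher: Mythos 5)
Your overall strategy is the same as the paper's -- conjugate by disk maps so that $w$ and $f(w)$ go to the origin, then invoke Lemma \ref{p-SL} -- but two of the steps, as you describe them, do not go through. First, you pre-compose with the classical automorphism $\varphi_w(\zeta)=(\zeta-w)/(1-\overline{w}\zeta)$ and assert that the target inequality ``collapses to'' $|g(\varphi_w(z))|\le M|\varphi_w(z)|$, citing the invariance $\rho_p(z_1,z_2)\leftrightarrow\rho_p(\varphi_w(z_1),\varphi_w(z_2))$. But $\rho_p$ is not invariant under classical M\"obius maps when $p\neq 2$, and $|\varphi_w(z)|=|z-w|/|1-\overline{w}z|$ is the classical pseudohyperbolic distance, not $\rho_p(z,w)=|z-w|/|1-w^{\langle q-1\rangle}z|$. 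So your unwinding yields $\rho_p(f(z),f(w))\le M\,|z-w|/|1-\overline{w}z|$, i.e.\ the wrong quantity on the right. This is repairable, since $M$ is allowed to depend on $w$: one can absorb the ratio of denominators via $|1-w^{\langle q-1\rangle}z|\le 1+|w|^{q-1}$ and $|1-\overline{w}z|\ge 1-|w|$. The cleaner fix, and the one the paper uses, is to pre-compose with $\phi_{p,w}^{-1}=\phi_{p,-w}$, the inverse of $\phi_{p,w}(z)=(z-w)/(1-w^{\langle q-1\rangle}z)$ itself, so that $|\phi_{p,w}(z)|=\rho_p(z,w)$ exactly and no comparison is needed. (Also note a direction slip: with your $\varphi_w$, the composition $g=\varphi_a\circ f\circ\varphi_w$ has $g(0)=\varphi_a(f(-w))\neq 0$; you need the inverse in the pre-composition, as your later substitution $\zeta=\varphi_w(z)$ already presupposes.)

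The second and more serious issue is how you propose to produce the constant: you claim that since $g\in H^\infty$ with $\|g\|_\infty\le 1$, it follows that $g\in\mathscr{M}_p$ with $\|g\|_{\mathscr{M}_p}\le M'$ for a constant depending only on $p$. No such estimate exists: for $p\neq 2$ the multiplier algebra $\mathscr{M}_p$ is a proper subalgebra of $H^\infty$, and the multiplier norm is not controlled by the sup norm. Fortunately the detour through $\mathscr{M}_p$ is unnecessary. The proof of Lemma \ref{p-SL} uses the multiplier hypothesis only to pass to the bound $\|\phi\|_{H^\infty}\le M$, and the Schwarz-lemma argument then runs with the $H^\infty$ bound alone; this is exactly how the paper proceeds, taking $M=\|\phi_{p,f(w)}\circ f\circ\phi_{p,w}^{-1}\|_{H^\infty}<\infty$, a constant depending on $f$, $p$ and $w$, as the statement permits. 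With those two corrections -- conjugating by $\phi_{p,w}^{-1}$ rather than $\varphi_w$, and quoting the Schwarz step in its $H^\infty$ form rather than inventing an $H^\infty\to\mathscr{M}_p$ embedding -- your argument coincides with the paper's proof.
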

Note: the constant $M$ depends on $f$, $p$, and $w$.

\begin{proof}
Let $w \in \mathbb{D}$ and define
\[
\phi_{p,w}(z) := \frac{z - w}{1 - w^{\langle q-1 \rangle}z}.
\]
Note that 
\[
\phi_{p,w}^{-1}(z) = \frac{z + w}{1 + w^{\langle q-1 \rangle}z} = \phi_{p,-w}(z).
\]
Consider the function
\[
\phi_{p, f(w)} \circ f \circ \phi_{p,w}^{-1},
\]
which is defined and bounded on $\mathbb{D}$, and fixes the origin. Thus, we can apply Lemma \ref{p-SL}  to get
\[
\left|\left(\phi_{p, f(w)} \circ f \circ \phi_{p,w}^{-1}\right)(z)\right| \leq M|z|
\]
for all $z\in \mathbb{D}$, with $M = \|\phi_{p, f(w)} \circ f \circ \phi_{p,w}^{-1}\|_\infty$. 
As $\phi_{p,w}^{-1}$ is injective, we have 
\[
\left|\left(\phi_{p, f(w)} \circ f\right)(z) \right| \leq M\left|\phi_{p, w}(z) \right|,
\]
which is precisely
\[
\rho_p(f(z), f(w)) \leq M \rho_p(z,w). 
\]
\end{proof}

We now characterize weakly separated sequences with the following theorem.

\begin{Theorem}
Let $1 < p < \infty$ and let $Z = \{ z_k\}$ be a sequence in $\mathbb{D}$. Then $Z$ is weakly separated by $\mathscr{M}_p$ if and only if 
\[
\inf_{j \ne k} \rho_p(z_j, z_k) > 0.
\]
\end{Theorem}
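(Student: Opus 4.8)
The plan is to prove both implications using the $p$-Schwarz--Pick machinery just developed together with the extremal characterization of the M\"obius-type factor $\varphi_{p,\alpha}(z) = (z-\alpha)/(1-\alpha^{\langle q-1\rangle}z)$.

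\emph{Sufficiency.} Suppose $\delta := \inf_{j\neq k}\rho_p(z_j,z_k) > 0$; I want to produce the multipliers $\phi_{ij}$. Given $i\neq j$, the natural candidate is $\phi_{ij} = \varphi_{p,z_j}$, which vanishes at $z_j$, lies in $\mathscr{M}_p$ (it is a $p$-inner function, hence in $H^\infty$ with controlled multiplier norm), and satisfies $|\phi_{ij}(z_i)| = \rho_p(z_i,z_j) \geq \delta$. The one technical point is that the definition of weak separation demands $\|\phi_{ij}\|_{\mathscr{M}_p}\leq 1$, whereas $\varphi_{p,z_j}$ only has multiplier norm bounded by some absolute constant $C_p$ (independent of $j$, since $\|\varphi_{p,\alpha}\|_{\mathscr{M}_p}$ is controlled uniformly in $\alpha\in\mathbb{D}$ via the $H^\infty$ bound and Prop.~12.2.4 of \cite{CMR}); so I would instead take $\phi_{ij} = \varphi_{p,z_j}/C_p$ and $\epsilon = \delta/C_p$. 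One also rotates by a unimodular constant so that $\phi_{ij}(z_i)$ is exactly equal to $\epsilon$ rather than merely $\geq\epsilon$ in modulus --- but the definition only requires $\phi_{ij}(z_i)=\epsilon$ for a \emph{fixed} $\epsilon$; reading it charitably (as is standard) one only needs $|\phi_{ij}(z_i)|\geq\epsilon$, so I would note this and move on.

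\emph{Necessity.} Suppose $Z$ is weakly separated by $\mathscr{M}_p$ with constant $\epsilon$, and fix $i\neq j$ with the associated $\phi_{ij}\in\mathscr{M}_p$, $\|\phi_{ij}\|_{\mathscr{M}_p}\leq 1$, $\phi_{ij}(z_i)=\epsilon$, $\phi_{ij}(z_j)=0$. Apply the $p$-Schwarz--Pick Lemma (Lemma~\ref{p-SPL}) to $f = \phi_{ij}$ at the points $z=z_i$, $w=z_j$:
\[
\rho_p\big(\phi_{ij}(z_i),\,\phi_{ij}(z_j)\big) \leq M\,\rho_p(z_i,z_j),
\]
i.e. $\rho_p(\epsilon,0) = |\epsilon| \leq M\,\rho_p(z_i,z_j)$, so $\rho_p(z_i,z_j)\geq |\epsilon|/M$. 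The difficulty --- and this is the main obstacle --- is that the constant $M$ in Lemma~\ref{p-SPL} depends on $f$ and on $w$, hence a priori on $i$ and $j$, so this bound is useless as stated for getting a \emph{uniform} lower bound on $\rho_p(z_i,z_j)$. To fix this I would trace through the proof of Lemma~\ref{p-SPL}: there $M = \|\phi_{p,f(w)}\circ f\circ\phi_{p,w}^{-1}\|_\infty$, and I would argue this is bounded by an absolute constant depending only on $p$. Indeed $\phi_{p,w}^{-1} = \phi_{p,-w}$ maps $\mathbb{D}$ into $\mathbb{D}$, $f$ maps $\mathbb{D}$ into $\mathbb{D}$ (since $\|f\|_{\mathscr{M}_p}\leq 1$ forces $\|f\|_{H^\infty}\leq C_p$, so after rescaling $f$ maps into a fixed disk), and $\phi_{p,f(w)}$ maps that disk into a fixed larger disk --- because $|\phi_{p,\alpha}(z)| = \rho_p(z,\alpha)$ is bounded on $\mathbb{D}\times\mathbb{D}$ by a constant depending only on $p$ (one can see this from $|1-\alpha^{\langle q-1\rangle}z|\geq 1-|\alpha|^{q-1}|z| > 0$ together with an explicit estimate, or from the quasimetric/boundedness of $\rho_p$ on the compact-in-spirit disk). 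So $M \leq C_p'$ uniformly, and we conclude $\inf_{i\neq j}\rho_p(z_i,z_j)\geq \epsilon/C_p' > 0$.

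\emph{Remark on the structure.} I would organize the write-up so that the uniform bound on $\|\varphi_{p,\alpha}\|_{\mathscr{M}_p}$ (for sufficiency) and the uniform bound on the Schwarz--Pick constant $M$ (for necessity) are each isolated as a one-line observation referencing Lemma~\ref{p-SL}, Lemma~\ref{p-SPL}, and Proposition 12.2.4 of \cite{CMR}, rather than re-deriving them; the genuinely new content of the theorem is just the two applications of these lemmas, plus the identification $|\varphi_{p,\alpha}(z)| = \rho_p(z,\alpha)$ and $\rho_p(\epsilon,0)=|\epsilon|$, which are immediate from the definition of $\rho_p$.
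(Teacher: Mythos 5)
Your necessity direction is correct and takes the same route as the paper (apply the $p$-Schwarz--Pick lemma to the separating multiplier), but you overlook the observation that makes the uniformity trivial: since $\psi_{ij}(z_j)=0$ by construction, the factor $\phi_{p,\psi_{ij}(z_j)}=\phi_{p,0}$ is the identity, so
\[
M_{ij} \;=\; \|\phi_{p,\psi_{ij}(z_j)}\circ\psi_{ij}\circ\phi_{p,z_j}^{-1}\|_{H^\infty} \;=\; \|\psi_{ij}\circ\phi_{p,z_j}^{-1}\|_{H^\infty} \;\leq\; \|\psi_{ij}\|_{H^\infty}\;\leq\;\|\psi_{ij}\|_{\mathscr{M}_p}\;\leq\;1,
\]
using only that $\phi_{p,z_j}^{-1}$ maps $\mathbb{D}$ into $\mathbb{D}$ and Proposition~12.2.4 of \cite{CMR}. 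You instead try to bound $\phi_{p,\alpha}$ for \emph{general} $\alpha$ via a uniform bound on $\rho_p$ over $\mathbb{D}\times\mathbb{D}$; this can be made to work (one can check $\rho_p\leq 2$ always), but it is unnecessary and your constant is worse than the paper's $M_{ij}\leq 1$.

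The sufficiency direction contains a genuine gap. You claim $\|\varphi_{p,\alpha}\|_{\mathscr{M}_p}\leq C_p$ uniformly in $\alpha$, citing ``the $H^\infty$ bound and Prop.~12.2.4 of \cite{CMR}.'' But Proposition~12.2.4 runs the \emph{opposite} way: it gives $\|\cdot\|_{H^\infty}\leq\|\cdot\|_{\mathscr{M}_p}$, so an $H^\infty$ bound on $\varphi_{p,\alpha}$ tells you nothing about its multiplier norm. (Being $p$-inner does not give ``controlled multiplier norm'' either; there are bounded analytic functions that are not multipliers of $\ell^p_A$ when $p\neq 2$.) The inequality you actually need is $\|\varphi\|_{\mathscr{M}_p}\leq\|\varphi\|_{\ell^1_A}$, which is what the paper uses: they normalize $\hat\psi_k = \psi_k/\|\psi_k\|_1$ and exploit the computable fact that $\sup_k\|\psi_k\|_{\ell^1_A}<\infty$ (this follows from an explicit coefficient expansion of $(w-z_k)/(1-z_k^{\langle q-1\rangle}w)$). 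Without replacing your incorrect citation by the $\ell^1_A$-to-multiplier bound and verifying the uniform finiteness of $\sup_k\|\psi_k\|_{\ell^1_A}$, the sufficiency argument does not close. Finally, the paper achieves $\phi_{jk}(z_j)=\epsilon$ exactly by scaling $\hat\psi_k$ by the unimodular-times-modulus factor $\epsilon/\epsilon_{jk}$; this is precisely the ``rotate by a unimodular constant'' step you waved off, and it costs nothing to include since $|\epsilon/\epsilon_{jk}|\leq 1$ preserves the norm bound.
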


\begin{proof}
For the forward direction, suppose there exists $\epsilon > 0$ such that, whenever $i \neq j$, there exists $\psi_{ij} \in \mathscr{M}_p$, with $\|\psi_{ij}\|_{\mathscr{M}_p} \leq1$, that satisfies
\[
\psi_{ij}(z_i) = \epsilon, \ \ \ \psi_{ij}(z_j) = 0. 
\]
It suffices to show that there exists $\delta > 0$ such that $\rho_p(z_j, z_k) \geq \delta$ for all $j\neq k$. Now, by the $p$-SPL, we know that for each $z_i,z_j \in Z$, there is a constant $M_{ij}$ such that 
\[
\epsilon = \rho_p(\psi_{ij}(z_i), \psi_{ij}(z_j)) \leq M_{ij} \rho_p(z_i,z_j). 
\]
Here, $M_{ij} = \|\phi_{p, \psi_{ij}(z_j)} \circ \psi_{ij} \circ \phi_{p,z_j}^{-1}\|_{H^{\infty}}= \| \psi_{ij} \circ \phi_{p,z_j}^{-1}\|_{H^{\infty}}$.
If $C = \sup_{i \neq j} M_{ij}$ is finite, then it suffices to take $\delta = \epsilon/C$. But notice that $\phi_{p,z_j}^{-1} (\mathbb{D}) \subseteq \mathbb{D}$, so $\|\phi_{p,z_j}^{-1}\|_{H^\infty} \leq1$. And since the same bound is true for $\psi_{ij}$ by hypothesis, we have $M_{ij} \leq1$, and thus, $C \leq1$. 

For the backward direction, let
\[
\psi_k(w) = \frac{w - z_k}{1 - z_k^{\langle q-1 \rangle}w}
\]
and let
\[
\epsilon = \left( \inf_{j \ne k} \rho_p(z_j, z_k) \right)\left(\sup_{k \geq 1} \|\psi_k \|_1\right)^{-1}
\]
which is positive and finite by assumption. 

Further, let $\hat{\psi}_k = \psi_k/\|\psi_k\|_1$ and let $\hat{\psi}_k(z_j)=\epsilon_{jk}$. Note that 
\[
\left|\epsilon_{jk}\right| = \left|\hat{\psi}_k(z_j)\right| = \left|\frac{z_j - z_k}{1 - z_k^{\langle q-1 \rangle}z_j}\right| \frac{1}{\|\psi_k\|_1} \geq \epsilon
\]
Now take $\phi_{jk} = \frac{\epsilon}{\epsilon_{jk}} \hat{\psi}_k$ and notice that 
\[
\|\phi_{jk}\|_1 = \left|\frac{\epsilon}{\epsilon_{jk}}\right| \|\hat{\psi}_k\|_1 \leq1,
\]
Which means $\phi_{jk}$ is in the closed unit ball of $\mathscr{M}_p$ since $\|\phi_{jk}\|_{\mathscr{M}_p} \leq\|\phi_{jk}\|_1$. Further, whenever $j \neq k$, we have
\[
\phi_{jk}(z_j) = \frac{\epsilon}{\epsilon_{jk}} \hat{\psi}_k(z_j) = \epsilon
\]
and 
\[
\phi_{jk}(z_k) = \frac{\epsilon}{\epsilon_{jk}} \hat{\psi}_k(z_k) = 0.
\]
Thus, $Z$ is weakly separated.

\end{proof}


\section{Carleson Measures}

In this final section we relate the condition (UB) to another class of inequalities, as well as to Carleson measures.  

Recall that a finite measure $\mu$ on $\mathbb{D}$ is said to be a {\it Carleson measure} if for some $C>0$ we have
\[
     \mu(S) \leq Ch
\]
for every set $S$ of the form
\[
      S = \{re^{i\theta} \in \mathbb{D}:\ 1-h\leq r<1,\ \theta_0\leq \theta \leq \theta_0+h\},
\]
where $0 \leq \theta_0 \leq 2\pi$ and $0<h<1$.  That is, $\mu$ is a Carleson measure in terms of  ``Carleson windows.''   

The following characterizes the condition (UB).  It is the dual statement.

\begin{Theorem}\label{carleson2}
  Assume $1<p<\infty$ and $1/p + 1/q=1$, and $\{z_0, z_1, z_2\ldots\}$ is a sequence of distinct points in $\mathbb{D}$.  
 There exists $K>0$ such that
 \begin{equation}\label{carlcondf}
     \sum_{k=0}^{\infty} (1 - |z_k|^q)^{p-1} |f(z_k)|^p  \leq K\|f\|^p_p
\end{equation}
for all $f \in \ell^p_A$,
if and only if (UB) holds, that is, there exists $C>0$ such that
\begin{equation}\label{carlcondbeta}
       \Big\|  \sum_{k=0}^{\infty} c_k \Lambda_{z_k}  \Big\|_q    \leq C  \Big(\sum_{k=0}^{\infty} |c_k|^q \|\Lambda_{z_k}\|^q_q\Big)^{1/q}
\end{equation}
for all constants $\{c_k\}_{k=0}^{\infty}$. 
 \end{Theorem}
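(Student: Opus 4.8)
The plan is to read both \eqref{carlcondf} and \eqref{carlcondbeta} as the boundedness of a pair of operators that are adjoint to one another, and then invoke the elementary fact that an operator and its Banach-space adjoint have the same norm. Since $p-1 = p/q$ and $(1-|z_k|^q)^{1/q} = \|\Lambda_{z_k}\|_q^{-1}$, the left side of \eqref{carlcondf} is exactly $\sum_k \big|(1-|z_k|^q)^{1/q} f(z_k)\big|^p$, so \eqref{carlcondf} says precisely that the sampling operator
\[
   \Theta\colon \ell^p_A \longrightarrow \ell^p, \qquad \Theta f := \Big( (1-|z_k|^q)^{1/q} f(z_k) \Big)_{k=0}^{\infty},
\]
is well defined and bounded, with $\|\Theta\|^p = K$. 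On the other side, writing $d_k = c_k \|\Lambda_{z_k}\|_q$ turns \eqref{carlcondbeta} into the inequality $\big\| \sum_k d_k \Lambda_{z_k}/\|\Lambda_{z_k}\|_q \big\|_q \le C \|d\|_{\ell^q}$; that is, \eqref{carlcondbeta} says precisely that the synthesis operator
\[
   S\colon \ell^q \longrightarrow \ell^q_A, \qquad S d := \sum_{k=0}^{\infty} d_k\, \Lambda_{z_k}/\|\Lambda_{z_k}\|_q,
\]
a priori defined on the dense subspace of finitely supported sequences, is bounded (hence extends to all of $\ell^q$), with $\|S\| = C$.

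The crux is that $\Theta$ and $S$ are mutually adjoint under the standard dualities $(\ell^p)^* = \ell^q$ and $(\ell^p_A)^* = \ell^q_A$ furnished by the bilinear pairing $\langle\cdot,\cdot\rangle$. Indeed, for a finitely supported $d\in\ell^q$ and any $f\in \ell^p_A$, expanding $\Lambda_{z_k}$ in its Maclaurin series and using $f(z_k) = \langle f, \Lambda_{z_k}\rangle$ together with $\|\Lambda_{z_k}\|_q^{-1} = (1-|z_k|^q)^{1/q}$ gives
\[
   \langle S d, f\rangle = \sum_{k} \frac{d_k\, f(z_k)}{\|\Lambda_{z_k}\|_q} = \sum_k d_k (1-|z_k|^q)^{1/q} f(z_k) = \langle d, \Theta f\rangle .
\]
This single identity is what links the two conditions.

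For the direction \eqref{carlcondbeta} $\Rightarrow$ \eqref{carlcondf}, suppose \eqref{carlcondbeta} holds, so $S$ is bounded; then its adjoint $S^*\colon \ell^p_A \to \ell^p$ is bounded with $\|S^*\| = \|S\|$. Testing the displayed identity against the standard basis vectors $e_k\in\ell^q$ shows that the $k$-th coordinate of $S^*f$ equals $(1-|z_k|^q)^{1/q} f(z_k)$, i.e.\ $S^* = \Theta$; hence $\Theta$ is bounded and \eqref{carlcondf} follows with $K = \|S\|^p$. Conversely, if \eqref{carlcondf} holds, then $\Theta$ is bounded, so $\Theta^*\colon \ell^q \to \ell^q_A$ is bounded; the identity, together with the nondegeneracy of the pairing on $\ell^q_A$, identifies $\Theta^* d$ with $S d$ for every finitely supported $d$, whence $\|S d\|_q \le \|\Theta\|\,\|d\|_{\ell^q}$. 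Undoing the substitution $d_k = c_k\|\Lambda_{z_k}\|_q$ then gives \eqref{carlcondbeta} for finitely supported $\{c_k\}$, and for arbitrary $\{c_k\}$ by observing that whenever the right side of \eqref{carlcondbeta} is finite the partial sums $\sum_{k\le N} c_k \Lambda_{z_k}$ are Cauchy in $\ell^q_A$ (apply the finite inequality to their differences).

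I expect the only delicate point to be the bookkeeping of domains: $S$ must first be handled on the finitely supported sequences and then extended, and in the direction \eqref{carlcondbeta}$\Rightarrow$\eqref{carlcondf} one must separately justify convergence of $\sum_k c_k\Lambda_{z_k}$ for general $\ell^q$-data in order to state \eqref{carlcondbeta} "for all constants $\{c_k\}$". Neither issue is deep, but both are needed for the statement exactly as written, and the rest of the argument is a formal application of the norm-preserving duality $\|\Theta\| = \|\Theta^*\|$, $\|S\| = \|S^*\|$.
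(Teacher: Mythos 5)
Your proof is correct, and it is the operator-theoretic repackaging of the computation the paper carries out concretely. The paper proves the equivalence by two explicit applications of duality: the forward direction realizes $\big\|\sum_k c_k\Lambda_{z_k}\big\|_q$ as a supremum of $\big|\sum_k c_k f(z_k)\big|/\|f\|_p$, applies H\"{o}lder's inequality to split the numerator into an $\ell^q$-factor and an $\ell^p$-factor, and then bounds the $\ell^p$-factor by \eqref{carlcondf}; the reverse direction fixes $f$, makes an explicit unimodular choice of the coefficients $\beta_k$ so that the pairing becomes a sum of nonnegative terms, and takes a supremum over $\beta\in\ell^q$. Your argument replaces both of these supremum-plus-H\"{o}lder computations with the single statement that the sampling operator $\Theta$ and the synthesis operator $S$ are Banach-space adjoints under the paper's bilinear pairing, whence $\|\Theta\|=\|S\|$ and the two conditions are literally the same boundedness statement. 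The adjoint identity $\langle Sd,f\rangle=\langle d,\Theta f\rangle$ is exactly the algebraic step that appears, unnamed, in both directions of the paper's proof, so the mathematical content is the same; what you gain is modularity (the forward and backward directions are no longer separate arguments), a cleaner bookkeeping of the optimal constant $K=C^p$, and a form that generalizes readily to other sampling/synthesis pairs. You are also right to flag the two domain issues --- defining $S$ first on finitely supported data and extending by density, and verifying that $\sum_k c_k\Lambda_{z_k}$ converges in $\ell^q_A$ whenever the right side of \eqref{carlcondbeta} is finite --- and your Cauchy-tails argument handles the latter correctly; the paper elides this point by working with finite sums and letting $N\to\infty$.
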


\begin{proof}
If \eqref{carlcondf} holds, then for any constants $\{c_k\}_{k=0}^{\infty}$,
\begin{align*}
     \Big\|  \sum_{k=0}^{\infty}c_k \Lambda_{z_k}  \Big\|_q   &=  \sup_{f\neq 0}  \frac{\Big|  \sum_{k=0}^{\infty} c_k \Lambda_{z_k}(f)  \Big|}{\|f\|_p}  \\
       &\leq  \sup_{f \neq 0}  \frac{\Big|  \sum_{k=0}^{\infty} c_k f(z_k)  \Big|}{\|f\|_p} \\
       &\leq   \sup_{f \neq 0}  \frac{ \sum_{k=0}^{\infty} |c_k| |f(z_k)| }{\|f\|_p} \\
       &=  \sup_{f \neq 0}  \frac{ \sum_{k=0}^{\infty} |c_k| \|\Lambda_{z_k}\|_q (1 - |z_k|^q)^{1/q} |f(z_k)| }{\|f\|_p} \\
       &\leq \sup_{f \neq 0} \frac{ \Big(\sum_{k=0}^{\infty} |c_k|^q \|\Lambda_{z_k}\|^q_q\Big)^{1/q} \Big( \sum_{k=0}^{\infty} (1 - |z_k|^q)^{p/q} |f(z_k)|^p\Big)^{1/p} }{\|f\|_p} \\
       &\leq \sup_{f \neq 0}  \frac{ \Big(\sum_{k=0}^{\infty} |c_k|^q \|\Lambda_{z_k}\|^q_q\Big)^{1/q} \big(K\|f\|^p_p \big)^{1/p} }{\|f\|_p} \\
       &=  K^{1/p}\Big(\sum_{k=0}^{\infty} |c_k|^q \|\Lambda_{z_k}\|^q_q\Big)^{1/q},
\end{align*}
where we have used $p/q = p(1 - 1/p) = p-1$.  This shows that
\eqref{carlcondbeta}
holds with $C = K^{1/p}$.

Conversely, suppose we select points $z_0, z_1,\ldots, z_N$ such that 
\begin{equation}\label{unsepcond}
        {\|c_0 \Lambda_{z_0} + c_1 \Lambda_{z_1}+\cdots+c_N \Lambda_{z_N}\|_q} \leq K 
         \Big( |c_0|^q \|\Lambda_{z_0}\|_q^q +\cdots + |c_N|^q \|\Lambda_{z_N}\|_q^q \Big)^{1/q}
\end{equation}
for all $N$ and all constants $\{c_k\}_{k=0}^{N}$.

Then 
\begin{align*}
       \|f\|_p  &\geq \frac{|c_0 f(z_0) + c_1 f(z_1) + \cdots + c_N f(z_N)|}{\|c_0 \Lambda_{z_0} + c_1 \Lambda_{z_1}+\cdots+c_N \Lambda_{z_N}\|_q}\\ 
           &\geq \frac{|c_0 f(z_0) + c_1 f(z_1) + \cdots + c_N f(z_N)|}{K 
         \Big( |c_0|^q \|\Lambda_{z_0}\|_q^q +\cdots + |c_N|^q \|\Lambda_{z_N}\|_q^q \Big)^{1/q}}\\ 
         &= \frac{\big|\beta_0 (1 - |z_0|^q)^{1/q} |f(z_0)| + \beta_1(1 - |z_1|^q)^{1/q}| f(z_1)| + \cdots + \beta_N (1 - |z_N|^q)^{1/q}|f(z_N)|\big|}{K 
         \Big( |\beta_0|^q  + |\beta_1|^q + \cdots + |\beta_N|^q  \Big)^{1/q}},\\ 
\end{align*}
where we have chosen 
\[    \beta_k = \begin{cases} \frac{c_k f(z_k)}{(1 - |z_k|^q)^{1/q} |f(z_k)|}&, \ f(z_k) \neq 0 \\
          0&,\ f(z_k) = 0.\end{cases}
\]

That is, for every  sequence $\beta = \{\beta_k\}_{k=0}^{\infty} \in \ell^q$, we have
\[
         \sum_{k=0}^{\infty} \beta_k (1 - |z_k|^q)^{1/q} |f(z_k) | \leq K \|\beta\|_q \|f\|_p.
\]
Finally, divide by $\|\beta\|_q$ then take the supremum over $\beta \in \ell^q \setminus \{0\}$ to obtain \eqref{carlcondf}.
\end{proof}

\begin{Remark}
In the course of solving the corona problem, Carleson showed \cite{Carl,Carl2} that $\mu$ is a Carleson measure if and only if for some $C>0$,
\begin{equation}\label{altcarl}
      \int_{\mathbb{D}} |f(z)|^p\,d\mu \leq C \|f\|_{H^p}^p
\end{equation}
for all $f$ belonging to the Hardy space $H^p$, where $0<p<\infty$.  For this reason, in some texts the condition \eqref{altcarl} is used as the definition of Carleson measure.   However, we will reserve the name ``Carleson measure'' to those measures satisfying the definition given in the beginning of this section (in terms of ``Carleson windows''), as we shall see that Carleson's result does not carry over from $H^p$ to the spaces $\ell^p_A$ when $2 <p <\infty$.
\end{Remark}

A necessary condition for the points $Z$ to satisfy the condition (UB) follows immediately.

 \begin{Corollary}
   Assume $1<p<\infty$ and $1/p + 1/q=1$.   If there exists $K>0$ such that
\[
       \Big\|  \sum_{k=0}^{\infty} \beta_k \Lambda_{z_k}  \Big\|_q    \leq K  \Big(\sum_{k=0}^{\infty} |\beta_k|^q \|\Lambda_{z_k}\|^q_q\Big)^{1/q}
       \]
 for all $f \in \ell^p_A$, then
 \[
       \sum_{k=0}^{\infty} (1 - |z_k|^q)^{p-1} <\infty.
 \]
 \end{Corollary}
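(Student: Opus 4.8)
The plan is to read the corollary off Theorem \ref{carleson2} by testing the Carleson-type inequality against a single function. The hypothesis of the corollary is precisely condition (UB) in the form \eqref{carlcondbeta}, so Theorem \ref{carleson2} supplies a constant $K'>0$ for which the inequality \eqref{carlcondf} holds, i.e.
\[
\sum_{k=0}^{\infty}(1-|z_k|^q)^{p-1}|f(z_k)|^p \leq K'\|f\|_p^p
\]
for every $f\in\ell^p_A$. Everything then hinges on a good choice of $f$.

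First I would take $f$ to be the constant function $f\equiv 1$, which belongs to $\ell^p_A$ with Maclaurin coefficient sequence $(1,0,0,\dots)$, so that $\|f\|_p=1$ and $f(z_k)=1$ for all $k$. Substituting into \eqref{carlcondf} collapses the left-hand side to $\sum_{k\geq 0}(1-|z_k|^q)^{p-1}$ and the right-hand side to $K'$, which is exactly the asserted bound $\sum_{k\geq 0}(1-|z_k|^q)^{p-1}\leq K'<\infty$. If one prefers to avoid Theorem \ref{carleson2} and argue straight from \eqref{carlcondbeta}, one can instead pair $\sum_{k=0}^{N}\beta_k\Lambda_{z_k}$ against $f\equiv 1$, using $\langle 1,\Lambda_{z_k}\rangle=1$ together with the identity $\|\Lambda_{z_k}\|_q^q=(1-|z_k|^q)^{-1}$ from Section \ref{prelim}, and the choice $\beta_k=(1-|z_k|^q)^{p-1}$ for $k\leq N$, $\beta_k=0$ otherwise; since $q(p-1)=p$ this reduces \eqref{carlcondbeta} to $S_N\leq K\,S_N^{1/q}$ with $S_N=\sum_{k=0}^{N}(1-|z_k|^q)^{p-1}$, and then $1-\tfrac1q=\tfrac1p$ gives $S_N\leq K^{p}$ uniformly in $N$, whence the full series converges.

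There is essentially no obstacle: the corollary is the special case $f\equiv 1$ of the Carleson-type inequality \eqref{carlcondf}, and the only thing to check is the exponent bookkeeping linking $\|\Lambda_{z_k}\|_q=(1-|z_k|^q)^{-1/q}$, the conjugate exponents $p$ and $q$, and the weight $(1-|z_k|^q)^{p-1}$, all of which is routine.
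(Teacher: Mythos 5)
Your proof is correct and is essentially the paper's own argument: the paper proves this corollary exactly by invoking Theorem \ref{carleson2} to pass from \eqref{carlcondbeta} to \eqref{carlcondf} and then taking $f\equiv 1$. Your alternative direct computation with $\beta_k=(1-|z_k|^q)^{p-1}$ is also sound, but it amounts to the same duality bookkeeping already contained in the theorem.
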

 
 To prove this, use $f(z) = 1$ in the theorem.

Other choices of $f$ in Theorem \ref{carleson2} provide additional necessary conditions on a measure $\mu$ for the norm inequality \eqref{carlcondf} to hold.  This idea is illustrated in the next  proposition.

\begin{Proposition}
Let $1<p<\infty$ and $1/p + 1/q = 1$.  Suppose that the finite measure $\mu$ on $\mathbb{D}$ satisfies
\[
         \int_{\mathbb{D}} |f(z)|^p \,d\mu  \leq C  \|f\|_p^p
\]
for all $f \in \ell^p_A$.  Then there exist constants $A_1$ and $A_2$ such that
\[
           A_1 \Big(|a|^p + \frac{|1-a^2|^p}{1 - |a|^p}\Big) \geq c^p \mu\Big( \Big|\frac{a-z}{1 - \bar{a}z}\Big| \geq c  \Big)
\]
whenever $a \in \mathbb{D}$ and $0<c<1$, and
\[
                A_2 \Big( \frac{1}{1 - |a|^p}  \Big)  \geq c^p \mu\Big( \frac{1}{|1 - az|} \geq c   \Big)
\]
whenever $a \in \mathbb{D}$ and $c>1$.
\end{Proposition}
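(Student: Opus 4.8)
The plan is to test the embedding hypothesis $\int_{\mathbb{D}}|f|^p\,d\mu\le C\|f\|_p^p$ against two explicitly chosen members of $\ell^p_A$ and then extract a tail bound for $\mu$ by a Chebyshev (Markov) argument: if $|f|\ge c$ on a measurable set $E\subseteq\mathbb{D}$, then
\[
c^p\mu(E)\ \le\ \int_E|f|^p\,d\mu\ \le\ \int_{\mathbb{D}}|f|^p\,d\mu\ \le\ C\|f\|_p^p .
\]
The two test functions should be picked so that their sublevel sets $\{|f|\ge c\}$ reproduce the two families of windows appearing in the statement and so that their $\ell^p_A$-norms reproduce the two left-hand expressions; taking $A_1=A_2=C$ then yields the proposition.

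For the first inequality I would fix $a\in\mathbb{D}$ and take $g_a(z)=\dfrac{a-z}{1-az}$. Since $1-az$ has no zero in $\mathbb{D}$, $g_a$ is bounded and analytic there; expanding $\dfrac{1}{1-az}=\sum_{k\ge0}a^kz^k$ gives $g_a(z)=a+\sum_{k\ge1}a^{k-1}(a^2-1)z^k$, so that $g_a\in\ell^p_A$ with $\|g_a\|_p^p=|a|^p+|1-a^2|^p\sum_{j\ge0}|a|^{pj}=|a|^p+\dfrac{|1-a^2|^p}{1-|a|^p}$. Then for $0<c<1$ the Chebyshev step over $E=\{z\in\mathbb{D}:\ |g_a(z)|\ge c\}$ gives
\[
c^p\,\mu(E)\ \le\ \int_{\mathbb{D}}|g_a|^p\,d\mu\ \le\ C\|g_a\|_p^p\ =\ C\Big(|a|^p+\frac{|1-a^2|^p}{1-|a|^p}\Big),
\]
which is the first assertion with $A_1=C$.

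For the second inequality I would use the point evaluation kernel itself: for fixed $a\in\mathbb{D}$, $\Lambda_a(z)=\sum_{k\ge0}a^kz^k=\dfrac{1}{1-az}$ lies in $\ell^p_A$ (and not merely in $\ell^q_A$), since $\sum_{k\ge0}|a|^{pk}<\infty$, with $\|\Lambda_a\|_p^p=\dfrac{1}{1-|a|^p}$. For $c>1$, the identical Chebyshev step applied over $E'=\{z\in\mathbb{D}:\ 1/|1-az|\ge c\}=\{z:\ |\Lambda_a(z)|\ge c\}$ gives $c^p\mu(E')\le C\|\Lambda_a\|_p^p=\dfrac{C}{1-|a|^p}$, which is the second assertion with $A_2=C$.

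I do not expect a genuine obstacle: the whole content is in selecting the two test functions so that their level sets match the geometry of the two inequalities and their $\ell^p_A$-norms match the two left-hand sides, together with the routine Maclaurin-coefficient computations of $\|g_a\|_p$ and $\|\Lambda_a\|_p$. The only points needing a moment's care are that $g_a$ and $\Lambda_a$ really lie in $\ell^p_A$ (immediate from their geometrically decaying coefficients) and that $E$ and $E'$ are measurable (immediate from continuity of $g_a$ and $\Lambda_a$ on $\mathbb{D}$); if one wishes, one could also record that further inequalities of the same flavor follow from any other convenient choice of $f$, exactly as the preamble to the proposition suggests.
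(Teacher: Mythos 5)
Your proposal is correct and follows essentially the same route as the paper: plug the test functions $(a-z)/(1-\bar a z)$ (resp.\ $\Lambda_a$) into the embedding and bound $\mu$ of the superlevel set $\{|f|\ge c\}$ from below by $c^p\mu(E)$; your direct Markov/Chebyshev step is just a cleaner version of the paper's detour through $L^p(\mu)$--$L^q(\mu)$ duality with $g=\overline{f}\chi_E/|f|$, and buys nothing different in substance. One small point to tidy: your first test function is $(a-z)/(1-az)$, so the set you control is $\{\,|\tfrac{a-z}{1-az}|\ge c\,\}$ rather than the stated $\{\,|\tfrac{a-z}{1-\bar a z}|\ge c\,\}$ (a mismatch the paper itself courts, since it uses $(a-z)/(1-\bar a z)$ but quotes the norm of your function); to match the statement exactly, use $\phi_a(z)=(a-z)/(1-\bar a z)$, whose norm is $|a|^p+\frac{(1-|a|^2)^p}{1-|a|^p}\le |a|^p+\frac{|1-a^2|^p}{1-|a|^p}$, and the stated inequality follows verbatim.
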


\begin{proof}
Suppose that $\mu$ satisfies
\[
         \int_{\mathbb{D}} |f(z)|^p \,d\mu  \leq C  \|f\|_p^p
\]
for all $f \in \ell^p_A$.

We use
\[
     \int_{\mathbb{D}} |f(z)|^p \,d\mu  =  \|f\|^p_{L^p(\mu)} 
       = \Bigg[\sup \Big\{ \Big| \int_{\mathbb{D}} f(z)g(z) \,d\mu \Big| :\ g(z) \in L^q(\mu),\ \|g\|_{L^q(\mu)} = 1  \Big\}\Bigg]^p,
\]
along with the choice $g(z) = \overline{f(z)} \chi_E(z)/|f(z)|$, where $E = \{z\in \mathbb{D}:\ |f(z)| \geq c \}$ for $c>0$.  The result is
\begin{align*}
      C\|f\|^p_p  &\geq  \Bigg[  \Big| \int_{\mathbb{D}} f(z)g(z) \,d\mu \Big| \Big/  \Big(\int_{\mathbb{D}} |g(z)|^q\,d\mu\Big)^{1/q}  \Bigg]^p  \\
         &\geq  \big[ c\mu(E) / \mu(E)^{1/q} \big]^p \\
         &= c^p \mu(E).
\end{align*}

By choosing $f(z) = \phi_a(z) = (a-z)/(1 - \bar{a}z)$, a M\"{o}bius transformation, we get bounds on the measures of certain circles and  anti-circles in $\mathbb{D}$.  

Using
\[
   \|\phi\|_p^p =  |a|^p + \frac{|1-a^2|^p}{1 - |a|^p},
\]
we have
\begin{equation}\label{condsonmu}
     C \Big(|a|^p + \frac{|1-a^2|^p}{1 - |a|^p}\Big) \geq c^p \mu\Big( \Big|\frac{a-z}{1 - \bar{a}z}\Big| \geq c  \Big),
\end{equation}

for all $0<c<1$ and $a \in \mathbb{D}$.

We can apply the same idea with $f = \Lambda_a$, with the resulting condition
\begin{equation}\label{samestuntlamb}
    C\Big( \frac{1}{1 - |a|^p}  \Big)  \geq c^p \mu\Big( \frac{1}{|1 - az|} \geq c   \Big).
\end{equation}

\end{proof}

\bigskip

We now connect the inequality   \eqref{carlcondf}  to Carleson measures.  

Suppose that $1<p<2$ and $h>0$ is small.  Choose $c = 1-h$ and $a = h^{1/p}c$.   The region 
\[
          \Big\{z:\ \Big|\frac{a-z}{1 - \bar{a}z}\Big| \geq c \Big\}
\]
appearing in \eqref{condsonmu} is the exterior of a disk contained inside $\mathbb{D}$. Its boundary circle intersects the $x$-axis at the solutions of a quadratic equation.  The region also encloses a Carleson window $S_{\delta}$ of maximal width $\delta$.  From elementary geometry, and taking into account the continuity of the square root function, etc., we can estimate the value of $\delta$ by
\begin{align*}
     \delta &=  1 - \frac{-2a(1-c^2) + \sqrt{4a^2(1-c^2)^2+(1-a^2c^2)(c^2-a^2)}}{-2(1-a^2c^2)} \\
     &= 1 - \frac{-(1-h)h^{1/p}(1-[1-h]^2)+ \kappa}{-(1-h^{2/p}[1-h]^4)},\\
     &\qquad \mbox{where\ }\kappa = \sqrt{h^{2/p}(1-h)^2(1-[1-h]^2)^2+(1-h^{2/p}[1-h]^4)(1-h)^2(1-h^{2/p})},\\
     &\approx 1 - \frac{-2h^{1+1/p} + (1 - h)}{-(1 - h^{2/p})} \\
     &\approx h.
\end{align*}
(Since $1<p<2$, we have that $h$ dominates $h^{2/p}$ in the above estimates.)  The notation $\approx$ means that  the expressions are equal apart from terms of higher than first order in $h$.

And now \eqref{condsonmu} gives
\begin{align*}
     \mu(S_{\delta})  &\leq   \mu\Big( \Big\{z:\ \Big|\frac{a-z}{1 - \bar{a}z}\Big| \geq c \Big\}\Big) \\
       &\leq  \frac{C_1}{c^p} \Big(|a|^p + \frac{|1-a^2|^p}{1 - |a|^p}\Big) \\
       &\leq  2C_1 h
\end{align*}
for $h$ sufficiently small.  

In Figure \ref{carleson-small-p} below, we plot the above inequalities; the most heavily shaded region depicts the Carleson window discussed above.

\begin{center}
\begin{figure}[h!]
\includegraphics{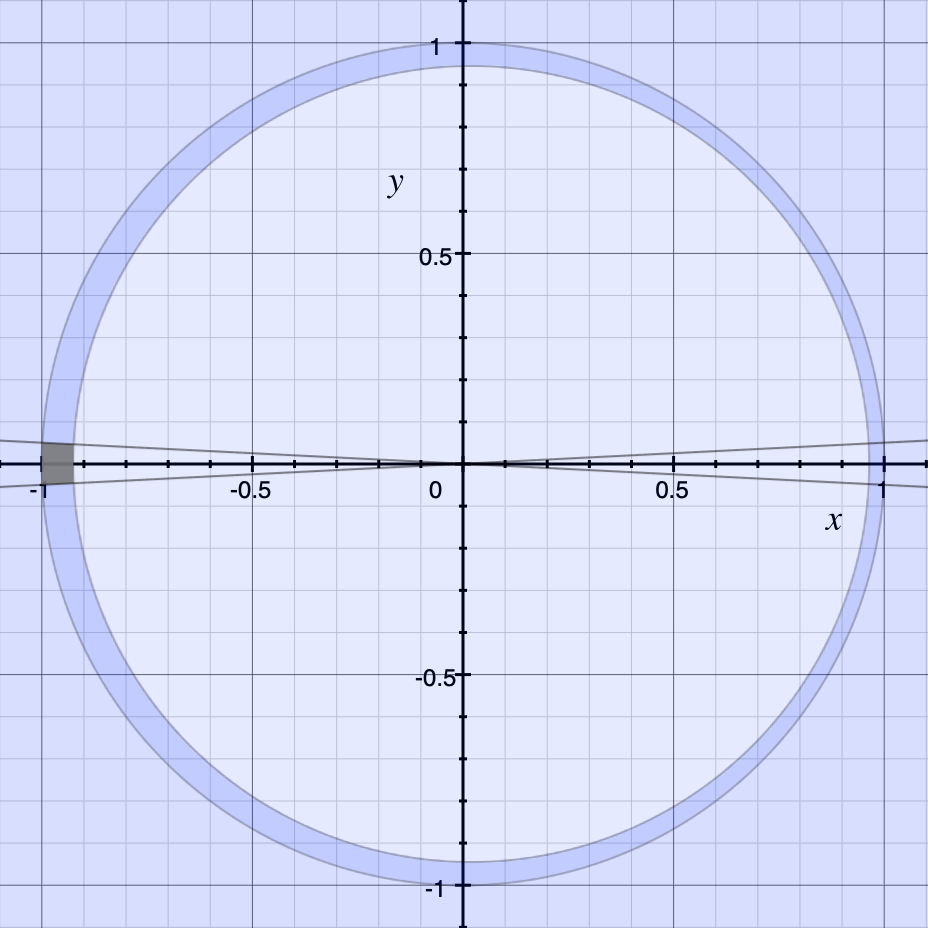}
\caption{The intersection of the unit disk and the region described in \eqref{condsonmu}, for $p = \frac32$ and $h=\frac{1}{10}$, along with a Carleson window.}
\label{carleson-small-p}
\end{figure}
\end{center}

\bigskip

Suppose that $2\leq p<\infty$, and $h>0$ is small.  Choose $c = 1/h$ and $a=(1 - h^{p-1})^{1/p}$.  

The region appearing in \eqref{samestuntlamb} can be described by
\[
         \Big|\frac{1}{a} - z\Big| \leq \frac{1}{ac},
\]
which is the intersection of $\mathbb{D}$ with the disk of radius $1/ac$ centered at $1/a$.  The width of the intersection region is equal to
\[
     \frac{1}{ac} - \Big(\frac{1}{a} - 1\Big) =  \frac{1 - (1-a)c}{ac}  \approx  \frac{h- h^{p-1}/p}{(1-h^{p-1})^{1/p}} \approx h.
\]
(It is in this step that $p\geq 2$ was needed.)

Thus the region contains a Carleson window
\[
     S_{\delta} := \{ re^{i\theta}:\ \ 1-\delta < r < 1;\ |\theta| < \delta/2  \},
\]
where $\delta \approx h$.  

 Figure \ref{carleson-big-p} below shows the Carleson window in this case.

\begin{center}
\begin{figure}[h!]
\includegraphics{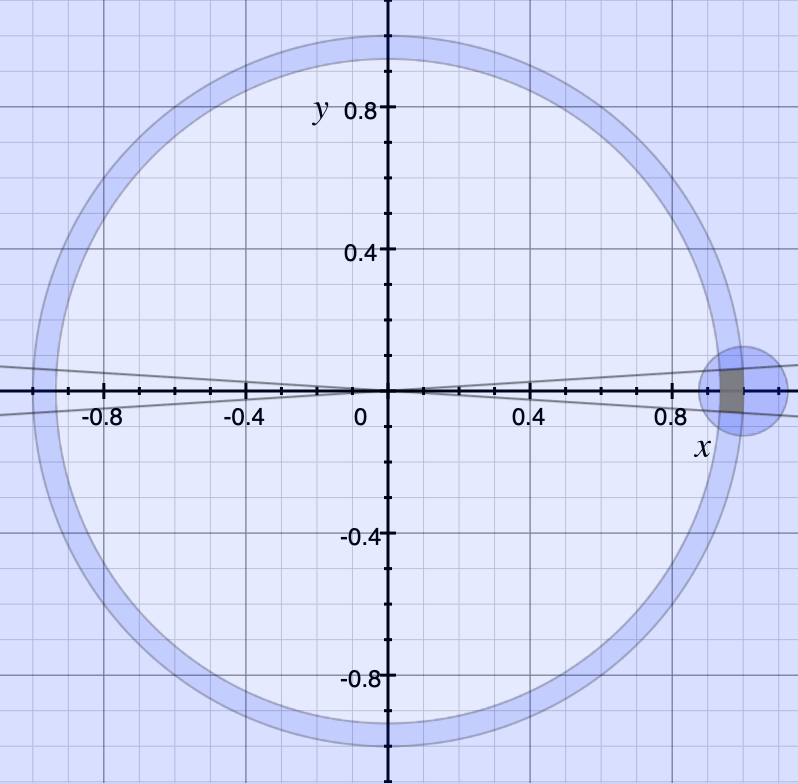}
\caption{The intersection of the unit disk and the region described in \eqref{samestuntlamb}, for $p = 4$ and $h=\frac{1}{8}$, along with a Carleson window (depicted with the heaviest shading).}
\label{carleson-big-p}
\end{figure}
\end{center}

Now \eqref{samestuntlamb} leads to
\begin{align*}
     \mu(S_{\delta})   &\approx     \mu\Big( \frac{1}{|1 - az|} \geq c   \Big) \\
        &\leq    \frac{C_2}{c^p} \Big( \frac{1}{1 - |a|^p}  \Big)  \\
        &=  \frac{C_2 h^p}{h^{p-1}} \\
        &=  C_2 h.
\end{align*}

The argument is rotationally symmetric, and so the following has been proved.  (Of course, the $p=2$ case was already known.)

\begin{Proposition}\label{cheeseboogie}
    Let $1 < p<\infty$.  Suppose that the measure $\mu$ on $\mathbb{D}$ satisfies
\begin{equation}
         \int_{\mathbb{D}} |f(z)|^p \,d\mu  \leq C  \|f\|_p^p
\end{equation}
for all $f \in \ell^p_A$.  Then $\mu$ is a Carleson measure.
\end{Proposition}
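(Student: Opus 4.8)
The plan is to convert the two necessary conditions supplied by the preceding Proposition into a Carleson-window estimate by means of an elementary planar geometry computation. Recall that testing the embedding hypothesis against the M\"obius map $\phi_a(z) = (a-z)/(1-\bar a z)$ produced the inequality \eqref{condsonmu}, while testing against the kernel $\Lambda_a$ produced \eqref{samestuntlamb}. In each of these, the level set appearing on the right-hand side is, up to passing to a complement, a Euclidean disk; the idea is to couple the parameters $a$ and $c$ to a small height $h > 0$ in such a way that this set contains a Carleson window $S_\delta$ whose width $\delta$ is comparable to $h$, while the quantity on the other side of the inequality is $O(h)$. Because both the hypothesis and the family of Carleson windows are invariant under rotations of $\mathbb{D}$, it suffices to treat windows anchored on the positive real axis.

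For $1 < p < 2$ I would fix small $h$, set $c = 1-h$ and $a = h^{1/p} c$, and examine the region $\{ z : |(a-z)/(1-\bar a z)| \ge c \}$ from \eqref{condsonmu}, which is the exterior of a disk lying inside $\mathbb{D}$. Its bounding circle meets the real axis at the two roots of a quadratic; computing those roots and retaining only first-order terms in $h$ — this is exactly where $p < 2$ enters, since then $h$ dominates $h^{2/p}$ — shows that the region encloses a Carleson window of width $\delta \approx h$. Substituting the chosen $a$ and $c$ into the right side of \eqref{condsonmu} and dividing by $c^p$ bounds it by a constant times $h$ for $h$ small, so $\mu(S_\delta) \le C h$. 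For $2 \le p < \infty$ I would instead take $c = 1/h$ and $a = (1 - h^{p-1})^{1/p}$ and use \eqref{samestuntlamb}, whose region $\{ z : 1/|1-az| \ge c \}$ is the intersection of $\mathbb{D}$ with the disk of radius $1/(ac)$ centered at $1/a$. The width of this lens is $1/(ac) - (1/a - 1)$, which expands to order $h$ (here $p \ge 2$ is needed so that $h^{p-1}$ is negligible against $h$), so it again contains a Carleson window $S_\delta$ with $\delta \approx h$; the right side of \eqref{samestuntlamb} equals $C h^p / h^{p-1} = C h$, which gives $\mu(S_\delta) \le C h$.

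Combining the two cases and invoking rotational symmetry yields $\mu(S) \le C h$ for every Carleson window $S$ of height $h$; that is, $\mu$ is a Carleson measure. The substance of the argument is bookkeeping rather than a new idea: one must push the two elementary geometry computations far enough to confirm that the enclosed window really has width of the same order as $h$, not a larger power of $h$, and that the decisive comparison — $h$ against $h^{2/p}$ when $p < 2$, and $h$ against $h^{p-1}$ when $p \ge 2$ — falls the right way on either side of the threshold $p = 2$. Once those leading-order asymptotics are established, the window estimates drop straight out of the two necessary conditions, and the proposition follows.
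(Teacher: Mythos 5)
Your proposal is correct and follows essentially the same route as the paper: the same test parameters ($c=1-h$, $a=h^{1/p}c$ for $1<p<2$ via \eqref{condsonmu}, and $c=1/h$, $a=(1-h^{p-1})^{1/p}$ for $p\ge 2$ via \eqref{samestuntlamb}), the same first-order geometric computation showing each level set contains a Carleson window of width $\delta\approx h$, and the same appeal to rotational symmetry. Nothing essential is missing.
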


Here's part of the converse.  

\begin{Proposition}
    Let $1<p\leq 2$.  If $\mu$ is a Carleson measure, then there exists $C>0$ such that
    \[
           \int_{\mathbb{D}} |f|^p\,d\mu  \leq C \|f\|_{\ell^p_A}^p
    \]
    for all $f \in \ell^p_A$.
\end{Proposition}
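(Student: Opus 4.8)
The plan is to deduce the statement from the classical ($p=2$) Carleson embedding theorem, exploiting the fact that the restriction $p \leq 2$ makes $\ell^p_A$ sit contractively inside $H^2$.

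First I would record the elementary inclusion: for $f(z) = \sum_{n \geq 0} a_n z^n \in \ell^p_A$ with $1 < p \leq 2$, the containment $\ell^p \subseteq \ell^2$ gives $\{a_n\} \in \ell^2$ with $\|\{a_n\}\|_{\ell^2} \leq \|\{a_n\}\|_{\ell^p}$, so that $f \in \ell^2_A = H^2$ and $\|f\|_{H^2} \leq \|f\|_p$. Next, since $\mu$ is a Carleson measure, the classical form of Carleson's embedding theorem (the $p=2$ instance of \eqref{altcarl}) furnishes a constant $C_0 > 0$ with
\[
\int_{\mathbb{D}} |g(z)|^2 \, d\mu(z) \leq C_0 \|g\|_{H^2}^2
\]
for every $g \in H^2$. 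Applying this to $g = f$ and combining with the inclusion yields $\int_{\mathbb{D}} |f|^2 \, d\mu \leq C_0 \|f\|_p^2$ for all $f \in \ell^p_A$.

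It then remains to pass from exponent $2$ down to exponent $p$, which is where the finiteness of $\mu$ enters. Since $0 < p \leq 2$ and $\mu(\mathbb{D}) < \infty$, Hölder's inequality with exponents $2/p$ and $2/(2-p)$ (the case $p=2$ being trivial) gives
\[
\int_{\mathbb{D}} |f|^p \, d\mu \leq \mu(\mathbb{D})^{1 - p/2} \left( \int_{\mathbb{D}} |f|^2 \, d\mu \right)^{p/2} \leq \mu(\mathbb{D})^{1 - p/2} C_0^{p/2} \|f\|_p^p,
\]
so the claim follows with $C = \mu(\mathbb{D})^{1-p/2} C_0^{p/2}$.

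I do not expect a serious obstacle here; the argument is short, and its only real content is recognizing that the hypothesis $p \leq 2$ is exactly what makes $\ell^p_A \hookrightarrow H^2$ contractive, so that the classical Carleson theory becomes available — this is also why the method collapses for $p > 2$, since the inclusion then reverses. The mild points of care are to invoke the version of Carleson's embedding for measures on the open disk $\mathbb{D}$ (not merely for measures on $\mathbb{T}$) and to use that a Carleson measure is finite by definition, which legitimizes the Hölder step. (One could equally route through Hausdorff–Young, embedding $\ell^p_A$ into $H^{q}$ with $q \geq 2$ the conjugate exponent and applying Carleson's theorem there, but the $H^2$ argument above is the most economical.)
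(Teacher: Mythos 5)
Your proof is correct, and it takes a slightly different (and arguably more economical) route than the paper's. Both arguments share the same skeleton: embed $\ell^p_A$ into a Hardy space $H^s$ with $s\geq p$, invoke Carleson's classical embedding theorem \eqref{altcarl} for that Hardy space, and then use H\"{o}lder together with $\mu(\mathbb{D})<\infty$ to drop the exponent back down to $p$. The difference is in the embedding: the paper takes $s=q$ (the conjugate exponent), using the Hausdorff--Young inequality $\|f\|_{H^q}\leq\|f\|_{\ell^p_A}$ and the $H^q$ form of Carleson's theorem, with H\"{o}lder applied at exponent $r=q/p$; you instead take $s=2$, using only the elementary inclusion $\ell^p\subseteq\ell^2$ (so $\|f\|_{H^2}\leq\|f\|_p$) and the $p=2$ instance of Carleson's theorem, with H\"{o}lder at exponents $2/p$ and $2/(2-p)$. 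Your version buys simplicity -- it replaces Hausdorff--Young by a trivial norm comparison and only needs the most classical case of the Carleson embedding -- while the paper's version keeps the conjugate exponent $q$ in view, consistent with the duality framework running through the rest of the paper; the constants differ but neither is sharper in any essential way. Your two points of care (the embedding theorem is for finite measures on the open disk, and Carleson measures are finite by the paper's definition, which legitimizes the H\"{o}lder step) are exactly right, and your closing remark about the alternative Hausdorff--Young route is in fact the paper's own proof.
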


\begin{proof}
First, by H\"{o}lder's inequality, we have
\[
       \int_{\mathbb{D}} F\,d\mu  \leq \Big(\int_{\mathbb{D}} F^r\, d\mu \Big)^{1/r} \mu(\mathbb{D})^{1/r'}
\]
for any nonnegative integrable $F$, with $1<r<\infty$ and $1/r + 1/r' = 1$.  Take $1<p<2$ and $r = q/p >1$, where $1/p + 1/q = 1$.  Then for $F = |f|^p$, we have
\[
       \Big( \int_{\mathbb{D}} |f|^p\,d\mu\Big)^{1/p}  \leq \Big(\int_{\mathbb{D}} |f|^q \, d\mu \Big)^{1/q} \mu(\mathbb{D})^{1/(r'p)}.
\]

Thus if $\mu$ is a Carleson measure, and $1<p<2$, then for some $C>0$
\begin{align*}
       \Big( \int_{\mathbb{D}} |f|^p\,d\mu\Big)^{1/p}  &\leq \Big(\int_{\mathbb{D}} |f|^q \, d\mu \Big)^{1/q} \mu(\mathbb{D})^{1/(r'p)} \\
          & \leq  C\mu(\mathbb{D})^{1/(r'p)} \|f\|_{H^q} \\
          & \leq  C\mu(\mathbb{D})^{1/(r'p)} \|f\|_{\ell^p_A}
\end{align*}
for all $f \in \ell^p_A$, with Hausdorff-Young being used in the last step. \end{proof}

\bigskip

The converse to Proposition \ref{cheeseboogie} fails  when $2<p<\infty$, as can be seen from the following example.

\begin{Proposition}
  If $2<p<\infty$, there exists a Carleson measure $\mu$ such that for no $C>0$ does
  \[
          \int_{\mathbb{D}} |f(z)|^p\,d\mu(z) \leq C \|f\|^p_p
\]
hold for all $f \in \ell^p_A$.
\end{Proposition}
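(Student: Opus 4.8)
The plan is to produce the counterexample by hand: a single Carleson measure carried by a rapidly accumulating sequence on the real axis, tested against the partial-sum polynomials $f_N(z)=1+z+\cdots+z^{N-1}$. Concretely, set $z_k:=1-2^{-k}$ for $k\geq 1$ and
\[
\mu:=\sum_{k\geq 1}(1-|z_k|^2)\,\delta_{z_k}.
\]
First I would verify that $\mu$ is a finite Carleson measure. Finiteness is clear since $1-|z_k|^2\asymp 2^{-k}$. For a Carleson window $S$ of width $h$, only indices with $2^{-k}\leq h$ can contribute, and only when the angular part of $S$ contains $0$; in that case $\mu(S)\leq\sum_{2^{-k}\leq h}2^{-k+1}\lesssim h$, while for every other window $\mu(S)=0$. (Equivalently, one may note that $\{z_k\}$ is uniformly separated and invoke the standard fact that uniformly separated sequences induce Carleson measures.)

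Next I would bring in the test functions $f_N(z)=\sum_{j=0}^{N-1}z^j=\dfrac{1-z^N}{1-z}$, for which $\|f_N\|_p^p=N$ exactly, and exploit the fact that a ``flat'' polynomial is enormous near the boundary compared with its $\ell^p_A$-norm. Matching $N$ to the point, I would evaluate at $z_k$ with $N=2^k$:
\[
f_{2^k}(z_k)=2^k\bigl(1-(1-2^{-k})^{2^k}\bigr)\asymp 2^k,
\]
using that $(1-2^{-k})^{2^k}$ is increasing in $k$ with limit $e^{-1}<1$, so the factor $1-(1-2^{-k})^{2^k}$ stays between $1-e^{-1}$ and $1$. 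Retaining only the $k$-th term of the sum defining $\int_{\mathbb{D}}|f_{2^k}|^p\,d\mu$ then gives
\[
\int_{\mathbb{D}}|f_{2^k}(z)|^p\,d\mu(z)\;\geq\;(1-|z_k|^2)\,|f_{2^k}(z_k)|^p\;\asymp\;2^{-k}\cdot 2^{kp}\;=\;2^{k(p-1)}.
\]

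Comparing with $\|f_{2^k}\|_p^p=2^k$, the ratio $\int_{\mathbb{D}}|f_{2^k}|^p\,d\mu\big/\|f_{2^k}\|_p^p$ is bounded below by a constant times $2^{k(p-2)}$, which diverges as $k\to\infty$ precisely because $p>2$. Hence no finite $C$ can make $\int_{\mathbb{D}}|f|^p\,d\mu\leq C\|f\|_p^p$ hold for all $f\in\ell^p_A$, and $\mu$ is the required measure. I do not expect a genuine obstacle here: the only mildly technical point is checking the Carleson-window bound for $\mu$ (routine geometry of windows meeting the positive real axis), and the driving mechanism — the point evaluation of $f_N$ at a point at distance $\asymp 1/N$ from the boundary has size $\asymp N$, whereas $\|f_N\|_p=N^{1/p}$, a gap that only opens once $p>2$ — is entirely robust.
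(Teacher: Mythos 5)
Your proposal is correct, and it reaches the conclusion by a somewhat different route than the paper. The paper also uses an atomic measure on the radius $[0,1)$, but with the denser points $r_n=1-1/n$ and weights $1/(n(n+1))$, and it defeats the embedding with a \emph{single} fixed function $f(z)=\sum_{k\ge 0} z^k/(k+1)^{\epsilon+1/p}\in\ell^p_A$, showing $\int_{\mathbb D}|f|^p\,d\mu=+\infty$ after an integral estimate of the partial coefficient sums and a choice of small $\epsilon$ (this is marginally stronger than unboundedness of the best constant, though only the unboundedness is needed for the statement). You instead take the canonical sequence measure $\sum_k(1-|z_k|^2)\delta_{z_k}$ on the dyadic points $z_k=1-2^{-k}$ and test against the family of partial sums $f_N(z)=1+z+\cdots+z^{N-1}$ with $N=2^k$; the computations $\|f_N\|_p^p=N$, $f_{2^k}(z_k)\geq(1-e^{-1})2^k$, and the retained single term $(1-|z_k|^2)|f_{2^k}(z_k)|^p\gtrsim 2^{k(p-1)}$ are all right, as is your window estimate $\mu(S)\leq\sum_{2^{-k}\leq h}2^{-k+1}\lesssim h$, so the ratio grows like $2^{k(p-2)}$ and no uniform $C$ exists for $p>2$. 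The driving mechanism is the same in both arguments --- a function with flat (or slowly decaying) Maclaurin coefficients has $|f|\asymp N$ at distance $1/N$ from $z=1$ while $\|f\|_p\asymp N^{1/p}$, a mismatch with the Carleson window bound that opens exactly when $p>2$ --- but your version trades the paper's one-function divergence and its $\epsilon$-bookkeeping for an elementary sequence of polynomial tests, which is arguably cleaner; the paper's version buys a concrete $f\in\ell^p_A$ that is not even $\mu$-integrable to the $p$th power.
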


\begin{proof}
For $n=1, 2, 3,\ldots$ let $r_n = 1 - 1/n$.  Then of course $\lim_{n\rightarrow\infty} r_n = 1$ monotonically, and 
\[
     \sum_{n=1}^{\infty} (r_{n+1} - r_n) = \sum_{n=1}^{\infty} \frac{1}{n(n+1)} = 1.
\]

Let $\mu$ be the measure defined on $\mathbb{D}$ consisting of point masses at the points $\{r_n\}$ with the respective weights
\[
      \mu(\{r_n\}) = \frac{1}{n(n+1)}.
\]

Then $\mu$ is a Carleson measure, since the amount of mass in a Carleson window of width $1 - r_m$ is at most
\[
     \sum_{n=m}^{\infty} (r_{n+1} - r_n) = 1 - r_m.
\]

Select $\epsilon>0$, and let $2<p<\infty$, and consider the function
\[
    f(z) = \sum_{k=0}^{\infty}  \frac{z^k}{(k+1)^{\epsilon + 1/p}}.
\]
Then $f \in \ell^p_A$.  On the other hand,
\begin{align*}
     \int_{\mathbb{D}} |f(z)|^p\,d\mu(z) &=  \sum_{n=1}^{\infty} \frac{1}{n(n+1)} \Bigg|\sum_{k=0}^{\infty} \frac{(1 - 1/n)^k}{(k+1)^{\epsilon + 1/p}}\Bigg|^p\\
     &\geq \sum_{n=1}^{\infty} \frac{1}{n(n+1)} \Bigg|\sum_{k=0}^{n} \frac{(1 - 1/n)^n}{(k+1)^{\epsilon + 1/p}}\Bigg|^p\\
     &\geq C\sum_{n=1}^{\infty} \frac{1}{n(n+1)} \Big|\frac{1}{e} (n+1)^{1 - \epsilon - 1/p}\Big|^p \\
     &\geq \frac{C}{e^p} \sum_{n=1}^{\infty} (n+1)^{p - 2 - p\epsilon - 1}, 
\end{align*}
where the constant $C$ comes from estimating the sum in $k$ using an integral.

Since $p>2$, we can choose $\epsilon$ sufficiently small that $p-2-\epsilon \geq 0$, so that the final expression diverges to $+\infty$.  This proves the claim.
\end{proof}

It remains, therefore, to fully characterize the norm inequality \eqref{carlcondf} when $2<p<\infty$.  Looking back to earlier sections, we would like to uncover any connection between weak separation and the condition (LB), and find other potential uses for the nonlinear operator extending the Gramian.  These matters are the subject of ongoing work.


\bibliographystyle{plain}

\bibliography{referencesInterpEllp}

\def\cprime{$'$} \def\cprime{$'$} \def\cprime{$'$}
\begin{thebibliography}{10}

\bibitem{AM}
Jim Agler and John~E. McCarthy.
\newblock {\em Pick Interpolation and Hilbert Function Spaces}, volume~44 of
  {\em Graduate Studies in Mathematics}.
\newblock American Mathematical Society, Providence, R.I., 2000.

\bibitem{MR1440934}
A.~Aleman, S.~Richter, and C.~Sundberg.
\newblock Beurling's theorem for the {B}ergman space.
\newblock {\em Acta Math.}, 177(2):275--310, 1996.

\bibitem{AMW}
J.~Alonso, H.~Martini, and S.~Wu.
\newblock On {B}irkhoff orthogonality and isosceles orthogonality in normed
  linear spaces.
\newblock {\em Aequationes Math.}, 83(1-2):153--189, 2012.

\bibitem{Dima}
Catherine B\'{e}n\'{e}teau, Matthew~C. Fleeman, Dmitry~S. Khavinson, Daniel
  Seco, and Alan~A. Sola.
\newblock Remarks on inner functions and optimal approximants.
\newblock {\em Canad. Math. Bull.}, 61(4):704--716, 2018.

\bibitem{Carl}
Lennart Carleson.
\newblock An interpolation problem for bounded analytic functions.
\newblock {\em Amer. J. Math.}, 80:921--930, 1958.

\bibitem{Carl2}
Lennart Carleson.
\newblock Interpolations by bounded analytic functions and the corona problem.
\newblock {\em Ann. of Math.}, 76:547--559, 1962.

\bibitem{CMPX}
R.~Cheng, A.~G. Miamee, and M.~Pourahmadi.
\newblock Regularity and minimality of infinite variance processes.
\newblock {\em J. Theor. Probab.}, 13:1115--1122, 2000.

\bibitem{CR}
R.~Cheng and W.~T. Ross.
\newblock Weak parallelogram laws on {B}anach spaces and applications to
  prediction.
\newblock {\em Period. Math. Hungar.}, 71(1):45--58, 2015.

\bibitem{MR3714456}
Raymond Cheng, Javad Mashreghi, and William~T. Ross.
\newblock Multipliers of sequence spaces.
\newblock {\em Concr. Oper.}, 4:76--108, 2017.

\bibitem{Chengetal1}
Raymond Cheng, Javad Mashreghi, and William~T. Ross.
\newblock Inner functions and zero sets for {$\ell^p_A$}.
\newblock {\em Trans. Amer. Math. Soc.}, 372(3):2045--2072, 2019.

\bibitem{CMR2}
Raymond Cheng, Javad Mashreghi, and William~T. Ross.
\newblock Inner functions in reproducing kernel spaces.
\newblock In {\em Analysis of operators on function spaces}, Trends Math.,
  pages 167--211. Birkh\"{a}user/Springer, Cham, 2019.

\bibitem{CMR}
Raymond Cheng, Javad Mashreghi, and William~T. Ross.
\newblock {\em Function theory and $\ell^p$ spaces}, volume~75 of {\em
  University Lecture Series}.
\newblock American Mathematical Society, Providence, R.I., 2020.

\bibitem{CMR3}
Raymond Cheng, Javad Mashreghi, and William~T. Ross.
\newblock Inner vectors for {T}oeplitz operators.
\newblock {\em Complex Analysis and Spectral Theory}, Special Volume in
  Celebration of Thomas Ransford's 60th Birthday(Khavinson and Mashreghi,
  Eds.), (in press).

\bibitem{MR1278431}
P.~Duren, D.~Khavinson, H.~S. Shapiro, and C.~Sundberg.
\newblock Invariant subspaces in {B}ergman spaces and the biharmonic equation.
\newblock {\em Michigan Math. J.}, 41(2):247--259, 1994.

\bibitem{Dur}
P.~L. Duren.
\newblock {\em Theory of {$H^{p}$} Spaces}.
\newblock Pure and Applied Mathematics, Vol. 38. Academic Press, New
  York-London, 1970.

\bibitem{MR1398090}
Peter Duren, Dmitry Khavinson, and Harold~S. Shapiro.
\newblock Extremal functions in invariant subspaces of {B}ergman spaces.
\newblock {\em Illinois J. Math.}, 40(2):202--210, 1996.

\bibitem{MR1197044}
Peter Duren, Dmitry Khavinson, Harold~S. Shapiro, and Carl Sundberg.
\newblock Contractive zero-divisors in {B}ergman spaces.
\newblock {\em Pacific J. Math.}, 157(1):37--56, 1993.

\bibitem{Jam}
R.~C. James.
\newblock Orthogonality and linear functionals in normed linear spaces.
\newblock {\em Trans. Amer. Math. Soc.}, 61:265--292, 1947.

\bibitem{MR936999}
Stefan Richter.
\newblock Invariant subspaces of the {D}irichlet shift.
\newblock {\em J. Reine Angew. Math.}, 386:205--220, 1988.

\bibitem{Seco}
D.~Seco.
\newblock A characterization of {D}irichlet-inner functions.
\newblock {\em Complex Anal. Oper. Theory}, 13(4):1653--1659, 2019.

\bibitem{Seip}
Kristian Seip.
\newblock {\em Interpolation and Sampling in Spaces of Analytic Functions},
  volume~33 of {\em University Lecture Series}.
\newblock American Mathematical Society, Providence, R.I., 2004.

\bibitem{SS}
H.~S. Shapiro and A.~L. Shields.
\newblock On some interpolation problems for analytic functions.
\newblock {\em Amer. J. Math.}, 83:513--532, 1961.

\bibitem{Vino3}
S.~A. Vinogradov.
\newblock On the interpolation and zeros of power series with a sequence of
  coefficients in $\ell^p$.
\newblock {\em Soviet Mathematics Doklady}, 6:57--60, 1965.

\bibitem{Vino4}
S.~A. Vinogradov.
\newblock The interpolation of power series whose sequence of coefficients is
  from $\ell^p$.
\newblock {\em Funkcional. Anal. i Prilo\v{z}en.}, 1:83--85, 1967.

\bibitem{VK1}
S.~A. Vinogradov and V.~P. Khavin.
\newblock Free interpolation in ${H}^{\infty}$ and in some other classes of
  functions. {I}.
\newblock {\em J. Soviet Math.}, 9:137--171, 1978.

\bibitem{VK2}
S.~A. Vinogradov and V.~P. Khavin.
\newblock Free interpolation in ${H}^{\infty}$ and in some other classes of
  functions. {II}.
\newblock {\em J. Soviet Math.}, 14:1027--1065, 1980.

\end{thebibliography}


\end{document}